\newtheorem{theorem}{Theorem}[section]
\theoremstyle{plain}
\newtheorem{corollary}[theorem]{Corollary}
\newtheorem{lemma}[theorem]{Lemma}
\newtheorem{proposition}[theorem]{Proposition}
\numberwithin{equation}{section}
\begin{document}
\title[A scalar field equation with zero mass at infinity]{Existence of a positive solution to a nonlinear scalar field equation with zero mass at infinity}
\author{M\'{o}nica Clapp}
\address{Instituto de Matem\'{a}ticas, Universidad Nacional Aut\'{o}noma de M\'{e}xico,
Circuito Exterior, Ciudad Universitaria, 04510 Coyoac\'{a}n, CDMX, Mexico.}
\email{monica.clapp@im.unam.mx}
\author{Liliane A. Maia}
\address{Departamento de Matem\'{a}tica, UNB, 70910-900 Bras\'{\i}lia, Brazil.}
\email{lilimaia@unb.br}
\thanks{M. Clapp was supported by CONACYT grant 237661 (Mexico) and UNAM-DGAPA-PAPIIT grant
IN104315 (Mexico). L. Maia was supported by CNPq/PQ 308173/2014-7 (Brazil) and
PROEX/CAPES (Brazil).}
\date{\today}

\begin{abstract}
We establish the existence of a positive solution to the problem
\[
-\Delta u+V(x)u=f(u),\qquad u\in D^{1,2}(\mathbb{R}^{N}),
\]
for $N\geq3$, when the nonlinearity $f$ is subcritical at infinity and
supercritical near the origin, and the potential $V$ vanishes at infinity. Our
result includes situations in which the problem does not have a ground state.
Then, under a suitable decay assumption on the potential, we show that the
problem has a positive bound state.

\textsc{Key words: } Scalar field equations; zero mass; superlinear;
double-power nonlinearity; positive solution; variational methods.

\textsc{MSC2010: }35Q55 (35B09, 35J20).\medskip\medskip\medskip

\end{abstract}
\maketitle

\section{Introduction}

\label{sec:introduction}

This paper is concerned with the existence of a positive solution to the
problem
\begin{equation}
\left\{
\begin{array}
[c]{l}%
-\Delta u+V(x)u=f(u),\\
u\in D^{1,2}(\mathbb{R}^{N}),
\end{array}
\right.  \tag{$\wp_V$}\label{prob}%
\end{equation}
for $N\geq3$, where the nonlinearity $f$ is subcritical at infinity and
supercritical near the origin, and the potential $V$ vanishes at infinity. Our
precise assumptions on $V$ and $f$ are stated below.

In their groundbreaking paper \cite{bl}, Berestycki and Lions considered the
case where $V\equiv\lambda$ is constant and $f$ has superlinear growth. They
showed that, if $f$ is subcritical, the problem (\ref{prob}) has a solution
for $\lambda>0$ and it does not have a solution for $\lambda<0$. They also
studied the limiting case $V\equiv0,$ which they called the \emph{zero mass}
case. They showed that, if $f$ is subcritical at infinity and supercritical
near the origin, the problem%
\begin{equation}
-\Delta u=f(u),\qquad u\in D^{1,2}(\mathbb{R}^{N}), \tag{$\wp_0$}\label{pinf}%
\end{equation}
has a ground state solution $\omega$, which is positive, radially symmetric
and decreasing in the radial direction.

The motivation for studying this type of equations came from some problems in
particle physics, related to the nonabelian gauge theory which underlies
strong interaction, called quantum chromodynamics or QCD. Their solutions give
rise to some special solutions of the pure Yang-Mills equations via 't Hooft's
Ansatz; see \cite{g}.

For a radial potential $V(\left\vert x\right\vert ),$ Badiale and Rolando
established the existence of a positive radial solution to the problem
(\ref{prob}) in \cite{br}. On the other hand, under suitable hypotheses, but
without assuming any symmetries on $V$, Benci, Grisanti and Micheletti showed
in \cite{bgm1}\ that the problem (\ref{prob}) has a positive least energy
solution if $V(x)\leq0$ for all $x\in\mathbb{R}^{N}$ and $V(x)<0$ on a set of
positive measure. They also showed that, if $V(x)\geq0$ for all $x\in
\mathbb{R}^{N}$ and $V(x)>0$ on a set of positive measure, this problem does
not have a ground state solution, i.e., the corresponding variational
functional does not attain its (least energy) mountain pass value. Other
related results may be found in \cite{ap,bgm2,bm,gm}.

The result that we present in this paper includes the existence of a positive
bound state for positive or sign changing potentials which decay to $0$ at
infinity with a suitable velocity. More precisely, we assume that $V$ and $f$
have the following properties:

\begin{enumerate}
\item[$\left(  V1\right)  $] $V\in L^{N/2}(\mathbb{R}^{N})\cap L^{r}%
(\mathbb{R}^{N})$ for some $r>N/2,$ and $\int_{\mathbb{R}^{N}}\left\vert
V^{-}\right\vert ^{N/2}<S^{N/2},$ where $V^{-}:=\min\{0,V\}$ and $S$ is the
best constant for the embedding $D^{1,2}(\mathbb{R}^{N})\hookrightarrow
L^{2^{\ast}}(\mathbb{R}^{N})$ with $2^{\ast}:=\frac{2N}{N-2}.$

\item[$\left(  V2\right)  $] There are constants $A_{0}>0$ and $\kappa
>\max\{2,N-2\}$ such that%
\[
V(x)\leq A_{0}(1+|x|)^{-\kappa}\quad\text{for all \ }x\in\mathbb{R}^{N}.
\]

\item[$\left(  f1\right)  $] $f\in\mathcal{C}^{1}[0,\infty),$ and there are
constants $A_{1}>0$ and $2<p<2^{\ast}<q$ such that, for $m=-1,0,1,$
\[
|f^{(m)}(s)|\leq\left\{
\begin{array}
[c]{cl}%
A_{1}\left\vert s\right\vert ^{p-(m+1)} & \text{if}\ \left\vert s\right\vert
\geq1,\\
A_{1}\left\vert s\right\vert ^{q-(m+1)} & \text{if}\ \left\vert s\right\vert
\leq1,
\end{array}
\right.
\]
where $f^{\left(  -1\right)  }:=F,$ $f^{(0)}:=f,$ $f^{(1)}:=f^{\prime},$ and
$F(s):=\int_{0}^{s}f(t)\mathrm{d}t.$

\item[$\left(  f2\right)  $] There is a constant $\theta>2$ such that
$0\leq\theta F(s)\leq f(s)s<f^{\prime}(s)s^{2}$ for all $s>0.$

\item[$\left(  f3\right)  $] The function $g(s):=\frac{sf^{\prime}(s)}{f(s)}$
is a decreasing function of $s>0$ and
\[
\lim_{s\rightarrow\infty}g(s)<2^{\ast}-1<\lim_{s\rightarrow0}g(s).
\]

\end{enumerate}

Our main result is the following one.

\begin{theorem}
\label{thm:main}Assume that $(V1)$-$(V2)$ and $(f1)$-$(f3)$ hold true. Then
the problem \emph{(\ref{prob})} has a positive solution.
\end{theorem}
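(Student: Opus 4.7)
The natural energy functional is
\[
J_V(u) := \tfrac{1}{2}\int_{\mathbb{R}^N}\bigl(|\nabla u|^2 + V u^2\bigr) - \int_{\mathbb{R}^N} F(u),
\]
which, by $(V1)$ and the Sobolev inequality, is well-defined and of class $C^1$ on $D^{1,2}(\mathbb{R}^N)$; the assumption $\int|V^-|^{N/2} < S^{N/2}$ makes the quadratic part a norm equivalent to $\|\nabla u\|_2^2$, and $(f1)$ together with the double-power structure gives $|F(u)| \lesssim |u|^{2^*}$. Combined with $(f1)$-$(f2)$, the functional has mountain pass geometry; let $c_V$ denote its mountain pass level, and let $c_\infty := J_0(\omega)>0$ be the ground state energy of the limit problem $\wp_0$, with $\omega$ the Berestycki-Lions ground state.

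The first technical step I would establish is a profile decomposition (Struwe-type splitting lemma) tailored to the zero-mass, double-power setting: any sequence $(u_n)\subset D^{1,2}$ with $J_V(u_n)\to c$ and $J_V'(u_n)\to 0$ satisfies, up to a subsequence,
\[
u_n = u_0 + \sum_{k=1}^{\ell} w_k(\cdot - y_n^k) + o(1)\quad\text{in}\ D^{1,2},
\]
where $u_0$ is a critical point of $J_V$, each $w_k\neq 0$ solves $\wp_0$, $|y_n^k|\to\infty$, $|y_n^k - y_n^j|\to\infty$ for $k\neq j$, and $c = J_V(u_0) + \sum_k J_0(w_k)$. The hypothesis $V\in L^{N/2}\cap L^r$ forces the Schr\"odinger term to be compact, so no potential-induced profiles appear; the supercritical exponent $q>2^*$ near $0$ rules out bubbles that contract, and the subcritical exponent $p<2^*$ at infinity rules out bubbles that dilate. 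Consequently $J_V$ satisfies $(PS)_c$ for every $c$ outside the discrete set $\{J_V(\text{critical point}) + m c_\infty : m\in\mathbb{N}_0\}$; in particular on all of $(0,c_\infty)\cup(c_\infty,2c_\infty)$ after one checks there are no nontrivial critical points below $c_\infty$.

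If $c_V<c_\infty$, the mountain pass theorem yields a nontrivial critical point, and passing to $|u|$ (after extending $f$ oddly, so that $J_V(|u|)=J_V(u)$) together with the strong maximum principle gives positivity. The delicate case $c_V\ge c_\infty$, which is generic under $(V2)$ by the Benci-Grisanti-Micheletti non-attainment phenomenon, requires a higher min-max. Introducing the Nehari manifold $\mathcal{N}_V$ and a barycenter map $\beta$, I would set
\[
c^* := \inf\bigl\{J_V(u): u\in\mathcal{N}_V,\ \beta(u)=0\bigr\},
\]
or, equivalently, a two-peak linking level. Using the algebraic decay $\omega(x)\sim|x|^{-(N-2)}$ together with $(V2)$, test functions of the form $\omega(\cdot-Re_1)+\omega(\cdot+Re_1)$ give
\[
J_V\bigl(\omega(\cdot-Re_1)+\omega(\cdot+Re_1)\bigr) < 2c_\infty \quad \text{for all $R$ sufficiently large,}
\]
because the potential contribution $\int V\omega(\cdot\pm Re_1)^2$ vanishes as $R\to\infty$ (this is where $\kappa>\max\{2,N-2\}$ is crucial) while the nonlinear interaction---stemming from strict super-linearity of $F$ via $(f2)$---produces a strictly negative correction. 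On the other hand, $c^*>c_\infty$ is topological: the constraint $\beta=0$ forbids the infimum from being realised by a single translate of $\omega$. The PS condition then yields a critical point at some level in $(c_\infty,2c_\infty)$, which is positive by the symmetrisation argument above.

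The principal obstacle is the strict upper bound $c^*<2c_\infty$: because $\omega$ decays only polynomially in the zero-mass regime, the interaction between well-separated peaks is not exponentially small, and the potential contribution must be shown to decay even faster. The condition $\kappa>\max\{2,N-2\}$ is tuned precisely for this---note that $\omega\notin L^2(\mathbb{R}^N)$ when $N\in\{3,4\}$, so the direct bound $\int V\omega^2\lesssim\|V\|_{N/2}\|\omega\|^2_{2^*}$ does not suffice and one must split the integral according to the distance from the peak, using the pointwise decay of both $V$ and $\omega$. Secondary difficulties are (i) ruling out small-scale bubbling in the splitting lemma, where the supercritical exponent $q>2^*$ in $(f1)$ is essential, and (ii) establishing uniqueness and non-degeneracy of $\omega$ up to translation, which under $(f3)$ follows classically from the monotonicity of $g(s)=sf'(s)/f(s)$ and is needed to make the decomposition of the energy in the profile lemma rigid.
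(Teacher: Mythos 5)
Your overall strategy coincides with the paper's: a splitting lemma adapted to the zero-mass setting, the observation that uniqueness of the positive solution of $(\wp_0)$ (via $(f3)$ and Erbe--Tang) leaves the interval $(c_0,2c_0)$ free of limit-problem energies so that Palais--Smale holds there, a two-peak test function whose interaction term of order $R^{-(N-2)}$ dominates the potential contribution (this is exactly where $\kappa>\max\{2,N-2\}$ and the polynomial decay of $\omega$ enter), and a barycenter-based topological argument. You have correctly identified the principal difficulty. There are, however, two genuine gaps. First, your critical level $c^{*}:=\inf\{J_V(u):u\in\mathcal{N}_V,\ \beta(u)=0\}$ is an infimum over a set that is \emph{not} invariant under the negative gradient flow, so even with $(PS)_c$ on $(c_0,2c_0)$ there is no reason for $c^{*}$ to be a critical value: a minimizing sequence on $\{\beta=0\}$ produced by Ekeland carries an uncontrolled Lagrange multiplier coming from the constraint $\beta=0$. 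The paper circumvents this by a genuine linking argument: it builds a map $\iota$ from a ball into $\mathcal{N}_V\cap I_V^{2c_0-\eta}$ whose boundary values lie in $I_V^{c_0+\delta}$ (this forces one to estimate the energy of the \emph{whole} two-parameter family $T^R_{\lambda,y}(\lambda\omega^R_0+(1-\lambda)\omega^R_y)$, not just the symmetric sum), and then uses Bonnet's deformation lemma on the $\mathcal{C}^1$ manifold $\mathcal{N}_V$ together with the barycenter to produce a retraction of the ball onto its boundary sphere, a contradiction. You would need to set up this (or an equivalent) min-max over maps, not over points.

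Second, your positivity argument fails at the higher level: for a critical point $u$ with $I_V(u)\in(c_0,2c_0)$ obtained by linking, $|u|$ is generally \emph{not} a critical point, so "passing to $|u|$ plus the maximum principle" does not apply. The correct argument is energy doubling: if $u$ changed sign, then $u^{+},u^{-}\in\mathcal{N}_V$ and $I_V(u)=I_V(u^{+})+I_V(u^{-})\geq 2c_V=2c_0$, contradicting $I_V(u)<2c_0$. Two further points worth flagging: (i) the splitting lemma in this setting cannot invoke the standard Lions vanishing lemma, since the sequences are bounded only in $D^{1,2}(\mathbb{R}^N)$ and not in $H^1(\mathbb{R}^N)$; one needs a modified vanishing lemma exploiting the double-power bound in $(f1)$ (truncating $u_k$ near zero to manufacture an $H^1$-bounded auxiliary sequence); (ii) non-degeneracy of $\omega$ is not needed anywhere --- uniqueness up to translation suffices for the rigidity you invoke.
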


It is easy to see that the model nonlinearity
\[
f(s):=\frac{s^{q-1}}{1+s^{q-p}}%
\]
satisfies the assumptions $(f1)$-$(f3).$

We point out that assumptions $(V1)$, $(f1)$ and $(f2)$ are quite natural and
have been also considered in previous works, in particular, in \cite{bgm1}.
Assumption $(f3)$ guarantees that the limit problem (\ref{pinf}) has a unique
positive solution. This fact, together with some fine estimates, which involve
assumption $(V2)$, allows us to show the existence of a positive bound state
for the problem (\ref{prob}) when the ground state is not attained.

The positive mass case, in which the potential $V$ tends to a positive
constant at infinity, has been widely investigated. A brief account may be
found in \cite{cm}, where a result, similar to Theorem \ref{thm:main},\ was
obtained for subcritical nonlinearities. On the other hand, except for the
case of the critical pure power nonlinearity, only few results are known for
the zero mass case.

There are several delicate issues in dealing with the zero mass case. Already
the variational formulation requires some care, because the energy space
$D^{1,2}(\mathbb{R}^{N})$ is only embedded in $L^{2^{\ast}}(\mathbb{R}^{N}).$
The growth assumptions $(f1)$ on the nonlinearity, however, provide the basic
interpolation and boundedness conditions that allow to establish the
differentiability of the variational functional and to study its compactness
properties. Benci and Fortunato, in \cite{bf}, expressed these conditions in
the framework of Orlicz spaces, which was also used and further developed in
\cite{ap,br,bgm1,bgm2,bm,bpr,gm}. The crucial facts, for our purposes, are
stated in Proposition \ref{prop:bpr}\ below.

Another sensitive issue is the lack of compactness. In the positive mass case,
a fundamental tool for dealing with it, is Lions' vanishing lemma, whose proof
relies deeply on the fact that the sequences involved are bounded in
$H^{1}(\mathbb{R}^{N}).$ Once again, assumption $(f1)$ allowed us to obtain a
suitable version of this result for sequences which are only bounded in
$D^{1,2}(\mathbb{R}^{N})$ (see Lemma \ref{lem:lions}). This new version of
Lions' vanishing lemma plays a crucial role in the proof of the splitting
lemma (Lemma \ref{lem:split}) which describes the lack of compactness of the
variational functional. When the ground state is not attained, due to the
uniqueness of the positive solution to limit problem (\ref{pinf}), the
splitting lemma provides an open interval of values at which the energy
functional satisfies the Palais-Smale condition.

We give a topological argument to establish the existence of a critical value
in this interval. This argument requires, on the one hand, some fine estimates
which are based on the precise asymptotic decay for the solutions of the limit
problem (\ref{pinf}), obtained recently by V\'{e}tois in \cite{v}, and on a
suitable deformation lemma for $\mathcal{C}^{1}$-manifolds that was proved by
Bonnet in \cite{b}.

This paper is organized as follows: In Section \ref{sec:limprob} we collect
the information that we need about the solutions of the limit problem. Section
\ref{sec:variational setting} is devoted to the study of the variational
problem and, specially, of the compactness properties of the variational
functional. In Section \ref{sec:existence}\ we derive the estimates that we
need, and we prove our main result.

\section{The limit problem}

\label{sec:limprob}We define $f(u):=-f(-u)$ for $u<0.$ Then, $f\in
\mathcal{C}^{1}(\mathbb{R})$ and it is an odd function. Note that, if $u$ is a
positive solution of the problem (\ref{prob}) for this new function, it is
also a solution of (\ref{prob}) for the original function $f.$ Hereafter, $f$
will denote this extension.

We consider the Hilbert space $D^{1,2}(\mathbb{R}^{N}):=\{u\in L^{2^{\ast}%
}(\mathbb{R}^{N}):\nabla u\in L^{2}(\mathbb{R}^{N},\mathbb{R}^{N})\}$ with its
standard scalar product and norm%
\[
\left\langle u,v\right\rangle :=\int_{\mathbb{R}^{N}}\nabla u\cdot\nabla
v,\text{\qquad}\left\Vert u\right\Vert :=\left(  \int_{\mathbb{R}^{N}%
}\left\vert \nabla u\right\vert ^{2}\right)  ^{1/2}.
\]
In this section we collect the information that we need on the positive
solutions to the limit problem (\ref{pinf}).

Since $f\in\mathcal{C}^{1}(\mathbb{R})$ and $f$ satisfies $(f1)$, a classical
result of Berestycki and Lions establishes the existence of a ground state
solution $\omega\in\mathcal{C}^{2}(\mathbb{R}^{N})$ to the problem
(\ref{pinf}), which is positive, radially symmetric and decreasing in the
radial direction; see Theorem 4 in \cite{bl}.

Observe that assumption $(f1)$ implies that $\left\vert f(s)\right\vert \leq
A_{1}\left\vert s\right\vert ^{2^{\ast}-1}$ and $\left\vert f^{\prime
}(s)\right\vert \leq A_{1}\left\vert s\right\vert ^{2^{\ast}-2}.$ Note also
that assumption $(f2)$ yields that $f(s)>0$ if $s>0.$ Therefore, a recent
result of V\'{e}tois implies that every positive solution $u$ to (\ref{pinf})
satisfies the decay estimates
\begin{equation}%
\begin{array}
[c]{c}%
A_{2}(1+\left\vert x\right\vert )^{-(N-2)}\leq\left\vert u(x)\right\vert \leq
A_{3}(1+\left\vert x\right\vert )^{-(N-2)},\\
\left\vert \nabla u(x)\right\vert \leq A_{3}(1+\left\vert x\right\vert
)^{-(N-1)},
\end{array}
\label{decay}%
\end{equation}
for some positive constants $A_{2}$ and $A_{3}$ and, moreover, $u$ is radially
symmetric and strictly radially decreasing about some point $x_{0}%
\in\mathbb{R}^{N}$; see Theorem 1.1 and Corollary 1.2 in \cite{v}.

Concerning uniqueness, Erbe and Tang showed that, if $f$ also satisfies
$(f3),$ then the problem (\ref{pinf}) has a unique fast decaying radial
solution, up to translations, where fast decaying means that $u$ is positive
and there is a constant $c\in(0,\infty)$ such that $\lim_{\left\vert
x\right\vert \rightarrow\infty}\left\vert x\right\vert ^{N-2}u(\left\vert
x\right\vert )=c;$ see Theorem 2 in \cite{et}\ and the remark in the paragraph
following it.

We summarize these results in the following statement.

\begin{proposition}
\label{prop:limit_problem}Under the assumptions $(f1)$-$(f3),$ the limit
problem \emph{(}\ref{pinf}\emph{)} has a unique positive solution $\omega$, up
to translations. Moreover, $\omega\in\mathcal{C}^{2}(\mathbb{R}^{N}),$ it is
radially symmetric and strictly decreasing in the radial direction, and it
satisfies the decay estimates \emph{(\ref{decay}).}
\end{proposition}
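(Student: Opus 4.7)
The plan is to assemble the conclusion by invoking three existing results in succession, after checking that our assumptions $(f1)$--$(f3)$ imply their respective hypotheses on the extended odd nonlinearity $f\in\mathcal{C}^1(\mathbb{R})$.

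First I would establish existence together with the qualitative properties of a ground state. Since $f$ is odd and $\mathcal{C}^1$, and since $(f2)$ forces $f(s)>0$ for $s>0$, the hypotheses of the Berestycki--Lions theorem (Theorem 4 in \cite{bl}) hold; this yields a ground state $\omega\in\mathcal{C}^2(\mathbb{R}^N)$ of \eqref{pinf} that is positive, radially symmetric and strictly decreasing in the radial direction.

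Second, I would derive the decay estimates \eqref{decay} by applying V\'etois. The crucial input his results require is a pointwise bound of the form $|f(s)|\leq C|s|^{2^\ast-1}$ and the analogous bound for $f'$. From $(f1)$, if $|s|\geq 1$ then $|f(s)|\leq A_1|s|^{p-1}\leq A_1|s|^{2^\ast-1}$ because $p<2^\ast$, while if $|s|\leq 1$ then $|f(s)|\leq A_1|s|^{q-1}\leq A_1|s|^{2^\ast-1}$ because $q>2^\ast$; the same interpolation gives $|f'(s)|\leq A_1|s|^{2^\ast-2}$. Together with $f(s)>0$ for $s>0$, this lets me invoke Theorem 1.1 and Corollary 1.2 in \cite{v}, applied to \emph{every} positive solution $u$ of \eqref{pinf}. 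The outputs are both the bilateral estimate \eqref{decay} and the fact that $u$ is radially symmetric and strictly radially decreasing about some point $x_0\in\mathbb{R}^N$.

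Third, I would conclude uniqueness up to translation from Erbe and Tang. Hypothesis $(f3)$ is exactly the monotonicity condition on $g(s)=sf'(s)/f(s)$ required by their Theorem 2, so their result provides uniqueness of the \emph{fast decaying} positive radial solution, in the sense that $\lim_{r\to\infty}r^{N-2}u(r)\in(0,\infty)$. To apply this, I need to upgrade the V\'etois decay \eqref{decay} from two-sided bounds of order $(1+|x|)^{-(N-2)}$ to the existence of this limit. The main obstacle of the argument is exactly this step; however, once \eqref{decay} and the $(f1)$ bound $|f(s)|\leq A_1|s|^{q-1}$ near $0$ are in hand, one has $f(u(x))=O(|x|^{-(N-2)(q-1)})$ with $(N-2)(q-1)>N$ since $q>2^\ast$, so $f(u)$ decays fast enough to integrate $|x|^{N-1}f(u)$ against the radial Green's function and extract the limit by a standard ODE asymptotic argument for $-\Delta u=f(u)$; this is the content of the remark following Theorem 2 in \cite{et}. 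Combining the three results, any positive solution of \eqref{pinf}, once recentered at its point of radial symmetry, coincides with $\omega$, which proves the proposition.
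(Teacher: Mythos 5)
Your proposal assembles the proposition from exactly the same three ingredients the paper uses: Berestycki--Lions (Theorem 4 in \cite{bl}) for existence and regularity of the ground state, V\'etois (Theorem 1.1 and Corollary 1.2 in \cite{v}) for the decay estimates and radial symmetry of every positive solution, and Erbe--Tang (Theorem 2 in \cite{et} and the remark following it) for uniqueness of the fast decaying solution, with the same verification that $(f1)$ yields $|f(s)|\leq A_1|s|^{2^\ast-1}$ and $|f'(s)|\leq A_1|s|^{2^\ast-2}$. Your extra remark on upgrading the two-sided bound to the existence of $\lim_{|x|\to\infty}|x|^{N-2}u$ is a welcome clarification of a step the paper leaves to the cited remark in \cite{et}, but the argument is otherwise identical.
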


\section{The variational setting}

\label{sec:variational setting}

For $u,v\in D^{1,2}(\mathbb{R}^{N})$ we set%
\begin{equation}
\left\langle u,v\right\rangle _{V}:=\int_{\mathbb{R}^{N}}\nabla u\cdot\nabla
v+V(x)uv,\text{\qquad}\left\Vert u\right\Vert _{V}^{2}:=\int_{\mathbb{R}^{N}%
}\left(  |\nabla u|^{2}+V(x)u^{2}\right)  . \label{spV}%
\end{equation}
By assumption $(V1),$ these expresions are well defined and, using the Sobolev
inequality, we conclude that $\left\Vert \cdot\right\Vert _{V}$ is a norm in
$D^{1,2}(\mathbb{R}^{N})$ which is equivalent to the standard one.

Let $2<p<2^{\ast}<q$. The following proposition, combined with assumption
$(f1),$ provides the interpolation and boundedness properties that are needed
to obtain a good variational problem.

\begin{proposition}
\label{prop:bpr}Let $\alpha,\beta>0$ and $h\in\mathcal{C}^{0}(\mathbb{R})$.
Assume that $\frac{\alpha}{\beta}\leq\frac{p}{q},$ $\beta\leq q,$ and there
exists $M>0$ such that
\[
\left\vert h(s)\right\vert \leq M\min\{\left\vert s\right\vert ^{\alpha
},\left\vert s\right\vert ^{\beta}\}\text{\qquad for every }s\in
\mathbb{R}\text{.}%
\]
Then, for every $t\in\left[  \frac{q}{\beta},\frac{p}{\alpha}\right]  ,$ the
map $D^{1,2}(\mathbb{R}^{N})\rightarrow L^{t}(\mathbb{R}^{N})$ given by
$u\mapsto h(u)$ is well defined, continuous and bounded.
\end{proposition}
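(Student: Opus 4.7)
The plan is to exploit the Sobolev embedding $D^{1,2}(\mathbb{R}^{N})\hookrightarrow L^{2^{\ast}}(\mathbb{R}^{N})$ and reduce everything to a pointwise comparison of $|h(u)|^{t}$ with $|u|^{2^{\ast}}$. First I would note that the hypothesis $\alpha/\beta\leq p/q<1$ forces $\alpha<\beta$, so the two-sided growth bound on $h$ unpacks as $|h(s)|\leq M|s|^{\alpha}$ when $|s|\geq 1$ and $|h(s)|\leq M|s|^{\beta}$ when $|s|<1$. The crucial observation is that, for any $t\in[q/\beta,p/\alpha]$, one has $\alpha t\leq p<2^{\ast}$ and $\beta t\geq q>2^{\ast}$, which is precisely what allows domination by $|u|^{2^{\ast}}$ on each region.

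To prove well-definedness and boundedness, I would fix $u\in D^{1,2}(\mathbb{R}^{N})$, split $\mathbb{R}^{N}=A_{u}\cup B_{u}$ with $A_{u}:=\{|u|\geq 1\}$ and $B_{u}:=\{|u|<1\}$, and estimate on the first piece
\[
\int_{A_{u}}|h(u)|^{t}\leq M^{t}\int_{A_{u}}|u|^{\alpha t}\leq M^{t}\int_{A_{u}}|u|^{2^{\ast}},
\]
using $\alpha t\leq 2^{\ast}$ together with $|u|\geq 1$ on $A_{u}$. Symmetrically, on $B_{u}$ the inequalities $\beta t\geq 2^{\ast}$ and $|u|<1$ give $|h(u)|^{t}\leq M^{t}|u|^{2^{\ast}}$. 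Adding the two pieces and invoking the Sobolev embedding produces $\|h(u)\|_{L^{t}}^{t}\leq 2M^{t}C\|u\|^{2^{\ast}}$, which simultaneously shows that $h(u)\in L^{t}(\mathbb{R}^{N})$ and that the induced map is bounded on norm-bounded subsets.

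For continuity, I would run the classical subsequence argument for Nemytskii operators. If $u_{n}\to u$ in $D^{1,2}(\mathbb{R}^{N})$, the Sobolev embedding gives $u_{n}\to u$ in $L^{2^{\ast}}(\mathbb{R}^{N})$, and from any subsequence I could extract a further one converging to $u$ almost everywhere and dominated by a fixed $g\in L^{2^{\ast}}(\mathbb{R}^{N})$. Continuity of $h$ yields $h(u_{n})\to h(u)$ pointwise a.e., while the same pointwise bound as above provides $|h(u_{n})|^{t}\leq M^{t}|g|^{2^{\ast}}\in L^{1}(\mathbb{R}^{N})$; dominated convergence then delivers $L^{t}$ convergence along that subsequence, and the subsequence principle upgrades this to convergence of the full sequence. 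I anticipate no substantive obstacle here: the content of the proposition lies entirely in the bookkeeping of exponents, and in particular in the observation that the interval $[q/\beta,p/\alpha]$ is nonempty exactly because of the hypothesis $\alpha/\beta\leq p/q$, while $\beta\leq q$ ensures $t\geq 1$ so that $L^{t}$ is a genuine Banach space.
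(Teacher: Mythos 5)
Your argument is correct, and it is genuinely more self-contained than the paper's. The paper disposes of the proposition in two lines: the decomposition of $u$ at the level set $\{|u|>1\}$ embeds $L^{2^{\ast}}(\mathbb{R}^{N})$, hence $D^{1,2}(\mathbb{R}^{N})$, into the Orlicz-type space $L^{p}(\mathbb{R}^{N})+L^{q}(\mathbb{R}^{N})$, and the statement is then quoted as a special case of Proposition 3.5 in the Badiale--Pisani--Rolando paper on sums of Lebesgue spaces. You instead reprove the needed special case from scratch, and the key step is the clean pointwise inequality $|h(s)|^{t}\leq M^{t}\min\{|s|^{\alpha t},|s|^{\beta t}\}\leq M^{t}|s|^{2^{\ast}}$, valid for every $s$ because $\alpha t\leq p<2^{\ast}$ handles $|s|\geq1$ and $\beta t\geq q>2^{\ast}$ handles $|s|\leq1$; combined with the Sobolev embedding this gives well-definedness and boundedness at once, and the standard a.e.-convergent dominated subsequence argument gives continuity of the Nemytskii map. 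Your exponent bookkeeping is all verified correctly (nonemptiness of $[q/\beta,p/\alpha]$ from $\alpha/\beta\leq p/q$, and $t\geq1$ from $\beta\leq q$). What the paper's route buys is generality and reusability -- the $L^{p}+L^{q}$ framework delivers the full family of such mapping properties, including those used elsewhere in Lemma \ref{lem:bl} with various exponents $\nu$ -- whereas your route buys transparency: a reader sees exactly why the double-power bound interacts with the single embedding $D^{1,2}\hookrightarrow L^{2^{\ast}}$ without consulting an external reference.
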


\begin{proof}
The decomposition $u=u1_{\Omega_{u}}+u1_{\mathbb{R}^{N}\smallsetminus
\Omega_{u}},$ where $\Omega_{u}:=\{x\in\mathbb{R}^{N}:\left\vert
u(x)\right\vert >1\},$ gives a continuous embedding of $L^{2^{\ast}%
}(\mathbb{R}^{N})$ and, hence, of $D^{1,2}(\mathbb{R}^{N}),$ into the Orlicz
space%
\[
L^{p}(\mathbb{R}^{N})+L^{q}(\mathbb{R}^{N}):=\{u:u=u_{1}+u_{2}\text{ with
}u_{1}\in L^{p}(\mathbb{R}^{N}),\text{ }u_{2}\in L^{q}(\mathbb{R}^{N})\},
\]
whose norm is defined by%
\[
\left\vert u\right\vert _{p,q}:=\inf\{\left\vert u_{1}\right\vert
_{p}+\left\vert u_{2}\right\vert _{q}:u=u_{1}+u_{2},\text{ }u_{1}\in
L^{p}(\mathbb{R}^{N}),\text{ }u_{2}\in L^{q}(\mathbb{R}^{N})\}.
\]
Therefore, our claim is a special case of Proposition 3.5 in \cite{bpr}.
\end{proof}

Let $F(u):=\int_{0}^{u}f(s)\,\mathrm{d}s\text{.}$ Assumption $(f1)$ implies
that $\left\vert F(s)\right\vert \leq A_{1}\left\vert s\right\vert ^{2^{\ast}%
}$ and $\left\vert f(s)\right\vert \leq A_{1}\left\vert s\right\vert
^{2^{\ast}-1}.$ Therefore, the functionals $\Phi$, $\Psi:D^{1,2}%
(\mathbb{R}^{N})\rightarrow\mathbb{R}$ given by
\[
\Phi(u):=\int_{\mathbb{R}^{N}}F(u),\qquad\Psi(u):=\int_{\mathbb{R}^{N}}f(u)u
\]
are well defined. Using Proposition \ref{prop:bpr} it is easy to show that
$\Phi$ is of class $\mathcal{C}^{2}$ and $\Psi$ is of class $\mathcal{C}^{1}$;
see Lemma 2.6 in \cite{bm} or Proposition 3.8 in \cite{bpr}. Hence, the
functional $I_{V}:D^{1,2}(\mathbb{R}^{N})\rightarrow\mathbb{R}$ given by
\[
I_{V}(u):=\frac{1}{2}\int_{\mathbb{R}^{N}}\left(  |\nabla u|^{2}%
+V(x)u^{2}\right)  -\int_{\mathbb{R}^{N}}F(u),
\]
is of class $\mathcal{C}^{2},$ with derivative
\[
I_{V}^{\prime}(u)v=\int_{\mathbb{R}^{N}}\left(  \nabla u\cdot\nabla
v+V(x)uv\right)  -\int_{\mathbb{R}^{N}}f(u)v,\qquad u,v\in D^{1,2}%
(\mathbb{R}^{N}),
\]
and the functional $J_{V}:D^{1,2}(\mathbb{R}^{N})\rightarrow\mathbb{R}$
defined by%
\[
J_{V}(u):=I_{V}^{\prime}(u)u=\int_{\mathbb{R}^{N}}\left(  |\nabla
u|^{2}+V(x)u^{2}\right)  -\int_{\mathbb{R}^{N}}f(u)u,
\]
is of class $\mathcal{C}^{1}.$

The solutions to the problem (\ref{prob}) are the critical points of the
functional $I_{V}$. The nontrivial ones lie on the set%
\[
\mathcal{N}_{V}:=\{u\in D^{1,2}(\mathbb{R}^{N}):u\neq0,\text{ \ }%
J_{V}(u)=0\}.
\]
We define%
\[
c_{V}:=\inf_{u\in\mathcal{N}_{V}}I_{V}(u),
\]
and we write $I_{0}$, $J_{0}$, $\mathcal{N}_{0}$ and $c_{0}$ for the previous
expressions with $V=0$.

The proofs of the next two lemmas use well known arguments. We include them
for the sake of completeness. Hereafter $C$ will denote a positive constant,
not necessarily the same one.

\begin{lemma}
\label{lem:nehari}

\begin{enumerate}
\item[(a)] There exists $\varrho>0$ such that $\Vert u\Vert_{V}\geq\varrho$
for every $u\in\mathcal{N}_{V}.$

\item[(b)] $\mathcal{N}_{V}$ is a closed $\mathcal{C}^{1}$-submanifold of
$D^{1,2}(\mathbb{R}^{N})$ and a natural constraint for the functional $I_{V}$.

\item[(c)] $c_{V}>0$.

\item[(d)] If $u\in\mathcal{N}_{V}$, the function $t\mapsto I_{V}(tu)$ is
strictly increasing in $[0,1)$ and strictly decreasing in $(1,\infty).$ In
particular,
\[
I_{V}(u)=\max_{t>0}I_{V}(tu).
\]

\end{enumerate}
\end{lemma}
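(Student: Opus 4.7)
All four parts are classical Nehari-manifold statements adapted to the $D^{1,2}(\mathbb{R}^N)$ setting. The common preliminaries I would invoke are the equivalence of $\|\cdot\|_V$ with $\|\cdot\|$ (from $(V1)$) and the global bound $|f(s)s|\le A_1|s|^{2^*}$, $|F(s)|\le \tfrac{A_1}{2^*}|s|^{2^*}$, which follows from $(f1)$ together with $p<2^*<q$ by splitting into $\{|s|\le 1\}$ and $\{|s|\ge 1\}$. Combined with Sobolev, these give $\int f(u)u$ and $\int F(u)$ bounded by $C\|u\|_V^{2^*}$ for every $u\in D^{1,2}(\mathbb{R}^N)$.

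For \textbf{(a)}, I plug the Nehari identity $\|u\|_V^2=\int f(u)u$ into the estimate above to obtain $\|u\|_V^2\le C\|u\|_V^{2^*}$; since $2^*>2$, this forces $\|u\|_V\ge \varrho := C^{-1/(2^*-2)}$. For \textbf{(c)}, $(f2)$ yields $F(u)\le\tfrac{1}{\theta}f(u)u$, hence on $\mathcal{N}_V$ we have $I_V(u)=I_V(u)-\tfrac{1}{\theta}J_V(u)\ge\bigl(\tfrac{1}{2}-\tfrac{1}{\theta}\bigr)\|u\|_V^2\ge\bigl(\tfrac{1}{2}-\tfrac{1}{\theta}\bigr)\varrho^2>0$, which gives the positive lower bound on $c_V$.

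For \textbf{(b)}, a direct differentiation gives $J_V'(u)u=2\|u\|_V^2-\int[f'(u)u^2+f(u)u]$, so on $\mathcal{N}_V$ this reduces to $\int[f(u)u-f'(u)u^2]$, which is strictly negative by $(f2)$ applied to the odd extension of $f$. Consequently $J_V'(u)\neq 0$ everywhere on $\mathcal{N}_V$, so $\mathcal{N}_V=J_V^{-1}(0)\setminus\overline{B_{\varrho}(0)}$ is a $\mathcal{C}^1$-submanifold, closed by part (a). The natural-constraint property then follows from Lagrange multipliers: if $u\in\mathcal{N}_V$ is a critical point of $I_V|_{\mathcal{N}_V}$, then $I_V'(u)=\lambda J_V'(u)$, and evaluating at $u$ gives $0=J_V(u)=\lambda J_V'(u)u$, forcing $\lambda=0$.

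For \textbf{(d)}, the key input is that $(f2)$ makes $s\mapsto f(s)/s$ strictly increasing on $(0,\infty)$, since its derivative is $(f'(s)s-f(s))/s^2>0$. Because $f$ is odd, $s\mapsto f(s)/s$ is even, so at each $x$ with $u(x)\neq 0$ the map $t\mapsto f(tu(x))/(tu(x))$ is strictly increasing in $t>0$. Writing $\varphi(t):=I_V(tu)$, I compute $\varphi'(t)/t=\|u\|_V^2-\int[f(tu)/(tu)]u^2$, which is therefore strictly decreasing in $t>0$. Since $\varphi'(1)=J_V(u)=0$, this sign analysis gives $\varphi'>0$ on $(0,1)$ and $\varphi'<0$ on $(1,\infty)$, hence the unique maximum at $t=1$. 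The only point of care throughout — the \emph{main (mild) obstacle} — is to handle the odd extension of $f$ from Section 2 so that the strict inequalities of $(f2)$, which are stated only for $s>0$, still deliver the negativity of $f(u)u-f'(u)u^2$ and the monotonicity of $f(s)/s$ at every point where $u$ is nonzero, including where $u<0$.
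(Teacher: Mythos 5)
Your proposal is correct and follows essentially the same route as the paper: the Sobolev bound $|f(s)s|\le A_1|s|^{2^\ast}$ for (a), the computation $J_V'(u)u=\int[f(u)-f'(u)u]u<0$ via $(f2)$ (with the odd extension) for (b), the $I_V-\tfrac{1}{\theta}J_V$ estimate for (c), and the strict monotonicity of $s\mapsto f(s)/s$ for (d); your explicit Lagrange-multiplier step is just the standard argument the paper leaves implicit. The only cosmetic slip is writing $\mathcal{N}_V=J_V^{-1}(0)\setminus\overline{B_\varrho(0)}$ with your exact $\varrho$ (equality $\Vert u\Vert_V=\varrho$ is not excluded), but this does not affect closedness or the submanifold conclusion.
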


\begin{proof}
(a): \ Assumption $(f1)$ implies that $\left\vert f(s)s\right\vert \leq
A_{1}\left\vert s\right\vert ^{2^{\ast}}.$ So, using Sobolev's inequality, we
get
\[
J_{V}(u)\geq\Vert u\Vert_{V}^{2}-C\int_{\mathbb{R}^{N}}\left\vert u\right\vert
^{2^{\ast}}\geq\Vert u\Vert_{V}^{2}-C\Vert u\Vert_{V}^{2^{\ast}}\qquad\forall
u\in D^{1,2}(\mathbb{R}^{N}).
\]
As $2^{\ast}>2,$ there exists $\varrho>0$ such that $J_{V}(u)>0$ if $0<\Vert
u\Vert_{V}\leq\varrho.$ This proves (a).

(b): \ It follows from (a) that $\mathcal{N}_{V}$ is closed in $D^{1,2}%
(\mathbb{R}^{N}).$ Moreover, assumption $(f2)$ yields%
\[
J_{V}^{\prime}(u)u=2\Vert u\Vert_{V}^{2}-\int_{\mathbb{R}^{N}}f^{\prime
}(u)u^{2}-\int_{\mathbb{R}^{N}}f(u)u=\int_{\mathbb{R}^{N}}\left[
f(u)-f^{\prime}(u)u\right]  u<0
\]
for every $u\in\mathcal{N}_{V}.$ This implies that $0$ is a regular value of
the restriction of $J_{V}$ to $D^{1,2}(\mathbb{R}^{N})\smallsetminus\{0\},$
which is of class $\mathcal{C}^{1}.$ Hence, $\mathcal{N}_{V}$ is a
$\mathcal{C}^{1}$-submanifold of $D^{1,2}(\mathbb{R}^{N})\ $and a natural
constraint for $I_{V}.$

(c): \ Let $u\in\mathcal{N}_{V}$. From hypothesis $(f2)$ and statement (a), we
obtain that
\begin{align*}
I_{V}(u)  &  =I_{V}(u)-\frac{1}{\theta}I_{V}^{\prime}(u)u=\left(  \frac{1}%
{2}-\frac{1}{\theta}\right)  \Vert u\Vert_{V}^{2}+\int_{\mathbb{R}^{N}}\left(
\frac{1}{\theta}f(u)u-F(u)\right) \\
&  \geq\left(  \frac{1}{2}-\frac{1}{\theta}\right)  \Vert u\Vert_{V}^{2}%
\geq\left(  \frac{1}{2}-\frac{1}{\theta}\right)  \varrho^{2}.
\end{align*}
Hence, $c_{V}>0.$

(d): \ Let $u\in\mathcal{N}_{V}.$\ Then,%
\begin{align*}
\frac{d}{dt}I_{V}(tu)  &  =\frac{1}{t}J_{V}(tu)=t\Vert u\Vert_{V}^{2}%
-\int_{\mathbb{R}^{N}}f(tu)u=t\int_{\mathbb{R}^{N}}\left(  f(u)-\frac
{f(tu)}{t}\right)  u\\
&  =t\left[  \int_{u>0}\left(  \frac{f(u)}{u}-\frac{f(tu)}{tu}\right)
u^{2}+\int_{u<0}\left(  \frac{f(u)}{u}-\frac{f(tu)}{tu}\right)  u^{2}\right]
.
\end{align*}
Property $(f2)$ implies that $\frac{f(s)}{s}$ is strictly increasing for $s>0$
and strictly decreasing for $s<0.$ Therefore $\frac{d}{dt}I_{V}(tu)>0$ if
$t\in(0,1)$ and $\frac{d}{dt}I_{V}(tu)<0$ if $t\in(1,\infty).$ This proves (d).
\end{proof}

\begin{lemma}
\label{lem:sign}If $u$ is a solution of \emph{(\ref{prob})}\ with $I_{V}%
(u)\in\lbrack c_{V},2c_{V}),$ then $u$ does not change sign.
\end{lemma}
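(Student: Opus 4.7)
The plan is to argue by contradiction, using the standard decomposition of $u$ into positive and negative parts together with the additivity of the functionals under this split. Suppose that $u$ changes sign, so that both $u^{+}:=\max\{u,0\}$ and $u^{-}:=\max\{-u,0\}$ are nontrivial. Since $u\in D^{1,2}(\mathbb{R}^{N})$, both $u^{\pm}$ lie in $D^{1,2}(\mathbb{R}^{N})$ and satisfy the pointwise identities $|\nabla u|^{2}=|\nabla u^{+}|^{2}+|\nabla u^{-}|^{2}$ and $u^{2}=(u^{+})^{2}+(u^{-})^{2}$ almost everywhere.

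The key step is to show that both $u^{+}$ and $u^{-}$ belong to $\mathcal{N}_{V}$. Since $I_{V}^{\prime}(u)=0$, I would test the equation against $u^{+}$ and against $-u^{-}$. Using the extension $f(s)=-f(-s)$ for $s<0$ (so $f$ is odd and $F$ is even), one checks that $f(u)u^{+}=f(u^{+})u^{+}$ and $f(u)(-u^{-})=f(u^{-})u^{-}$ pointwise, since each product is supported either on $\{u>0\}$ or on $\{u<0\}$, where $u$ coincides with $u^{+}$ or $-u^{-}$ respectively. It follows that $J_{V}(u^{+})=I_{V}^{\prime}(u)u^{+}=0$ and $J_{V}(u^{-})=-I_{V}^{\prime}(u)u^{-}=0$, so $u^{\pm}\in\mathcal{N}_{V}$ and hence $I_{V}(u^{\pm})\geq c_{V}$ by the definition of $c_{V}$.

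To conclude, I would use the additivity
\[
I_{V}(u)=I_{V}(u^{+})+I_{V}(u^{-}),
\]
which follows from the three identities above together with $F(u)=F(u^{+})+F(u^{-})$ a.e. (using the evenness of $F$ on $\{u<0\}$ and the fact that $u^{+}$ and $u^{-}$ have disjoint supports). Combining this with $I_{V}(u^{\pm})\geq c_{V}$ gives $I_{V}(u)\geq 2c_{V}$, contradicting the hypothesis $I_{V}(u)<2c_{V}$. Therefore $u^{+}\equiv 0$ or $u^{-}\equiv 0$, i.e., $u$ does not change sign.

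The argument is essentially mechanical; the only point requiring a little care is verifying that the nonlinear term splits correctly, which is why the extension of $f$ as an odd function (made at the beginning of Section \ref{sec:limprob}) is used — without oddness the identities $f(u)u^{\pm}=f(u^{\pm})u^{\pm}$ and $F(u)=F(u^{+})+F(u^{-})$ would fail on $\{u<0\}$, and neither $u^{-}\in\mathcal{N}_{V}$ nor the additivity of $I_{V}$ would be available.
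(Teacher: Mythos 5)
Your proof is correct and follows essentially the same route as the paper: test $I_{V}'(u)=0$ against the positive and negative parts to place $u^{\pm}$ in $\mathcal{N}_{V}$, then use the additivity $I_{V}(u)=I_{V}(u^{+})+I_{V}(u^{-})\geq 2c_{V}$ to reach a contradiction. The only difference is cosmetic (your sign convention for $u^{-}$ versus the paper's $u^{-}:=\min\{u,0\}$), and your extra care in checking that the nonlinear terms split via the odd extension of $f$ is a welcome, if routine, elaboration of what the paper leaves implicit.
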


\begin{proof}
If $u$ is a solution of (\ref{prob}) then $0=I_{V}^{\prime}(u)u^{\pm}%
=J_{V}(u^{\pm}),$ where $u^{+}:=\max\{u,0\}$ and $u^{-}:=\min\{u,0\}.$ Thus,
if $u^{+}\neq0$ and $u^{-}\neq0,$ then $u^{\pm}\in\mathcal{N}_{V}$ and%
\[
I_{V}(u)=I_{V}(u^{+})+I_{V}(u^{-})\geq2c_{V},
\]
contradicting our assumption.
\end{proof}

\begin{lemma}
\label{nonexistence}The limit problem \emph{(\ref{pinf})} does not have a
solution $u$ with $I_{0}(u)\in(c_{0},2c_{0})$.
\end{lemma}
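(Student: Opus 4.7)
The plan is to argue by contradiction: assume a solution $u$ of (\ref{pinf}) with $I_0(u)\in(c_0,2c_0)$ exists, and derive a contradiction with the uniqueness part of Proposition \ref{prop:limit_problem}.

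First, I would apply Lemma \ref{lem:sign} with $V\equiv 0$. Since $I_0(u)\in[c_0,2c_0)$, the function $u$ cannot change sign. The extended nonlinearity $f$ is odd, so after replacing $u$ by $-u$ if necessary, I may assume $u\geq 0$ on $\mathbb{R}^N$. Moreover, $I_0(u)>c_0>0$ forces $u\not\equiv 0$; standard elliptic regularity together with the growth condition $(f1)$ gives $u\in\mathcal{C}^2(\mathbb{R}^N)$, and the strong maximum principle applied to $-\Delta u=f(u)\geq 0$ yields $u>0$ everywhere on $\mathbb{R}^N$.

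Second, the uniqueness clause in Proposition \ref{prop:limit_problem} then forces $u$ to be a translate of the ground state $\omega$. Since $\omega\in\mathcal{N}_0$ is a ground state for $I_0$, we have $I_0(\omega)=c_0$, so by translation invariance $I_0(u)=c_0$. This contradicts the standing assumption $I_0(u)>c_0$ and completes the argument.

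I do not foresee any real obstacle. The genuinely hard work is already packaged in the two tools being invoked: Lemma \ref{lem:sign}, which uses the splitting $I_0(u)=I_0(u^+)+I_0(u^-)\geq 2c_0$ together with the fact that the nontrivial parts $u^{\pm}$ sit on $\mathcal{N}_0$; and Proposition \ref{prop:limit_problem}, whose uniqueness statement for positive solutions rests on Erbe--Tang's theorem and therefore on hypothesis $(f3)$. The only mildly delicate point in the chain above is upgrading $u\geq 0$ to $u>0$ before invoking uniqueness, but this is a routine application of the strong maximum principle once the $\mathcal{C}^2$-regularity of $u$ has been noted.
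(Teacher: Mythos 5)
Your proposal is correct and follows exactly the paper's own argument: Lemma \ref{lem:sign} with $V\equiv 0$ to rule out sign changes, then the uniqueness in Proposition \ref{prop:limit_problem} to force $u=\pm\omega$ up to translation and hence $I_0(u)=c_0$. The paper's proof is just a compressed version of yours (it omits the strong-maximum-principle step you spell out to pass from $u\geq 0$ to $u>0$).
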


\begin{proof}
If $u$ is a solution of (\ref{pinf}) such that $I_{0}(u)\in\lbrack
c_{0},2c_{0})$ then, by Lemma \ref{lem:sign}, $u$ does not change sign. So, by
Proposition \ref{prop:limit_problem}, we have that $u=\pm\omega,$ up to a
translation. Hence, $I_{0}(u)=c_{0}.$
\end{proof}

The following version of Lions' vanishing lemma plays a crucial role in the
proof of Lemma \ref{lem:aproxPS}\ and of the splitting lemma (Lemma
\ref{lem:split}). Its proof was inspired by that of Lemma 2 in \cite{abdf}. We
write $B_{R}(y):=\{x\in\mathbb{R}^{N}:\left\vert x-y\right\vert <R\}.$

\begin{lemma}
\label{lem:lions}If $(u_{k})$ is bounded in $D^{1,2}(\mathbb{R}^{N})$ and
there exists $R>0$ such that%
\[
\lim_{k\rightarrow\infty}\left(  \sup_{y\in\mathbb{R}^{N}}\int_{B_{R}%
(y)}\left\vert u_{k}\right\vert ^{2}\right)  =0,
\]
then $\lim_{k\rightarrow\infty}\int_{\mathbb{R}^{N}}f(u_{k})u_{k}=0$.
\end{lemma}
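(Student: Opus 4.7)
The plan is to reduce the claim to showing $\int_{\{|u_k|\ge\delta\}}|u_k|^s\to0$ for some fixed $s\in(p,2^{\ast})$ and arbitrary $\delta\in(0,1)$, then let $\delta\to0$. By $(f1)$,
\[
\int_{\mathbb{R}^N}f(u_k)u_k\le A_1\int_{\{|u_k|\ge1\}}|u_k|^p+A_1\int_{\{|u_k|<1\}}|u_k|^q.
\]
Split $\{|u_k|<1\}=\{|u_k|<\delta\}\cup\{\delta\le|u_k|<1\}$. Over the first piece, $|u_k|^q\le\delta^{q-2^{\ast}}|u_k|^{2^{\ast}}$, so the integral is $O(\delta^{q-2^{\ast}})$ uniformly in $k$ by boundedness of $(u_k)$ in $L^{2^{\ast}}$. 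On the complementary set $\{|u_k|\ge\delta\}=\{\delta\le|u_k|<1\}\cup\{|u_k|\ge1\}$, since $s\in(p,2^{\ast})\subset[p,q]$, we have $|u_k|^p\le|u_k|^s$ where $|u_k|\ge1$ and $|u_k|^q\le|u_k|^s$ where $\delta\le|u_k|<1$, so their combined contribution is bounded by $\int_{\{|u_k|\ge\delta\}}|u_k|^s$.

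The key step is to apply the classical Lions vanishing lemma in $H^{1}(\mathbb{R}^N)$, which is unavailable for $u_k$ directly since $u_k$ need not lie in $L^{2}$. I would therefore introduce the truncation $v_k:=(|u_k|-\delta/2)_{+}$. Then $|\nabla v_k|\le|\nabla u_k|$, so $\|\nabla v_k\|_{L^{2}}$ is bounded. Moreover, $v_k$ is supported in $\{|u_k|>\delta/2\}$, where $|u_k|^{2}\le(2/\delta)^{2^{\ast}-2}|u_k|^{2^{\ast}}$, yielding
\[
\int v_k^{2}\le\int_{\{|u_k|>\delta/2\}}|u_k|^{2}\le(2/\delta)^{2^{\ast}-2}\int|u_k|^{2^{\ast}}\le C(\delta).
\]
Hence $(v_k)$ is bounded in $H^{1}(\mathbb{R}^N)$, with a $\delta$-dependent bound — harmless since $\delta$ is fixed before letting $k\to\infty$. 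Since $v_k\le|u_k|$ pointwise, the vanishing hypothesis transfers: $\sup_y\int_{B_R(y)}v_k^{2}\le\sup_y\int_{B_R(y)}|u_k|^{2}\to0$.

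The classical Lions vanishing lemma now gives $v_k\to0$ in $L^{s}(\mathbb{R}^N)$ for every $s\in(2,2^{\ast})$. Pick $s\in(p,2^{\ast})$ and observe that on $\{|u_k|\ge\delta\}$ one has $v_k=|u_k|-\delta/2\ge|u_k|/2$, so $\int_{\{|u_k|\ge\delta\}}|u_k|^{s}\le 2^{s}\int v_k^{s}\to0$. Combined with the splitting,
\[
\limsup_{k\to\infty}\int_{\mathbb{R}^N}f(u_k)u_k\le A_1\delta^{q-2^{\ast}}\sup_k\int|u_k|^{2^{\ast}},
\]
and letting $\delta\to0$ (legitimate because $q>2^{\ast}$) concludes the proof. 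The main obstacle is exactly the absence of a global $L^{2}$ bound on $u_k$; the truncation $v_k$ is the standard device for restoring $H^{1}$-boundedness on the piece $\{|u_k|\ge\delta\}$ while losing no information about $|u_k|$ there (up to a factor of $2$).
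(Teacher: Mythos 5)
Your proof is correct and follows essentially the same strategy as the paper: split at a small threshold, absorb the set where $|u_k|$ is small via the bound $|u_k|^{q}\le\delta^{\,q-2^{\ast}}|u_k|^{2^{\ast}}$, and handle the complementary set by applying the classical Lions vanishing lemma to an $H^{1}$-bounded surrogate of $|u_k|$. The only difference is the choice of surrogate — you use the shift truncation $(|u_k|-\delta/2)_{+}$, while the paper glues $|u_k|$ above the threshold to the power $\varepsilon^{-(\eta-1)}|u_k|^{\eta}$ with $\eta=2^{\ast}/2$ below it — and both devices serve the identical purpose of restoring a global $L^{2}$ bound.
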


\begin{proof}
Fix $\varepsilon\in(0,1)$ and set $\eta:=\frac{2^{\ast}}{2}>1.$ For each $k,$
consider the function
\[
w_{k}:=\left\{
\begin{array}
[c]{ll}%
|u_{k}| & \text{if }\left\vert u_{k}\right\vert \geq\varepsilon,\\
\varepsilon^{-(\eta-1)}\left\vert u_{k}\right\vert ^{\eta} & \text{if
}\left\vert u_{k}\right\vert \leq\varepsilon.
\end{array}
\right.
\]
Observe that%
\[%
\begin{array}
[c]{cc}%
\left\vert \nabla w_{k}\right\vert ^{2}=\eta^{2}\varepsilon^{-2(\eta
-1)}\left\vert u_{k}\right\vert ^{2\left(  \eta-1\right)  }\left\vert \nabla
u_{k}\right\vert ^{2}\leq\eta^{2}\left\vert \nabla u_{k}\right\vert ^{2} &
\text{if }\left\vert u_{k}\right\vert \leq\varepsilon,
\end{array}
\]
and%
\[%
\begin{array}
[c]{ll}%
\left\vert w_{k}\right\vert ^{2}\leq\varepsilon^{-(2^{\ast}-2)}\left\vert
u_{k}\right\vert ^{2^{\ast}}\leq\left\vert u_{k}\right\vert ^{2} &
\text{if\ }\left\vert u_{k}\right\vert \leq\varepsilon,\\
\left\vert w_{k}\right\vert ^{2}\leq\left\vert u_{k}\right\vert ^{2-2^{\ast}%
}\left\vert u_{k}\right\vert ^{2^{\ast}}\leq\varepsilon^{-(2^{\ast}%
-2)}\left\vert u_{k}\right\vert ^{2^{\ast}} & \text{if }\left\vert
u_{k}\right\vert \geq\varepsilon.
\end{array}
\]
Using these inequalities we obtain that
\[
\left\vert w_{k}\right\vert ^{2}\leq\left\vert u_{k}\right\vert ^{2}%
,\qquad\left\vert w_{k}\right\vert ^{2}\leq\varepsilon^{-(2^{\ast}%
-2)}\left\vert u_{k}\right\vert ^{2^{\ast}},\qquad|\nabla w_{k}|^{2}\leq
\eta^{2}|\nabla u_{k}|^{2}.
\]
Therefore, as $(u_{k})$ is bounded in $D^{1,2}(\mathbb{R}^{N}),$ we have that
\[
\left\Vert w_{k}\right\Vert _{H^{1}(\mathbb{R}^{N})}^{2}:=\int_{\mathbb{R}%
^{N}}\left\vert \nabla w_{k}\right\vert ^{2}+\left\vert w_{k}\right\vert
^{2}\leq\int_{\mathbb{R}^{N}}\eta^{2}|\nabla u_{k}|^{2}+\int_{\mathbb{R}^{N}%
}\varepsilon^{-(2^{\ast}-2)}\left\vert u_{k}\right\vert ^{2^{\ast}}\leq C,
\]
i.e., $\left(  w_{k}\right)  $ is bounded in $H^{1}(\mathbb{R}^{N}).$
Moreover,%
\[
\lim_{k\rightarrow\infty}\left(  \sup_{y\in\mathbb{R}^{N}}\int_{B_{R}%
(y)}\left\vert w_{k}\right\vert ^{2}\right)  =0.
\]
It follows from Lions' vanishing lemma \cite[Lemma 1.21]{w} that
\[
w_{k}\rightarrow0\ \ \text{in}\ \ L^{s}(\mathbb{R}^{N})\qquad\text{for
each}\ 2<s<2^{\ast}.
\]
Now, using $(f1),$ we obtain
\begin{align*}
\left\vert \int_{\mathbb{R}^{N}}f(u_{k})u_{k}\right\vert  &  \leq A_{1}\left(
\int_{\left\vert u_{k}\right\vert \geq1}\left\vert u_{k}\right\vert ^{p}%
+\int_{\left\vert u_{k}\right\vert \leq1}\left\vert u_{k}\right\vert
^{q}\right) \\
&  \leq A_{1}\left(  \int_{\left\vert u_{k}\right\vert \geq\varepsilon
}\left\vert u_{k}\right\vert ^{p}+\int_{\varepsilon\leq\left\vert
u_{k}\right\vert \leq1}\left\vert u_{k}\right\vert ^{q}+\int_{\left\vert
u_{k}\right\vert \leq\varepsilon}\left\vert u_{k}\right\vert ^{q}\right) \\
&  \leq2A_{1}\int_{\left\vert u_{k}\right\vert \geq\varepsilon}\left\vert
u_{k}\right\vert ^{p}+A_{1}\int_{\left\vert u_{k}\right\vert \leq\varepsilon
}\left\vert u_{k}\right\vert ^{q}\\
&  \leq2A_{1}\int_{\left\vert u_{k}\right\vert \geq\varepsilon}\left\vert
w_{k}\right\vert ^{p}+A_{1}\int_{\left\vert u_{k}\right\vert \leq\varepsilon
}\left\vert u_{k}\right\vert ^{q-2^{\ast}}\left\vert u_{k}\right\vert
^{2^{\ast}}\\
&  \leq2A_{1}\int_{\mathbb{R}^{N}}\left\vert w_{k}\right\vert ^{p}%
+A_{1}\varepsilon^{q-2^{\ast}}\int_{\mathbb{R}^{N}}\left\vert u_{k}\right\vert
^{2^{\ast}}.
\end{align*}
As $(u_{k})$ is bounded in $D^{1,2}(\mathbb{R}^{N})$ and $w_{k}\rightarrow0$
in $L^{p}(\mathbb{R}^{N})$, we conclude that
\[
\left\vert \int_{\mathbb{R}^{N}}f(u_{k})u_{k}\right\vert \leq C\varepsilon
^{q-2^{\ast}}.
\]
Since $\varepsilon\in(0,1)$ was arbitrarily chosen, the statement is proved.
\end{proof}

We write $\nabla I_{V}(u)$ and $\nabla J_{V}(u)$ for the gradients of $I_{V}$
and $J_{V}$ at $u$ with respect to the scalar product (\ref{spV}).

\begin{lemma}
\label{lem:aproxPS}Let $(u_{k})$ be a sequence in $D^{1,2}(\mathbb{R}^{N})$
such that $I_{V}(u_{k})\rightarrow d>0$ and $J_{V}(u_{k})\rightarrow0.$ Then
there exist $a_{1}>a_{0}>0$ such that, after passing to a subsequence,%
\[
a_{0}\leq\Vert u_{k}\Vert_{V}\leq a_{1},\text{\qquad}a_{0}\leq\Vert\nabla
J_{V}(u_{k})\Vert_{V}\leq a_{1},\text{\qquad}\left\vert J_{V}^{\prime}%
(u_{k})u_{k}\right\vert \geq a_{0}.
\]

\end{lemma}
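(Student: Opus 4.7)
The plan is to handle the three estimates in turn; only the lower bound on $|J_V'(u_k)u_k|$ is substantive, and for it the key tool will be the vanishing lemma (Lemma~\ref{lem:lions}).

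For the two bounds on $\|u_k\|_V$, I would repeat the Ambrosetti--Rabinowitz computation of Lemma~\ref{lem:nehari}(c). Using $(f2)$ together with $I_V(u_k)\to d$ and $J_V(u_k)\to 0$,
\[
I_V(u_k) - \tfrac{1}{\theta}J_V(u_k) \;\geq\; \Bigl(\tfrac{1}{2}-\tfrac{1}{\theta}\Bigr)\|u_k\|_V^2,
\]
and passing to the limit gives $\|u_k\|_V\leq a_1$. Since $F\geq 0$ (a consequence of $(f2)$), one also has $\tfrac{1}{2}\|u_k\|_V^2\geq I_V(u_k)\to d>0$, which yields $\|u_k\|_V\geq a_0$ for large~$k$ after a mild shrinking of $a_0$.

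The heart of the proof is the estimate $|J_V'(u_k)u_k|\geq a_0$. The natural starting point is the algebraic identity
\[
J_V'(u_k)u_k \;=\; 2J_V(u_k) \;-\; \int_{\mathbb{R}^N}H(u_k),\qquad H(s):=f'(s)s^2-f(s)s,
\]
where $H\geq 0$ pointwise (with strict positivity off the origin) by $(f2)$, extended to $s<0$ using the oddness of $f$. Since $J_V(u_k)\to 0$, it suffices to prove $\liminf_k\int_{\mathbb{R}^N}H(u_k)>0$. This is the main obstacle I expect: $(f2)$ furnishes only strict pointwise positivity of $H$, not any quantitative lower bound of the form $H(s)\geq cf(s)s$, so I cannot reduce the integral of $H(u_k)$ to that of $f(u_k)u_k$. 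My plan is to circumvent this by ruling out vanishing of $(u_k)$. The identity $J_V(u_k)=\|u_k\|_V^2-\int f(u_k)u_k\to 0$ combined with $\|u_k\|_V\geq a_0$ gives $\int f(u_k)u_k\geq a_0^2/2$ for $k$ large, so the contrapositive of Lemma~\ref{lem:lions} produces some $R>0$ and points $y_k\in\mathbb{R}^N$ with $\int_{B_R(y_k)}|u_k|^2\geq c>0$. Setting $\tilde u_k:=u_k(\cdot+y_k)$ and extracting a further subsequence so that $\tilde u_k\rightharpoonup\tilde u$ weakly in $D^{1,2}(\mathbb{R}^N)$ and almost everywhere, Rellich's theorem forces $\tilde u\not\equiv 0$. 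Fatou's lemma, applied to the nonnegative integrand $H(\tilde u_k)$, and the translation invariance of Lebesgue measure then give
\[
\liminf_k\int_{\mathbb{R}^N}H(u_k) \;=\; \liminf_k\int_{\mathbb{R}^N}H(\tilde u_k) \;\geq\; \int_{\mathbb{R}^N}H(\tilde u) \;>\;0,
\]
which (after another shrinking of $a_0$) supplies the desired bound.

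Finally, the gradient estimates are soft. The upper bound $\|\nabla J_V(u_k)\|_V\leq a_1$ follows from
\[
|J_V'(u_k)v| \;\leq\; C\bigl(\|u_k\|_V + \|u_k\|_V^{2^{\ast}-1}\bigr)\|v\|_V,
\]
which is an immediate consequence of $(f1)$ (via $|f(s)|,|f'(s)s|\leq A_1|s|^{2^{\ast}-1}$), Sobolev's inequality, and the already-proved bound $\|u_k\|_V\leq a_1$. The matching lower bound is a one-line application of Cauchy--Schwarz in the inner product (\ref{spV}):
\[
\|\nabla J_V(u_k)\|_V \;\geq\; \frac{|J_V'(u_k)u_k|}{\|u_k\|_V} \;\geq\; \frac{a_0}{a_1}.
\]
Redefining $a_0$ and $a_1$ as the minimum and maximum of the constants obtained in the three steps yields the stated uniform bounds.
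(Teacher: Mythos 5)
Your proposal is correct and follows essentially the same route as the paper: the Ambrosetti--Rabinowitz bounds for $\Vert u_k\Vert_V$, the reduction of $J_V'(u_k)u_k$ to $-\int_{\mathbb{R}^N}\bigl(f'(u_k)u_k^2-f(u_k)u_k\bigr)+o(1)$, the use of Lemma~\ref{lem:lions} to rule out vanishing, translation plus Fatou on the nonnegative integrand to get a positive lower bound, and Cauchy--Schwarz for the gradient estimate. The only cosmetic difference is that you apply Fatou on all of $\mathbb{R}^N$ while the paper restricts to a positive-measure set where the weak limit is nonzero; both work.
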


\begin{proof}
From assumption $(f2)$ we get that%
\[
d+o(1)=I_{V}(u_{k})\leq I_{V}(u_{k})+\int_{\mathbb{R}^{N}}F(u_{k})=\frac{1}%
{2}\Vert u_{k}\Vert_{V}^{2},
\]
and
\begin{align*}
d+o(1)  &  =I_{V}(u_{k})-\frac{1}{\theta}J_{V}(u_{k})\\
&  =\left(  \frac{1}{2}-\frac{1}{\theta}\right)  \Vert u_{k}\Vert_{V}^{2}%
+\int_{\mathbb{R}^{N}}\left[  \frac{1}{\theta}f(u_{k})u_{k}-F(u_{k})\right]
\geq\left(  \frac{1}{2}-\frac{1}{\theta}\right)  \Vert u_{k}\Vert_{V}^{2}.
\end{align*}
Hence, $(u_{k})$ is bounded and bounded away from $0$ in $D^{1,2}%
(\mathbb{R}^{N})$.

By assumption $(f1),$ for any $v\in D^{1,2}(\mathbb{R}^{N}),$ we have that%
\begin{align*}
\left\vert \int_{\mathbb{R}^{N}}\left[  f^{\prime}(u_{k})u_{k}+f(u_{k}%
)\right]  v\right\vert  &  \leq C\int_{\mathbb{R}^{N}}\left\vert
u_{k}\right\vert ^{2^{\ast}-1}\left\vert v\right\vert \leq C\left\vert
u_{k}\right\vert _{2^{\ast}}^{2^{\ast}-1}\left\vert v\right\vert _{2^{\ast}}\\
&  \leq C\left\Vert u_{k}\right\Vert ^{2^{\ast}-1}\left\Vert v\right\Vert \leq
C\left\Vert v\right\Vert _{V}.
\end{align*}
Therefore,%
\[
\left\vert \left\langle \nabla J_{V}(u_{k}),v\right\rangle _{V}\right\vert
=\left\vert 2\left\langle u_{k},v\right\rangle _{V}-\int_{\mathbb{R}^{N}%
}\left[  f^{\prime}(u_{k})u_{k}+f(u_{k})\right]  v\right\vert \leq C\left\Vert
v\right\Vert _{V}.
\]
This implies that $(\nabla J_{V}(u_{k}))$ is bounded in $D^{1,2}%
(\mathbb{R}^{N}).$ Hence, after passing to a subsequence, we have that
$\left\vert J_{V}^{\prime}(u_{k})u_{k}\right\vert \rightarrow a\geq0.$ Next,
we show that $a>0$.

As $J_{V}(u_{k})\rightarrow0$ we have that
\[
0<a_{0}^{2}\leq\Vert u_{k}\Vert_{V}^{2}=\int_{\mathbb{R}^{N}}f(u_{k}%
)u_{k}+o(1).
\]
So, by Lemma \ref{lem:lions}, there exist $\delta>0$ and a sequence $(y_{k})$
in $\mathbb{R}^{N}$ such that%
\begin{equation}
\int_{B_{1}(y_{k})}|u_{k}|^{2}=\sup_{y\in\mathbb{R}^{N}}\int_{B_{1}(y)}%
|u_{k}|^{2}>\delta. \label{eq:nontrivial}%
\end{equation}
Set $\tilde{u}_{k}:=u_{k}(\cdot+y_{k})$. Replacing $(\tilde{u}_{k})$ by a
subsequence,\ we have that $\tilde{u}_{k}\rightharpoonup u$ weakly in
$D^{1,2}(\mathbb{R}^{N})$ and $\tilde{u}_{k}\rightarrow u$ in $L_{loc}%
^{2}(\mathbb{R}^{N})$. The inequality (\ref{eq:nontrivial}) implies that
$u\neq0.$ Hence, there exists a subset $\Lambda$ of positive measure such that
$u(x)\neq0$ for every $x\in\Lambda$. Assumption $(f2)$ implies that
$f^{\prime}(s)s^{2}-f(s)s>0$ if $s\neq0$. So, using Fatou's lemma, we conclude
that
\begin{align*}
a  &  =\lim_{k\rightarrow\infty}\left\vert J_{V}^{\prime}(u_{k})u_{k}%
\right\vert =\lim_{k\rightarrow\infty}\left\vert 2\Vert u_{k}\Vert_{V}%
^{2}-\int_{\mathbb{R}^{N}}\left(  f^{\prime}(u_{k})u_{k}^{2}+f(u_{k}%
)u_{k}\right)  \right\vert \\
&  =\lim_{k\rightarrow\infty}\int_{\mathbb{R}^{N}}\left(  f^{\prime}%
(u_{k})u_{k}^{2}-f(u_{k})u_{k}\right)  =\lim_{k\rightarrow\infty}%
\int_{\mathbb{R}^{N}}\left(  f^{\prime}(\tilde{u}_{k})\tilde{u}_{k}%
^{2}-f(\tilde{u}_{k})\tilde{u}_{k}\right) \\
&  \geq\liminf_{k\rightarrow\infty}\int_{\Lambda}\left(  f^{\prime}(\tilde
{u}_{k})\tilde{u}_{k}^{2}-f(\tilde{u}_{k})\tilde{u}_{k}\right)  \geq
\int_{\Lambda}\left(  f^{\prime}(u)u^{2}-f(u)u\right)  >0.
\end{align*}
This proves that $a>0$ and, hence that $\left(  J_{V}^{\prime}(u_{k}%
)u_{k}\right)  $ is bounded away from $0$ in $\mathbb{R}$. It follows that
$\Vert\nabla J_{V}(u_{k})\Vert_{V}\geq\frac{\left\vert J_{V}^{\prime}%
(u_{k})u_{k}\right\vert }{\Vert u_{k}\Vert_{V}}\geq\widetilde{a}>0.$ This
finishes the proof.
\end{proof}

For $\sigma\in\mathbb{R}$, we set $\mathcal{M}_{\sigma}:=J_{V}^{-1}(\sigma)$
if $\sigma\neq0$ and $\mathcal{M}_{0}:=\mathcal{N}_{V}.$ If $\left\vert
\sigma\right\vert $ is small enough and $u\in\mathcal{M}_{\sigma},$ we write
$\nabla_{\mathcal{M}_{\sigma}}I_{V}(u)$ for the orthogonal projection of
$\nabla I_{V}(u)$ onto the tangent space to $\mathcal{M}_{\sigma}$ at $u.$

Recall that a sequence $(u_{k})$ in $D^{1,2}(\mathbb{R}^{N})$ is said to be a
$(PS)_{d}$\emph{-sequence for} $I_{V}$ if $I_{V}(u_{k})\rightarrow d$ and
$\nabla I_{V}(u_{k})\rightarrow0$.

\begin{lemma}
\label{lem:PS}Let $\sigma_{k}\in\mathbb{R}$ and $u_{k}\in\mathcal{M}%
_{\sigma_{k}}$ be such that $\sigma_{k}\rightarrow0,$ $I_{V}(u_{k})\rightarrow
d>0$ and $\nabla_{\mathcal{M}_{\sigma_{k}}}I_{V}(u_{k})\rightarrow0$. Then
$(u_{k})$ is a $(PS)_{d}$-sequence for $I_{V}$.
\end{lemma}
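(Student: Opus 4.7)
The plan is to exhibit $\nabla I_V(u_k)$ as its tangential part plus a Lagrange multiplier times $\nabla J_V(u_k)$, and then show that both the tangential part and the multiplier tend to zero. The hypothesis $\nabla_{\mathcal{M}_{\sigma_k}}I_V(u_k)\to 0$ already takes care of the tangential part, so the whole content of the lemma is the vanishing of the multiplier, and that will be forced by a sharp test of $\nabla I_V(u_k)$ against $u_k$ itself, combined with the quantitative information provided by Lemma \ref{lem:aproxPS}.

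First I would check that Lemma \ref{lem:aproxPS} applies: by construction $J_V(u_k)=\sigma_k\to 0$ and $I_V(u_k)\to d>0$, so after passing to a subsequence we obtain constants $a_1>a_0>0$ with
\[
a_0\leq\|u_k\|_V\leq a_1,\qquad a_0\leq\|\nabla J_V(u_k)\|_V\leq a_1,\qquad |J_V'(u_k)u_k|\geq a_0.
\]
The strictly positive lower bound on $\|\nabla J_V(u_k)\|_V$ is what makes the orthogonal decomposition of $\nabla I_V(u_k)$ with respect to the tangent space $T_{u_k}\mathcal{M}_{\sigma_k}=\ker J_V'(u_k)$ legitimate and uniformly controlled:
\[
\nabla I_V(u_k)=\nabla_{\mathcal{M}_{\sigma_k}}I_V(u_k)+\lambda_k\nabla J_V(u_k),\qquad \lambda_k:=\frac{\langle\nabla I_V(u_k),\nabla J_V(u_k)\rangle_V}{\|\nabla J_V(u_k)\|_V^2}.
\]

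Next I would pair this identity with $u_k$ in $\langle\cdot,\cdot\rangle_V$. Since $\langle\nabla I_V(u_k),u_k\rangle_V=I_V'(u_k)u_k=J_V(u_k)=\sigma_k$ and $\langle\nabla J_V(u_k),u_k\rangle_V=J_V'(u_k)u_k$, this gives
\[
\sigma_k-\lambda_k J_V'(u_k)u_k=\langle\nabla_{\mathcal{M}_{\sigma_k}}I_V(u_k),u_k\rangle_V.
\]
The right-hand side is bounded in absolute value by $\|\nabla_{\mathcal{M}_{\sigma_k}}I_V(u_k)\|_V\|u_k\|_V\leq a_1\,o(1)=o(1)$, and $\sigma_k=o(1)$ by assumption, so $|\lambda_k|\,|J_V'(u_k)u_k|=o(1)$. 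Using the lower bound $|J_V'(u_k)u_k|\geq a_0$, we conclude $\lambda_k\to 0$.

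Putting everything together,
\[
\|\nabla I_V(u_k)\|_V\leq\|\nabla_{\mathcal{M}_{\sigma_k}}I_V(u_k)\|_V+|\lambda_k|\,\|\nabla J_V(u_k)\|_V\leq o(1)+a_1|\lambda_k|\longrightarrow 0,
\]
which, together with $I_V(u_k)\to d$, means that $(u_k)$ is a $(PS)_d$-sequence for $I_V$. The only delicate point is the uniform nondegeneracy needed to justify the decomposition and to bound the multiplier — namely that $\|\nabla J_V(u_k)\|_V$ and $|J_V'(u_k)u_k|$ stay bounded away from zero — and Lemma \ref{lem:aproxPS} supplies precisely this, so no further work is required.
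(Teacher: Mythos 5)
Your proof is correct and follows essentially the same route as the paper: decompose $\nabla I_V(u_k)$ into its tangential part plus a multiple $\lambda_k\nabla J_V(u_k)$, test against $u_k$ to get $\sigma_k=\langle\nabla_{\mathcal{M}_{\sigma_k}}I_V(u_k),u_k\rangle_V+\lambda_k J_V'(u_k)u_k$, and use the bounds from Lemma \ref{lem:aproxPS} (boundedness of $\|u_k\|_V$ and $\|\nabla J_V(u_k)\|_V$, and $|J_V'(u_k)u_k|$ bounded away from zero) to force $\lambda_k\to 0$ and hence $\nabla I_V(u_k)\to 0$. No gaps.
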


\begin{proof}
Let $t_{k}\in\mathbb{R}$ be such that
\begin{equation}
\nabla I_{V}(u_{k})=\nabla_{\mathcal{M}_{\sigma_{k}}}I_{V}(u_{k})+t_{k}\nabla
J_{V}(u_{k}). \label{eq:grad}%
\end{equation}
Taking the scalar product with $u_{k},$ we get that%
\[
\sigma_{k}=J_{V}(u_{k})=I_{V}^{\prime}(u_{k})u_{k}=\left\langle \nabla
_{\mathcal{M}_{\sigma_{k}}}I_{V}(u_{k}),u_{k}\right\rangle _{V}+t_{k}%
J_{V}^{\prime}(u_{k})u_{k}.
\]
By Lemma \ref{lem:aproxPS}\ we have that $(u_{k})$ is bounded and $\left(
J_{V}^{\prime}(u_{k})u_{k}\right)  $ is bounded away from $0$. So, as
$\sigma_{k}\rightarrow0$ and $\nabla_{\mathcal{M}_{\sigma_{k}}}I_{V}%
(u_{k})\rightarrow0,$ we conclude that $t_{k}\rightarrow0.$ Moreover, as
$(\nabla J_{V}(u_{k}))$ is bounded in $D^{1,2}(\mathbb{R}^{N}),$ from equation
(\ref{eq:grad}) we get that $\nabla I_{V}(u_{k})\rightarrow0,$ as claimed.
\end{proof}

\begin{lemma}
\label{lem:bl}If $u_{k}\rightharpoonup u$ weakly in $D^{1,2}(\mathbb{R}^{N})$,
the following statements hold true:

\begin{enumerate}
\item[(a)] $\Vert u_{k}\Vert_{V}^{2}=\Vert u_{k}-u\Vert^{2}+\Vert u\Vert
_{V}^{2}+o(1).\medskip$

\item[(b)] $\int_{\mathbb{R}^{N}}\left\vert f(u_{k})-f(u)\right\vert
\left\vert \varphi\right\vert =o(1)$ \ for every $\varphi\in\mathcal{C}%
_{c}^{\infty}(\mathbb{R}^{N}),$ $\medskip$

\item[(c)] $\int_{\mathbb{R}^{N}}F(u_{k})=\int_{\mathbb{R}^{N}}F(u_{k}%
-u)+\int_{\mathbb{R}^{N}}F(u)+o(1).\medskip$

\item[(d)] $f(u_{k})-f(u_{k}-u)\longrightarrow f(u)$ $\ $in $\left(
D^{1,2}(\mathbb{R}^{N})\right)  ^{\prime}.$
\end{enumerate}
\end{lemma}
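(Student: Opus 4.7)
The plan is to handle part (a) by an algebraic expansion together with a Vitali-type convergence argument for the potential term, part (b) by local compactness combined with the growth bound from $(f1)$, and parts (c)--(d) by two parallel Brezis--Lieb-type computations.

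For (a), I would first expand
\begin{equation*}
\Vert u_{k}\Vert_{V}^{2} - \Vert u_{k}-u\Vert^{2} - \Vert u\Vert_{V}^{2}
= 2\langle \nabla u_{k}-\nabla u,\nabla u\rangle_{L^{2}}
+ \int_{\mathbb{R}^{N}} V\bigl(u_{k}^{2}-u^{2}\bigr),
\end{equation*}
so that the first term on the right vanishes by weak convergence. After passing to a subsequence $Vu_{k}^{2}\to Vu^{2}$ a.e., and the family $\{Vu_{k}^{2}\}$ is uniformly integrable and tight by the H\"older estimate $\int_{E}|V|u_{k}^{2}\leq |V\chi_{E}|_{N/2}\,|u_{k}|_{2^{\ast}}^{2}$, which is small whenever $|E|$ is small or $E$ lies outside a sufficiently large ball (using $V\in L^{N/2}$ and the uniform $L^{2^{\ast}}$-bound on $(u_{k})$). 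Vitali's theorem then yields $\int Vu_{k}^{2}\to\int Vu^{2}$, completing (a). Statement (b) is simpler: by Rellich, $u_{k}\to u$ in $L^{s}_{\mathrm{loc}}$ for every $s<2^{\ast}$, hence pointwise a.e.\ on $\operatorname{supp}\varphi$ along a subsequence; the bound $|f(u_{k})\varphi|\leq A_{1}|u_{k}|^{2^{\ast}-1}|\varphi|$ together with H\"older gives uniform integrability on $\operatorname{supp}\varphi$, and Vitali's theorem applies.

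For (c) and (d), the strategy is the same Brezis--Lieb-type argument performed in $L^{1}$ and in $L^{(2^{\ast})^{\prime}}$ respectively, where $(2^{\ast})^{\prime}=2N/(N+2)$. Using the global bounds $|f(s)|\leq A_{1}|s|^{2^{\ast}-1}$ and $|F(s)|\leq C|s|^{2^{\ast}}$ which follow from $(f1)$, combined with the mean value theorem and Young's inequality with conjugate pair $(2^{\ast},(2^{\ast})^{\prime})$, I would establish the pointwise bounds
\begin{equation*}
|F(a+b)-F(a)-F(b)|\leq \varepsilon|a|^{2^{\ast}} + C_{\varepsilon}|b|^{2^{\ast}},
\end{equation*}
\begin{equation*}
|f(a+b)-f(a)-f(b)|^{(2^{\ast})^{\prime}}\leq \varepsilon|a|^{2^{\ast}} + C_{\varepsilon}|b|^{2^{\ast}},
\end{equation*}
valid for every $\varepsilon>0$ and all $a,b\in\mathbb{R}$, with $C_{\varepsilon}$ independent of $a,b$. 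Setting $a:=u_{k}-u$ and $b:=u$, the cutoff $G_{k}^{\varepsilon}:=[|F(u_{k})-F(u_{k}-u)-F(u)|-\varepsilon|u_{k}-u|^{2^{\ast}}]^{+}$ is dominated by the integrable function $C_{\varepsilon}|u|^{2^{\ast}}$ and tends to $0$ a.e., so $\int G_{k}^{\varepsilon}\to 0$ by dominated convergence. Combining this with the uniform $L^{2^{\ast}}$-bound on $(u_{k}-u)$ and letting $\varepsilon\to 0$ gives (c). The same scheme performed in $L^{(2^{\ast})^{\prime}}$ yields $f(u_{k})-f(u_{k}-u)-f(u)\to 0$ in $L^{(2^{\ast})^{\prime}}$, and the continuous embedding $L^{(2^{\ast})^{\prime}}\hookrightarrow (D^{1,2}(\mathbb{R}^{N}))^{\prime}$ dual to the Sobolev inequality delivers (d).

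The main obstacle is the verification of the two pointwise Young-type inequalities, which rest on the identity $(2^{\ast}-1)(2^{\ast})^{\prime}=2^{\ast}$ together with the fact that the growth exponents in $(f1)$ are positioned around $2^{\ast}$; once they are in place, the rest reduces to routine dominated-convergence bookkeeping.
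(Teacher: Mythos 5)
Your proof is correct, but it follows a genuinely different route from the paper's. For (a), the paper shows directly that $\int_{\mathbb{R}^{N}}|V|\,(u_{k}-u)^{2}\to 0$ by splitting into a large ball (where it uses $V\in L^{r}$, $r>N/2$, and strong local convergence in $L^{2r/(r-1)}$) and its complement (where it uses the smallness of the $L^{N/2}$-tail); your Vitali argument reaches the same conclusion using only $V\in L^{N/2}$ plus a.e.\ convergence along subsequences, which is slightly more economical in hypotheses. For (b)--(d), the paper systematically exploits the double-power structure of $(f1)$: it derives the pointwise bound $|f(s+t)-f(s)|\leq h(|s|+|t|)|t|$ with $h(s)=A_{1}\min\{|s|^{p-2},|s|^{q-2}\}$, invokes Proposition~\ref{prop:bpr} to place $h(|u|+|u_{k}-u|)$ in suitable Lebesgue spaces, and then splits $\mathbb{R}^{N}$ into $\{|x|>R\}$ and $\{|x|\leq R\}$, exploiting strong local convergence in $L^{\eta}$ for subcritical $\eta$. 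You instead discard the double-power information in favor of the single global bounds $|f(s)|\leq A_{1}|s|^{2^{\ast}-1}$, $|F(s)|\leq A_{1}|s|^{2^{\ast}}$, and run the classical Brezis--Lieb scheme in $L^{1}$ and $L^{(2^{\ast})'}$; your two Young-type pointwise inequalities do hold (the exponent bookkeeping $(2^{\ast}-2)\tfrac{2^{\ast}}{2^{\ast}-1}+\tfrac{2^{\ast}}{2^{\ast}-1}=2^{\ast}$ checks out), the cutoff functions are dominated by $C_{\varepsilon}|u|^{2^{\ast}}\in L^{1}$, and the embedding $L^{(2^{\ast})'}\hookrightarrow (D^{1,2}(\mathbb{R}^{N}))'$ is exactly dual to Sobolev, so (d) even comes out in the stronger $L^{(2^{\ast})'}$ topology. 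The only points you should make explicit are the standard subsequence--of--a--subsequence argument upgrading a.e.\ convergence along subsequences to a statement about the full sequence, and that $f(0)=0$ (from $(f1)$) so that $f(u_{k}-u)\to 0$ a.e. What your approach buys is brevity and independence from the Orlicz-space machinery of Proposition~\ref{prop:bpr}; what the paper's approach buys is a set of estimates (the function $h$ and the local/far splitting) that it reuses verbatim elsewhere, e.g.\ in Lemmas~\ref{lem:lambda1/2} and \ref{tvm}.
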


\begin{proof}
(a): $\ $Set $v_{k}:=u_{k}-u.$ Assumption $(V1)$ states that $V\in
L^{N/2}(\mathbb{R}^{N})\cap L^{r}(\mathbb{R}^{N})$ with $r>N/2.$ As
$\eta:=\frac{2r}{r-1}<2^{\ast},$ we have that $v_{k}\rightarrow0$ in
$L_{loc}^{\eta}(\mathbb{R}^{N}).$ Given $\varepsilon>0,$ we fix $R>0$ such
that $\int_{\mathbb{R}^{N}\smallsetminus B_{R}(0)}\left\vert V\right\vert
^{N/2}\leq\varepsilon^{N/2}.$ Then,%
\begin{align*}
\int_{\mathbb{R}^{N}}\left\vert V\right\vert v_{k}^{2}  &  =\int_{B_{R}%
(0)}\left\vert V\right\vert v_{k}^{2}+\int_{\mathbb{R}^{N}\smallsetminus
B_{R}(0)}\left\vert V\right\vert v_{k}^{2}\\
&  \leq\left\vert V\right\vert _{r}\left\vert v_{k}\right\vert _{L^{\eta
}(B_{R}(0))}^{2}+\left\vert V\right\vert _{L^{N/2}(\mathbb{R}^{N}%
\smallsetminus B_{R}(0))}\left\Vert v_{k}\right\Vert ^{2}\leq C\varepsilon
\end{align*}
for $k$ large enough. It follows that%
\[
\Vert u_{k}\Vert_{V}^{2}=\Vert u_{k}-u\Vert_{V}^{2}+\Vert u\Vert_{V}%
^{2}+o(1)=\Vert u_{k}-u\Vert^{2}+\Vert u\Vert_{V}^{2}+o(1).
\]

(b): \ Let $s,t\in\mathbb{R}.$ By the mean value theorem, there exists
$\zeta\in(0,1)$ such that%
\begin{align}
\left\vert f(s+t)-f(s)\right\vert  &  =\left\vert f^{\prime}(s+\zeta
t)\right\vert \left\vert t\right\vert \leq A_{1}\min\{\left\vert s+\zeta
t\right\vert ^{p-2},\left\vert s+\zeta t\right\vert ^{q-2}\}\left\vert
t\right\vert \label{eq:f}\\
&  \leq A_{1}\min\{\left(  \left\vert s\right\vert +\left\vert t\right\vert
\right)  ^{p-2},\left(  \left\vert s\right\vert +\left\vert t\right\vert
\right)  ^{q-2}\}\left\vert t\right\vert \nonumber\\
&  =h(\left\vert s\right\vert +\left\vert t\right\vert )\left\vert
t\right\vert ,\nonumber
\end{align}
where $h(s):=A_{1}\min\{\left\vert s\right\vert ^{p-2},\left\vert s\right\vert
^{q-2}\}.$ Applying Proposition \ref{prop:bpr} to this function, we get that
$\{h(\left\vert u\right\vert +\left\vert u_{k}-u\right\vert )\}$ is bounded in
$L^{p/(p-2)}(\mathbb{R}^{N}).$ So, as $u_{k}\rightarrow u$ in $L_{loc}%
^{p}(\mathbb{R}^{N})$, we conclude that%
\begin{align*}
\int_{\mathbb{R}^{N}}\left\vert f(u_{k})-f(u)\right\vert \left\vert
\varphi\right\vert  &  \leq\int_{\mathbb{R}^{N}}h(\left\vert u\right\vert
+\left\vert u_{k}-u\right\vert )\left\vert u_{k}-u\right\vert \left\vert
\varphi\right\vert \\
&  \leq\left\vert h(\left\vert u\right\vert +\left\vert u_{k}-u\right\vert
)\right\vert _{\frac{p}{p-2}}\left\vert \varphi\right\vert _{p}\left(
\int_{\text{supp}(\varphi)}\left\vert u_{k}-u\right\vert ^{p}\right)
^{1/p}=o(1).
\end{align*}

(c): \ Arguing as in (b), we have that%
\[
\left\vert F(s+t)-F(s)\right\vert \leq H(\left\vert s\right\vert +\left\vert
t\right\vert )\left\vert t\right\vert
\]
for all $s,t\in\mathbb{R},$ where $H(s):=A_{1}\min\{\left\vert s\right\vert
^{p-1},\left\vert s\right\vert ^{q-1}\}.$ Let $\varepsilon>0$ and set
$v_{k}:=u_{k}-u.$ Then, noting that $\left\vert F(s)\right\vert \leq
A_{1}\left\vert s\right\vert ^{2^{\ast}}$ and using Proposition \ref{prop:bpr}%
, we may choose $R>1$ such that
\begin{align*}
&  \int_{\left\vert x\right\vert >R}\left\vert F(u_{k})-F(v_{k}%
)-F(u)\right\vert \leq\int_{\left\vert x\right\vert >R}\left\vert
F(u_{k})-F(v_{k})\right\vert +\int_{\left\vert x\right\vert >R}\left\vert
F(u)\right\vert \\
&  \qquad\leq\int_{\left\vert x\right\vert >R}H(\left\vert v_{k}\right\vert
+\left\vert u\right\vert )\left\vert u\right\vert +A_{1}\int_{\left\vert
x\right\vert >R}\left\vert u\right\vert ^{2^{\ast}}\\
&  \qquad\leq\left\vert H(\left\vert v_{k}\right\vert +\left\vert u\right\vert
)\right\vert _{2^{\ast}/(2^{\ast}-1)}\left(  \int_{\left\vert x\right\vert
>R}\left\vert u\right\vert ^{2^{\ast}}\right)  ^{1/2^{\ast}}+A_{1}%
\int_{\left\vert x\right\vert >R}\left\vert u\right\vert ^{2^{\ast}}\\
&  \qquad\leq C\left(  \int_{\left\vert x\right\vert >R}\left\vert
u\right\vert ^{2^{\ast}}\right)  ^{1/2^{\ast}}+A_{1}\int_{\left\vert
x\right\vert >R}\left\vert u\right\vert ^{2^{\ast}}<\varepsilon.
\end{align*}
On the other hand, as $u_{k}\rightarrow u$ in $L_{loc}^{p}(\mathbb{R}^{N})$,
we have that%
\begin{align*}
&  \int_{\left\vert x\right\vert \leq R}\left\vert F(u_{k})-F(v_{k}%
)-F(u)\right\vert \\
&  \qquad\leq\int_{\left\vert x\right\vert \leq R}\left\vert F(u_{k}%
)-F(u)\right\vert +\int_{1\leq\left\vert x\right\vert \leq R}\left\vert
F(v_{k})\right\vert +\int_{\left\vert x\right\vert \leq1}\left\vert
F(v_{k})\right\vert \\
&  \qquad\leq\int_{\left\vert x\right\vert \leq R}H(\left\vert u_{k}%
\right\vert +\left\vert u\right\vert )\left\vert v_{k}\right\vert +A_{1}%
\int_{1\leq\left\vert x\right\vert \leq R}\left\vert v_{k}\right\vert
^{p}+A_{1}\int_{\left\vert x\right\vert \leq1}\left\vert v_{k}\right\vert
^{q}\\
&  \qquad\leq\left\vert H(\left\vert u_{k}\right\vert +\left\vert u\right\vert
)\right\vert _{p/(p-1)}\left(  \int_{\left\vert x\right\vert \leq R}\left\vert
v_{k}\right\vert ^{p}\right)  ^{1/p}+C\int_{\left\vert x\right\vert \leq
R}\left\vert v_{k}\right\vert ^{p}\\
&  \qquad\leq C\left(  \int_{\left\vert x\right\vert \leq R}\left\vert
v_{k}\right\vert ^{p}\right)  ^{1/p}+C\int_{\left\vert x\right\vert \leq
R}\left\vert v_{k}\right\vert ^{p}<\varepsilon
\end{align*}
if $k$ is large enough. This proves (c).

(d): \ Let $\varphi\in\mathcal{C}_{c}^{\infty}(\mathbb{R}^{N})$ and $R>0.$ Set
$h(s):=A_{1}\min\{\left\vert s\right\vert ^{p-2},\left\vert s\right\vert
^{q-2}\}.$ From (\ref{eq:f}) and Proposition \ref{prop:bpr} we get that%
\begin{align*}
&  \int_{\left\vert x\right\vert >R}\left\vert f(u_{k})-f(u_{k}-u)\right\vert
\left\vert \varphi\right\vert \leq\int_{\left\vert x\right\vert >R}%
h(\left\vert u_{k}\right\vert +\left\vert u\right\vert )\left\vert
u\right\vert \left\vert \varphi\right\vert \\
&  \leq\left\vert h(\left\vert u_{k}\right\vert +\left\vert u\right\vert
)\right\vert _{2^{\ast}/(2^{\ast}-2)}\left(  \int_{\left\vert x\right\vert
>R}\left\vert u\right\vert ^{2^{\ast}}\right)  ^{1/2^{\ast}}\left\vert
\varphi\right\vert _{2^{\ast}}\leq C\left(  \int_{\left\vert x\right\vert
>R}\left\vert u\right\vert ^{2^{\ast}}\right)  ^{1/2^{\ast}}\left\Vert
\varphi\right\Vert .
\end{align*}
Moreover, as $\left\vert f(u)\right\vert \leq A_{1}\left\vert u\right\vert
^{2^{\ast}-1},$ we have that%
\[
\int_{\left\vert x\right\vert >R}\left\vert f(u)\right\vert \left\vert
\varphi\right\vert \leq C\left(  \int_{\left\vert x\right\vert >R}\left\vert
u\right\vert ^{2^{\ast}}\right)  ^{(2^{\ast}-1)/2^{\ast}}\left\Vert
\varphi\right\Vert .
\]
Thus, given $\varepsilon>0$, we may choose $R>0$ large enough so that%
\[
\int_{\left\vert x\right\vert >R}\left\vert f(u_{k})-f(u_{k}%
-u)-f(u)\right\vert \left\vert \varphi\right\vert \leq\varepsilon\left\Vert
\varphi\right\Vert .
\]
Next, we fix $\delta\in(0,1)$ such that $\eta:=2^{\ast}\delta\in(p,2^{\ast})$
and $\nu:=\frac{\eta}{\eta-1-\delta}\in\left[  \frac{q}{q-2},\frac{p}%
{p-2}\right]  .$ As $u_{k}\rightarrow u$ strongly in $L_{loc}^{\eta
}(\mathbb{R}^{N}),$ from (\ref{eq:f}) and Proposition \ref{prop:bpr} we get
that%
\[
\int_{\left\vert x\right\vert \leq R}\left\vert f(u_{k})-f(u)\right\vert
\left\vert \varphi\right\vert \leq\left\vert h(\left\vert u\right\vert
+\left\vert u_{k}-u\right\vert )\right\vert _{\nu}\left(  \int_{\left\vert
x\right\vert \leq R}\left\vert u_{k}-u\right\vert ^{\eta}\right)  ^{1/\eta
}\left\vert \varphi\right\vert _{2^{\ast}}\leq\varepsilon\left\Vert
\varphi\right\Vert
\]
for $k$ large enough and, similarly,%
\[
\int_{\left\vert x\right\vert \leq R}\left\vert f(u_{k}-u)\right\vert
\left\vert \varphi\right\vert \leq\left\vert h(\left\vert u_{k}-u\right\vert
)\right\vert _{\nu}\left(  \int_{\left\vert x\right\vert \leq R}\left\vert
u_{k}-u\right\vert ^{\eta}\right)  ^{1/\eta}\left\vert \varphi\right\vert
_{2^{\ast}}\leq\varepsilon\left\Vert \varphi\right\Vert .
\]
Therefore,%
\[
\left\vert \int_{\mathbb{R}^{N}}\left(  f(u_{k})-f(u_{k}-u)-f(u)\right)
\varphi\right\vert \leq\varepsilon\left\Vert \varphi\right\Vert \text{\qquad
for }k\text{ large enough.}%
\]
This proves the claim.
\end{proof}

The following lemma is stated in \cite{bgm1} but its proof contains a gap.
This can be fixed with the help of Lemma \ref{lem:lions}. We give the details.

\begin{lemma}
[Splitting lemma]\label{lem:split}Let $(u_{k})$ be a bounded $(PS)_{d}%
$-sequence for $I_{V}.$ Then, after replacing $(u_{k})$ by a subsequence,
there exists a solution $u$ of problem \emph{(\ref{prob})}, a number
$m\in\mathbb{N}\cup\left\{  0\right\}  $, $m$ nontrivial solutions
$w_{1},...,w_{m}$ to the limit problem \emph{(\ref{pinf}) }and $m$ sequences
of points $(y_{j,k})\in\mathbb{R}^{N},$ $1\leq j\leq m$, satisfying

\begin{itemize}
\item[(i)] $|y_{j,k}|\rightarrow\infty,$ \ and $|y_{j,k}-y_{i,k}%
|\rightarrow\infty$ if $i\neq j$,\medskip

\item[(ii)] $u_{k}-\sum_{i=1}^{m}w_{i}(\cdot-y_{i,k})\rightarrow u$ \ in
$\ D^{1,2}(\mathbb{R}^{N})$,\medskip

\item[(iii)] $d=I_{V}(u)+\sum_{i=1}^{m}I_{0}(w_{i})$.
\end{itemize}
\end{lemma}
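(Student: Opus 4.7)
The proof follows the standard iterative bubble-extraction scheme of Struwe, with Lemma \ref{lem:lions} replacing the classical Lions vanishing lemma. The plan is: extract a weak limit $u$, check it solves $(\wp_V)$, and then peel off bubbles at infinity one at a time until the residual vanishes strongly.

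\textbf{Step 1 (the weak limit).} Since $(u_k)$ is bounded in $D^{1,2}(\mathbb{R}^N)$, I pass to a subsequence with $u_k\rightharpoonup u$ weakly. For any $\varphi\in\mathcal{C}_c^\infty(\mathbb{R}^N)$, Lemma \ref{lem:bl}(b) gives $\int f(u_k)\varphi\to\int f(u)\varphi$, while the weighted term $\int V u_k\varphi\to\int V u\varphi$ follows from $V\in L^{N/2}\cap L^r$ and local compactness. Combined with $I_V'(u_k)\varphi\to 0$, this shows $I_V'(u)=0$, so $u$ solves $(\wp_V)$.

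\textbf{Step 2 (reduction to the limit functional).} Set $v_k^1:=u_k-u$, so $v_k^1\rightharpoonup 0$. Lemma \ref{lem:bl}(a) and (c) together with $\int V(v_k^1)^2\to 0$ (argued as in Lemma \ref{lem:bl}(a)) yield $I_V(u_k)=I_0(v_k^1)+I_V(u)+o(1)$, hence $I_0(v_k^1)\to d-I_V(u)=:d_1$. Similarly, Lemma \ref{lem:bl}(d) implies $I_0'(v_k^1)=I_V'(u_k)-I_V'(u)+o(1)\to 0$ in $(D^{1,2})'$, so $(v_k^1)$ is a bounded $(PS)_{d_1}$-sequence for $I_0$.

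\textbf{Step 3 (iterative bubble extraction).} Suppose $v_k^j$ is a bounded $(PS)_{d_j}$-sequence for $I_0$. If $v_k^j\to 0$ strongly, stop. Otherwise, $d_j>0$: indeed, if the concentration function $\sup_{y}\int_{B_1(y)}|v_k^j|^2\to 0$, Lemma \ref{lem:lions} gives $\int f(v_k^j)v_k^j\to 0$, so from $J_0(v_k^j)=I_0'(v_k^j)v_k^j=o(1)$ we get $\|v_k^j\|\to 0$, whence by Lemma \ref{lem:bl}(c) also $I_0(v_k^j)\to 0$, contradicting the strong non-convergence when $d_j>0$; if $d_j\leq 0$ but $v_k^j\not\to 0$, Lemma \ref{lem:aproxPS} shows $\|v_k^j\|$ stays bounded away from $0$, again forcing non-vanishing of the concentration function. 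Choose $y_{j,k}$ with $\int_{B_1(y_{j,k})}|v_k^j|^2>\delta>0$; since $v_k^j\rightharpoonup 0$ and (by inductive separation) $v_k^j(\cdot+y_{i,k})\rightharpoonup 0$ for each previously chosen $i<j$, one concludes $|y_{j,k}|\to\infty$ and $|y_{j,k}-y_{i,k}|\to\infty$. Extracting a further subsequence, $\tilde v_k^j:=v_k^j(\cdot+y_{j,k})\rightharpoonup w_j\neq 0$, and by translation invariance of $I_0$ together with Lemma \ref{lem:bl}(d), $w_j$ solves $(\wp_0)$. Set $v_k^{j+1}:=v_k^j-w_j(\cdot-y_{j,k})$; applying Lemma \ref{lem:bl} after the translation yields that $(v_k^{j+1})$ is a bounded $(PS)_{d_j-I_0(w_j)}$-sequence for $I_0$.

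\textbf{Step 4 (termination).} Each extracted $w_j$ is a nontrivial critical point of $I_0$, hence $I_0(w_j)\geq c_0>0$ by Lemma \ref{lem:nehari}(c). Since the energies $d_j$ are non-negative (they are limits of $I_0(v_k^j)$ which are non-negative up to $o(1)$ on account of $(f2)$ and $J_0(v_k^j)\to 0$), the process must stop after some finite $m\leq d/c_0$ steps, at which point $v_k^{m+1}\to 0$ strongly. Unwinding the definitions gives (ii) and (iii), while (i) has been established along the way. \textbf{The main obstacle} is verifying, at the induction step, that the subtracted sequence $v_k^{j+1}$ is again a $(PS)$-sequence for $I_0$ with the correct energy level; this is where the new vanishing lemma (Lemma \ref{lem:lions}) is essential, since the classical $H^1$-version is unavailable in the zero-mass setting, and it is applied in conjunction with the Brezis--Lieb-type decompositions of Lemma \ref{lem:bl}(c),(d).
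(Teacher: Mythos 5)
Your proposal is correct and follows essentially the same route as the paper: weak limit extraction, the Brezis--Lieb-type decompositions of Lemma \ref{lem:bl}, iterative bubble extraction driven by the $D^{1,2}$-version of Lions' vanishing lemma (Lemma \ref{lem:lions}), and termination by energy quantization. The only blemish is cosmetic: your appeal to Lemma \ref{lem:aproxPS} in the case $d_j\le 0$ is not quite licensed (that lemma assumes the energy level is positive), but the fact you need --- that a non-strongly-convergent residual with $I_0'(v_k^j)v_k^j\to 0$ has $\int_{\mathbb{R}^N} f(v_k^j)v_k^j=\|v_k^j\|^2+o(1)$ bounded away from zero along a subsequence, so that Lemma \ref{lem:lions} rules out vanishing --- follows directly, exactly as in the paper's proof.
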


\begin{proof}
Passing to a subsequence, we have that $u_{k}\rightharpoonup u$ weakly in
$D^{1,2}(\mathbb{R}^{N}).$ It follows from Lemma \ref{lem:bl} that%
\begin{align*}
o(1)  &  =I_{V}^{\prime}(u_{k})\varphi=\left\langle u_{k},\varphi\right\rangle
_{V}-\int_{\mathbb{R}^{N}}f(u_{k})\varphi\\
&  =\left\langle u,\varphi\right\rangle _{V}-\int_{\mathbb{R}^{N}}%
f(u)\varphi+o(1)=I_{V}^{\prime}(u)\varphi+o(1)
\end{align*}
for every $\varphi\in\mathcal{C}_{c}^{\infty}(\mathbb{R}^{N}).$ Hence, $u$
solves (\ref{prob})\emph{.} Set $u_{1,k}:=u_{k}-u$. Then, $u_{1,k}%
\rightharpoonup0$ weakly in $D^{1,2}(\mathbb{R}^{N}).$ Moreover, Lemma
\ref{lem:bl} implies that%
\begin{equation}
\left.
\begin{array}
[c]{l}%
I_{V}(u_{k})=I_{0}(u_{1,k})+I_{V}(u)+o(1),\\
o(1)=I_{V}^{\prime}(u_{k})=I_{0}^{\prime}(u_{1,k})+o(1)\text{ \ in }\left(
{D^{1,2}(\mathbb{R}^{N})}\right)  ^{\prime}.
\end{array}
\right.  \label{eq:split1}%
\end{equation}
If $u_{1,k}\rightarrow0$ strongly in $D^{1,2}(\mathbb{R}^{N}),$ the proof is
finished. So assume it does not. Then, as $I_{V}^{\prime}(u_{1,k}%
)u_{1,k}\rightarrow0,$ after passing to a subsequence, we have that
\[
0<C\leq\Vert u_{1,k}\Vert_{V}^{2}=\int_{\mathbb{R}^{N}}f(u_{1,k}%
)u_{1,k}+o(1).
\]
So, by Lemma \ref{lem:lions}, there exist $\delta>0$ and a sequence
$(y_{1,k})$ in $\mathbb{R}^{N}$ such that%
\begin{equation}
\int_{B_{1}(y_{1,k})}|u_{1,k}|^{2}=\sup_{y\in\mathbb{R}^{N}}\int_{B_{1}%
(y)}|u_{1,k}|^{2}>\delta. \label{eq:nontrivial2}%
\end{equation}
Set $v_{k}:=u_{1,k}(\,\cdot\,+y_{1,k})$. Passing to a subsequence,\ we have
that $v_{k}\rightharpoonup w_{1}$ weakly in $D^{1,2}(\mathbb{R}^{N})$ and
$v_{k}\rightarrow w_{1}$ in $L_{loc}^{2}(\mathbb{R}^{N})$. The inequality
(\ref{eq:nontrivial2}) implies that $w_{1}\neq0$ and, as $u_{1,k}%
\rightharpoonup0$ weakly in $D^{1,2}(\mathbb{R}^{N}),$ we conclude that
$\left\vert y_{1,k}\right\vert \rightarrow\infty.$ Next, we show that $w_{1}$
is a solution to the limit problem (\ref{pinf}). Let $\varphi\in
\mathcal{C}_{c}^{\infty}(\mathbb{R}^{N})$ and set $\varphi_{k}:=\varphi
(\,\cdot\,-y_{1,k}).$ Using Lemma \ref{lem:bl} and performing a change of
variable, we obtain that
\[
I_{0}^{\prime}(w_{1})\varphi+o(1)=I_{0}^{\prime}(v_{k})\varphi=I_{0}^{\prime
}(u_{1,k})\varphi_{k}=o(1).
\]
This proves that $w_{1}$ solves the problem (\ref{pinf}). Moreover, Lemma
\ref{lem:bl} implies that%
\begin{align*}
I_{0}(v_{k})  &  =I_{0}(v_{k}-w_{1})+I_{0}(w_{1})+o(1),\\
o(1)  &  =I_{V}^{\prime}(v_{k})=I_{0}^{\prime}(v_{k}-w_{1})+o(1)\text{ \ in
}\left(  {D^{1,2}(\mathbb{R}^{N})}\right)  ^{\prime}.
\end{align*}
Set $u_{2,k}:=u_{1,k}-w_{1}(\,\cdot\,-y_{1,k})=u_{k}-u-w_{1}(\,\cdot
\,-y_{1,k}).$ Then, $u_{2,k}\rightharpoonup0$ weakly in $D^{1,2}%
(\mathbb{R}^{N})$ and, after a change of variable, from the identities
(\ref{eq:split1}) and (\ref{eq:split2}) we obtain that%
\begin{equation}
\left.
\begin{array}
[c]{l}%
I_{0}(u_{2,k})=I_{0}(u_{1,k})-I_{0}(w_{1})+o(1)=I_{V}(u_{k})-I_{V}%
(u)-I_{0}(w_{1})+o(1),\\
I_{0}^{\prime}(u_{2,k})=I_{0}^{\prime}(u_{1,k})+o(1)=I_{V}^{\prime}%
(u_{k})+o(1)=o(1)\text{ \ in }\left(  {D^{1,2}(\mathbb{R}^{N})}\right)
^{\prime}.
\end{array}
\right.  \label{eq:split2}%
\end{equation}
If $u_{2,k}\rightarrow0$ strongly in $D^{1,2}(\mathbb{R}^{N}),$ the proof is
finished. If not, we repeat the argument. After a finite number of steps, we
will arrive to a sequence $(u_{m+1,k})$ which converges strongly to $0$ in
$D^{1,2}(\mathbb{R}^{N}).$ This finishes the proof.
\end{proof}

\begin{corollary}
[Compactness]\label{cor:compactness}If $c_{V}$ is not attained by $I_{V}$ on
$\mathcal{N}_{V}$, then the following statements hold true.

\begin{enumerate}
\item[(a)] $c_{V}\geq c_{0}.$

\item[(b)] If $\sigma_{k}\in\mathbb{R}$ and $u_{k}\in\mathcal{M}_{\sigma_{k}}$
are such that $\sigma_{k}\rightarrow0,$ $I_{V}(u_{k})\rightarrow d\in
(c_{0},2c_{0})$ and $\nabla_{\mathcal{M}_{\sigma_{k}}}I_{V}(u_{k}%
)\rightarrow0$, then $(u_{k})$ contains a convergent subsequence.
\end{enumerate}
\end{corollary}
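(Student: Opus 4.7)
For part (a), the plan is to produce a Palais--Smale sequence at level $c_V$ by applying Ekeland's variational principle on the $\mathcal{C}^1$-manifold $\mathcal{N}_V$ (which is a natural constraint by Lemma \ref{lem:nehari}(b)). This gives a minimizing sequence $u_k\in\mathcal{N}_V=\mathcal{M}_0$ with $I_V(u_k)\to c_V>0$ and $\nabla_{\mathcal{N}_V}I_V(u_k)\to 0$. Lemma \ref{lem:PS} (with $\sigma_k\equiv 0$) then upgrades this to a genuine $(PS)_{c_V}$-sequence for $I_V$ on $D^{1,2}(\mathbb{R}^N)$, and Lemma \ref{lem:aproxPS} gives boundedness. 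The splitting lemma then yields a solution $u$ of (\ref{prob}), nontrivial solutions $w_1,\dots,w_m$ of (\ref{pinf}), and the decomposition $c_V=I_V(u)+\sum_{i=1}^m I_0(w_i)$. If $u\neq 0$, then $u\in\mathcal{N}_V$ so $I_V(u)\geq c_V$, and since each $I_0(w_i)\geq c_0>0$ we would be forced to have $m=0$ and $I_V(u)=c_V$, contradicting non-attainment. Hence $u=0$, and $c_V=\sum_{i=1}^m I_0(w_i)\geq mc_0$ with $m\geq 1$ (since $c_V>0$), proving $c_V\geq c_0$.

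For part (b), the strategy is the same splitting analysis, but now combined with the sharp energy interval $d\in(c_0,2c_0)$ together with part (a) and the rigidity statement of Lemma \ref{nonexistence}. First, since $J_V(u_k)=\sigma_k\to 0$ and $I_V(u_k)\to d>0$, Lemma \ref{lem:aproxPS} gives boundedness; by Lemma \ref{lem:PS}, $(u_k)$ is a $(PS)_d$-sequence for $I_V$. Apply the splitting lemma to get $u$, $w_1,\dots,w_m$, $(y_{i,k})$ with
\[
u_k-\sum_{i=1}^m w_i(\,\cdot\,-y_{i,k})\longrightarrow u\quad\text{in }D^{1,2}(\mathbb{R}^N),\qquad d=I_V(u)+\sum_{i=1}^m I_0(w_i).
\]

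The main task is to rule out every outcome of the splitting lemma except $m=0$ and $u\neq 0$, which directly gives $u_k\to u$ strongly. I will check four cases. If $u\neq 0$ and $m\geq 1$, then $u\in\mathcal{N}_V$ gives $I_V(u)\geq c_V\geq c_0$ (by part (a)) and $I_0(w_i)\geq c_0$, so $d\geq 2c_0$, a contradiction. If $u=0$ and $m\geq 2$, then $d\geq 2c_0$, again contradicting $d<2c_0$. If $u=0$ and $m=1$, then $d=I_0(w_1)$ with $w_1$ a nontrivial solution of (\ref{pinf}) satisfying $I_0(w_1)\in(c_0,2c_0)$, which is forbidden by Lemma \ref{nonexistence}. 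Finally, $u=0$ and $m=0$ would force $d=0$, contradicting $d>c_0>0$.

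The main obstacle I anticipate is making sure the $u=0$, $m=1$ case really does land inside the open interval $(c_0,2c_0)$ where the nonexistence lemma applies; the endpoint $d=c_0$ is excluded by strict inequality in the hypothesis, and the endpoint $d=2c_0$ is excluded from above, so the exclusion is clean. The other steps are essentially bookkeeping on top of the splitting lemma and the uniqueness/rigidity of solutions to the limit problem encoded in Proposition \ref{prop:limit_problem} and Lemma \ref{nonexistence}.
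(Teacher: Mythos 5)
Your proposal is correct and follows essentially the same route as the paper: Ekeland plus Lemmas \ref{lem:aproxPS} and \ref{lem:PS} to produce bounded $(PS)$-sequences, then the splitting lemma, with the energy bookkeeping $d=I_V(u)+\sum_i I_0(w_i)$, $I_V(u)\geq c_V$ when $u\neq0$, $I_0(w_i)\geq c_0$, and Lemma \ref{nonexistence} to exclude the case $u=0$, $m=1$. The paper compresses this case analysis into one sentence; your version merely makes the same bookkeeping explicit.
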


\begin{proof}
(a): \ Let $(u_{k})$ be a minimizing sequence for $I_{V}$ on $\mathcal{N}%
_{V}.$ By Ekeland's variational principle and Lemma \ref{lem:aproxPS}, we may
assume that $(u_{k})$ is a bounded $(PS)_{c_{V}}$-sequence for $I_{V}$. As
$c_{V}$ is not attained, the splitting lemma implies that $c_{V}\geq c_{0}.$

(b): \ By Lemmas \ref{lem:aproxPS} and Lemma \ref{lem:PS} we have that
$(u_{k})$ is a bounded $(PS)_{d}$-sequence for $I_{V}$. Arguing by
contradiction, assume that $(u_{k})$ does not contain a convergent
subsequence. Then, the splitting lemma yields a solution $w$ of the limit
problem (\ref{pinf})\ with $d=I_{0}(w)$, contradicting Lemma
\ref{nonexistence}. This proves our claim.
\end{proof}

\section{Existence of a positive solution}

\label{sec:existence}The proof of our main result requires some delicate
estimates. The following lemma will help us obtain them.

\begin{lemma}
\label{power}

\begin{enumerate}
\item[(a)] If $y_{0},y\in\mathbb{R}^{N},$ $y_{0}\neq y,$ and $\alpha$ and
$\beta$ are positive constants such that $\alpha+\beta>N$, then there exists
$C_{1}=C_{1}(\alpha,\beta,\lvert y-y_{0}\rvert)>0$ such that
\[
\int_{\mathbb{R}^{N}}\frac{\mathrm{d}x}{(1+|x-Ry_{0}|)^{\alpha}%
(1+|x-Ry|)^{\beta}}\leq C_{1}R^{-\mu}%
\]
for all $R\geq1,$ where $\mu:=\min\{\alpha,\beta,\alpha+\beta-N\}$.

\item[(b)] If $y_{0},y\in\mathbb{R}^{N}\smallsetminus\{0\},$ and $\kappa$ and
$\gamma$ are positive constants such that $\kappa+2\gamma>N$, then there
exists $C_{2}=C_{2}(\kappa,\gamma,\lvert y_{0}\rvert,\lvert y\rvert)>0$ such
that
\[
\int_{\mathbb{R}^{N}}\frac{\mathrm{d}x}{(1+|x|)^{\kappa}(1+|x-Ry_{0}%
|)^{\gamma}(1+|x-Ry|)^{\gamma}}\leq C_{2}R^{-\tau},
\]
for all $R\geq1,$ where $\tau:=\min\{\kappa,2\gamma,\kappa+2\gamma-N\}$.
\end{enumerate}
\end{lemma}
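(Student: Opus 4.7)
The plan is to prove both statements by classical ``interaction of bumps'' arguments that localise the integrals to regions on which one or two of the weight factors are essentially constant. Part (a) will be handled by a three-region decomposition centred at $Ry_{0}$ and $Ry$, and part (b) will be reduced to part (a) by an elementary pointwise inequality.

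For part (a), I would set $r:=\tfrac{R}{2}|y-y_{0}|$ and decompose $\mathbb{R}^{N}=A\cup B\cup C$ with
\[
A:=\{|x-Ry_{0}|\leq r\},\qquad B:=\{|x-Ry|\leq r\},\qquad C:=\mathbb{R}^{N}\smallsetminus(A\cup B).
\]
Since $|Ry_{0}-Ry|=2r$, the sets $A$ and $B$ are disjoint, and on $A$ the triangle inequality gives $|x-Ry|\geq r$. Hence $(1+|x-Ry|)^{-\beta}$ contributes a factor of order $r^{-\beta}\sim R^{-\beta}$ on $A$, while the remaining integral is $\int_{|z|\leq r}(1+|z|)^{-\alpha}\,\mathrm{d}z$, which is $O(1)$ when $\alpha>N$ and $O(r^{N-\alpha})$ when $\alpha<N$. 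In either case $\int_{A}\leq C\,R^{-\min\{\beta,\,\alpha+\beta-N\}}$, and the bound on $\int_{B}$ is symmetric. On $C$, after substituting $z:=x-Ry_{0}$ and setting $a:=R(y-y_{0})$ so that $|a|=2r$, I would split once more into $\{|z|>2|a|\}$, where $|z-a|\geq|z|/2$ controls the integrand by $(1+|z|)^{-\alpha-\beta}$ and the hypothesis $\alpha+\beta>N$ yields $C\,R^{N-\alpha-\beta}$, and $\{r<|z|\leq 2|a|\}$, where $(1+|z|)^{-\alpha}$ is of order $R^{-\alpha}$ and the remaining mass sits in the shell $\{r<|z-a|\leq 6r\}$, again producing $C\,R^{N-\alpha-\beta}$. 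Combining the three pieces yields the announced exponent $\mu=\min\{\alpha,\beta,\alpha+\beta-N\}$.

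For part (b), my plan is to apply the trivial bound $ab\leq\tfrac{1}{2}(a^{2}+b^{2})$ with $a=(1+|x-Ry_{0}|)^{-\gamma}$ and $b=(1+|x-Ry|)^{-\gamma}$, giving
\[
(1+|x-Ry_{0}|)^{-\gamma}(1+|x-Ry|)^{-\gamma}\leq\tfrac{1}{2}\bigl[(1+|x-Ry_{0}|)^{-2\gamma}+(1+|x-Ry|)^{-2\gamma}\bigr].
\]
Inserting this into the triple-weight integral splits it into two double-weight integrals of precisely the form handled by part (a), with centres $0$ and $Ry_{0}$ (respectively $0$ and $Ry$) and exponents $\kappa$ and $2\gamma$. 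The assumption $\kappa+2\gamma>N$ is exactly what part (a) requires, so each piece is at most $C\,R^{-\min\{\kappa,2\gamma,\kappa+2\gamma-N\}}=C\,R^{-\tau}$; the assumption $y_{0},y\in\mathbb{R}^{N}\smallsetminus\{0\}$ guarantees that the two centres are distinct so that part (a) applies.

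The only obstacle I foresee is the logarithmic boundary case in part (a), where $\alpha=N$ or $\beta=N$ makes $\int_{|z|\leq r}(1+|z|)^{-\alpha}\,\mathrm{d}z$ pick up a factor $\log r$. This is harmless in the applications that follow, and in any event can be absorbed by slightly decreasing $\mu$; an analogous remark covers the critical exponents in part (b). Beyond this, the only book-keeping needed is to check that the constants $C_{1}$ and $C_{2}$ depend only on the parameters claimed, which they do because every length appearing in the argument is either $R$-independent or scales linearly in $R$ once $|y-y_{0}|$, $|y_{0}|$, $|y|$ are fixed.
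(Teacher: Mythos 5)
Your proof is correct and follows essentially the same route as the paper's: the same two-ball-plus-exterior decomposition for part (a) (the paper splits the exterior region by the perpendicular bisector of $Ry_0$ and $Ry$ rather than by $|z|\lessgtr 2|a|$, but the resulting estimates are identical), and a reduction of part (b) to two instances of part (a) with centres $0$ and $Ry_0$, resp.\ $0$ and $Ry$ (you use $ab\le\tfrac12(a^2+b^2)$ where the paper uses the Cauchy--Schwarz inequality, an interchangeable step). The logarithmic boundary cases $\alpha=N$ or $\beta=N$ that you flag are equally present and unaddressed in the paper's own proof and do not occur in its applications, so they are not a gap relative to the paper.
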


\begin{proof}
(a): \ After a suitable translation, we may assume that $y=-y_{0}.$ Let
$2\rho:=\lvert y-y_{0}\rvert>0.$ In the following, $C$ will denote different
positive constants which depend on $\alpha$, $\beta$ and $\rho.$ If
$\left\vert x-Ry_{0}\right\vert \leq\rho R,$ then $\left\vert x-Ry\right\vert
\geq\rho R.$ Hence
\begin{align*}
&  \int_{B_{\rho R}(Ry_{0})}\frac{\mathrm{d}x}{(1+|x-Ry_{0}|)^{\alpha
}(1+|x-Ry|)^{\beta}}=\int_{B_{\rho R}(Ry_{0})}\frac{\mathrm{d}x}%
{(1+|x-Ry_{0}|)^{\alpha}(\rho R)^{\beta}}\\
&  =CR^{-\beta}\int_{B_{\rho R}(0)}\frac{\mathrm{d}x}{(1+|x|)^{\alpha}}\leq
C(R^{-\beta}+R^{N-(\alpha+\beta)})\leq CR^{-\mu}.
\end{align*}
Similarly,
\[
\int_{B_{\rho R}(Ry)}\frac{\mathrm{d}x}{(1+|x-Ry_{0}|)^{\alpha}%
(1+|x-Ry|)^{\beta}}\leq C(R^{-\alpha}+R^{N-(\alpha+\beta)})\leq CR^{-\mu}.
\]
Let%
\begin{align*}
H^{+}  &  :=\{z\in\mathbb{R}^{N}:\left\vert z-Ry\right\vert \geq\left\vert
z-Ry_{0}\right\vert \},\\
H^{-}  &  :=\{z\in\mathbb{R}^{N}:\left\vert z-Ry\right\vert \leq\left\vert
z-Ry_{0}\right\vert \}.
\end{align*}
Setting $x=Rz$ we obtain%
\begin{align*}
&  \int_{H^{+}\smallsetminus B_{\rho R}(Ry_{0})}\frac{\mathrm{d}%
x}{(1+|x-Ry_{0}|)^{\alpha}(1+|x-Ry|)^{\beta}}\\
&  \leq\int_{H^{+}\smallsetminus B_{\rho R}(Ry_{0})}\frac{\mathrm{d}%
x}{(1+|x-Ry_{0}|)^{\alpha+\beta}}\\
&  \leq\int_{H^{+}\smallsetminus B_{\rho}(y_{0})}\frac{R^{N}\mathrm{d}%
x}{(R|z-y_{0}|)^{\alpha+\beta}}=CR^{N-(\alpha+\beta)}\leq CR^{-\mu}.
\end{align*}
Similarly,%
\[
\int_{H^{-}\smallsetminus B_{\rho R}(Ry)}\frac{\mathrm{d}x}{(1+|x-Ry_{0}%
|)^{\alpha}(1+|x-Ry|)^{\beta}}\leq CR^{-\mu}.
\]
Since $\mathbb{R}^{N}\smallsetminus\left[  B_{\rho R}(Ry_{0})\cup B_{\rho
R}(Ry)\right]  =\left[  H^{+}\smallsetminus B_{\rho R}(Ry_{0})\right]
\cup\left[  H^{-}\smallsetminus B_{\rho R}(Ry)\right]  ,$ the previous
estimates yield (a).

(b): \ From H\"{o}lder's inequality and inequality (a)\ we obtain%
\begin{align*}
&  \int_{\mathbb{R}^{N}}\frac{\mathrm{d}x}{(1+|x|)^{\kappa}(1+|x-Ry_{0}%
|)^{\gamma}(1+|x-Ry|)^{\gamma}}\\
&  \leq\left(  \int_{\mathbb{R}^{N}}\frac{\mathrm{d}x}{(1+|x|)^{\kappa
}(1+|x-Ry_{0}|)^{2\gamma}}\right)  ^{1/2}\left(  \int_{\mathbb{R}^{N}}%
\frac{\mathrm{d}x}{(1+|x|)^{\kappa}(1+|x-Ry|)^{2\gamma}}\right)  ^{1/2}\\
&  \leq C_{2}R^{-\tau},
\end{align*}
as claimed.
\end{proof}

Let $\omega$ be the positive radial ground state of the limit problem
(\ref{pinf}). Fix $y_{0}\in\mathbb{R}^{N}$ with $\left\vert y_{0}\right\vert
=1$, and let $B_{2}(y_{0}):=\{x\in\mathbb{R}^{N}:\left\vert x-y_{0}\right\vert
\leq2\}$. For $R\geq1$ and each $y\in\partial B_{2}(y_{0})$, we define%
\[
\omega_{0}^{R}:=\omega(\cdot-Ry_{0}),\qquad\omega_{y}^{R}:=\omega(\cdot-Ry),
\]
and we set
\[
\varepsilon_{R}:=\int_{\mathbb{R}^{N}}f(\omega_{0}^{R})\,\omega_{y}^{R}%
=\int_{\mathbb{R}^{N}}f(\omega(x-Ry_{0}))\,\omega(x-Ry)\,\mathrm{d}x.
\]
As before, $C$ will denote a positive constant, not necessarily the same one.

\begin{lemma}
\label{upperbound} There exists a constant $C_{3}>0$ such that
\[
\varepsilon_{R}\leq C_{3}R^{-(N-2)}%
\]
for all $y\in\partial B_{2}(y_{0})$ and all $R\geq1$.
\end{lemma}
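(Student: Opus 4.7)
The plan is to reduce the claim to a single application of Lemma \ref{power}(a). The first step is to majorize the integrand by a product of pure powers. From the upper decay estimate in \eqref{decay}, I have $\omega_y^R(x) \leq A_3(1+|x-Ry|)^{-(N-2)}$, and the universal consequence of $(f1)$ noted in Section \ref{sec:limprob}, namely $|f(s)| \leq A_1|s|^{2^{\ast}-1}$, combined with \eqref{decay} applied to $\omega_0^R$, gives
\begin{equation*}
f(\omega_0^R(x)) \leq A_1 A_3^{2^{\ast}-1}(1+|x-Ry_0|)^{-(2^{\ast}-1)(N-2)} = C(1+|x-Ry_0|)^{-(N+2)},
\end{equation*}
since $(2^{\ast}-1)(N-2) = N+2$. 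Putting these two estimates together,
\begin{equation*}
\varepsilon_R \leq C \int_{\mathbb{R}^N} \frac{\mathrm{d}x}{(1+|x-Ry_0|)^{N+2}(1+|x-Ry|)^{N-2}}.
\end{equation*}

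The second step is to apply Lemma \ref{power}(a) with $\alpha := N+2$ and $\beta := N-2$. The hypothesis $\alpha + \beta = 2N > N$ is trivially satisfied, and the three candidates for $\mu$ are $\alpha = N+2$, $\beta = N-2$, and $\alpha + \beta - N = N$, whose minimum is $N-2$. This is exactly the decay rate asserted by the lemma.

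The last point to verify is uniformity of the constant in $y$. The constant $C_1(\alpha,\beta,|y-y_0|)$ provided by Lemma \ref{power}(a) depends on the separation $|y-y_0|$, but every $y\in\partial B_2(y_0)$ satisfies $|y-y_0|=2$, and the exponents $\alpha,\beta$ are fixed; so a single $C_3$ works for all $y\in\partial B_2(y_0)$ and all $R\geq 1$. There is no serious obstacle in this proof; the only things that need checking are the arithmetic identity $(2^{\ast}-1)(N-2)=N+2$ and that the minimum in $\mu$ is attained by the $\beta$-term, both of which are immediate. (A slightly sharper piecewise bound, splitting on $\{\omega_0^R\leq 1\}$ versus $\{\omega_0^R\geq 1\}$ and using the two branches of $(f1)$ separately, would give the same rate, since $q>2^{\ast}$ forces the minimum in Lemma \ref{power}(a) to be $N-2$ either way.)
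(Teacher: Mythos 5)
Your proof is correct and is essentially identical to the paper's: both bound $f(\omega_0^R)$ by $A_1(\omega_0^R)^{2^\ast-1}$, use the decay estimates \eqref{decay} to reduce to the integral $\int(1+|x-Ry_0|)^{-(N+2)}(1+|x-Ry|)^{-(N-2)}\,\mathrm{d}x$, and apply Lemma \ref{power}(a) with $\mu=N-2$. Your explicit check of uniformity in $y$ (via $|y-y_0|=2$ on $\partial B_2(y_0)$) is a welcome detail the paper leaves implicit.
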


\begin{proof}
By assumption $(f1),$ we have that $\left\vert f(s)\right\vert \leq
A_{1}\left\vert s\right\vert ^{2^{\ast}-1}$. On the other hand, from the
estimates (\ref{decay}) and Lemma \ref{power}(a) we obtain
\begin{align}
\int_{\mathbb{R}^{N}}(\omega_{0}^{R})^{2^{\ast}-1}\,\omega_{y}^{R}  &  \leq
C\int_{\mathbb{R}^{N}}\frac{\mathrm{d}x}{(1+|x-Ry_{0}|)^{N+2}(1+|x-Ry|)^{N-2}%
}\label{eq:interaction}\\
&  \leq C\,R^{-(N-2)}.\nonumber
\end{align}
Therefore,
\[
\int_{\mathbb{R}^{N}}f(\omega_{0}^{R})\,\omega_{y}^{R}\leq A_{1}%
\int_{\mathbb{R}^{N}}(\omega_{0}^{R})^{2^{\ast}-1}\,\omega_{y}^{R}\leq
C\,R^{-(N-2)}%
\]
for all $y\in\partial B_{2}(y_{0})$ and all $R\geq1$, as claimed.
\end{proof}

\begin{lemma}
\label{lowerbound}There exists a constant $C_{4}>0$ such that
\[
\int_{\mathbb{R}^{N}}f(s\omega_{0}^{R})\,t\omega_{y}^{R}\geq C_{4}R^{-(N-2)}%
\]
for all $s,t\geq\frac{1}{2}$, $y\in\partial B_{2}(y_{0})$ and $R\geq1$.
\end{lemma}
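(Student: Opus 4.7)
The plan is to restrict the integral to a fixed-radius ball centered at $Ry_{0}$, where the decay estimates (\ref{decay}) force $\omega_{0}^{R}$ to be bounded below by a positive constant independent of $R$, while $\omega_{y}^{R}$ contributes exactly the decay factor $R^{-(N-2)}$. Concretely, I would integrate only over $B_{1}(Ry_{0})$, which has volume independent of $R$.

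First I would observe that $f$ is strictly increasing on $(0,\infty)$. Combining the positivity $f(s)>0$ (noted in the paragraph following Proposition~\ref{prop:limit_problem}) with the strict inequality $f(s)s<f'(s)s^{2}$ from $(f2)$ yields $f'(s)>f(s)/s>0$. Consequently, for $s\geq 1/2$ one has the pointwise bound $f(s\omega_{0}^{R}(x))\geq f(\tfrac{1}{2}\omega_{0}^{R}(x))$.

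Next I would use (\ref{decay}) to estimate both factors on $B_{1}(Ry_{0})$. For $x$ with $|x-Ry_{0}|\leq 1$, the lower bound in (\ref{decay}) gives $\omega_{0}^{R}(x)\geq A_{2}\cdot 2^{-(N-2)}=:m_{0}$, a positive constant independent of $R$, so $f(s\omega_{0}^{R}(x))\geq f(m_{0}/2)=:c_{0}>0$ throughout this ball. Similarly, since $|y-y_{0}|=2$ and $R\geq 1$, for such $x$ we have $|x-Ry|\leq 1+2R\leq 3R$, and hence by (\ref{decay}) again
\[
\omega_{y}^{R}(x)\geq A_{2}(1+3R)^{-(N-2)}\geq A_{2}(4R)^{-(N-2)}.
\]

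Combining these bounds and using $t\geq 1/2$, I would conclude
\[
\int_{\mathbb{R}^{N}}f(s\omega_{0}^{R})\,t\omega_{y}^{R}\geq \int_{B_{1}(Ry_{0})}c_{0}\cdot\tfrac{1}{2}\cdot A_{2}(4R)^{-(N-2)}\,\mathrm{d}x=C_{4}R^{-(N-2)},
\]
with $C_{4}:=c_{0}A_{2}|B_{1}(0)|/(2\cdot 4^{N-2})$, which is independent of $s,t,y,R$ in the stated ranges. The only subtle point — more a bookkeeping issue than a genuine obstacle — is extracting strict monotonicity of $f$ from $(f2)$, so that the arbitrary factor $s\geq 1/2$ can be harmlessly replaced by $1/2$; the rest reduces to a routine integration over a ball of unit volume.
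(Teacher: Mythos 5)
Your proof is correct and follows essentially the same route as the paper's: restrict the (nonnegative) integrand to the unit ball around $Ry_{0}$, use the monotonicity of $f$ coming from $(f2)$ to replace $s$ by $\tfrac12$ and bound $f(\tfrac12\omega_{0}^{R})$ below by a positive constant there, and then use the lower decay estimate in (\ref{decay}) to extract the factor $R^{-(N-2)}$ from $\omega_{y}^{R}$. The only cosmetic difference is that the paper bounds $f(\tfrac12\omega)$ below by its minimum over $\overline{B_{1}(0)}$ rather than via the pointwise lower bound $\omega\geq A_{2}(1+|x|)^{-(N-2)}$; both work.
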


\begin{proof}
Note that, if $\left\vert x\right\vert <1,$ then, for every $y\in\partial
B_{2}(y_{0})$ and $R\geq1$,
\[
1+\left\vert x-R(y-y_{0})\right\vert <1+\left\vert x\right\vert +R\left\vert
y-y_{0}\right\vert <4R.
\]
Assumption $(f_{3})$ implies that $\frac{f(s)}{s}$ is strictly increasing for
$s>0$. Hence, so is $f.$ Performing a change of variable and using the
estimate (\ref{decay}) we obtain
\begin{align*}
\int_{\mathbb{R}^{N}}f(s\omega_{0}^{R})t\omega_{y}^{R}  &  \geq t\int
_{\mathbb{R}^{N}}f\left(  \frac{1}{2}\omega_{0}^{R}\right)  \omega_{y}^{R}%
\geq\frac{1}{2}\int_{B_{1}(Ry_{0})}f\left(  \frac{1}{2}\omega_{0}^{R}\right)
\omega_{y}^{R}\\
&  \geq\frac{1}{4}\left[  \min_{x\in B_{1}(0)}f\left(  \frac{1}{2}%
\omega(x)\right)  \right]  \int_{B_{1}(0)}\omega(x-R(y-y_{0}))\mathrm{d}x\\
&  \geq C\int_{B_{1}(0)}(1+|x-R(y-y_{0})|)^{-(N-2)}\mathrm{d}x\geq CR^{-(N-2)}%
\end{align*}
for all $s,t\geq\frac{1}{2}$, $y\in\partial B_{2}(y_{0})$ and $R\geq1$, as claimed.
\end{proof}

Note that Lemmas \ref{upperbound}\ and \ref{lowerbound}\ yield%
\begin{equation}
C_{4}R^{-(N-2)}\leq\varepsilon_{R}:=\int_{\mathbb{R}^{N}}f(\omega_{0}%
^{R})\omega_{y}^{R}\leq C_{3}R^{-(N-2)} \label{eq:epsilon}%
\end{equation}
for all $y\in\partial B_{2}(y_{0})$ and $R\geq1.$

\begin{lemma}
\label{tvm}For each $b>1$ there is a constant $C_{b}>0,$ such that%
\[
\left\vert \int_{\mathbb{R}^{N}}\left(  sf(\omega_{0}^{R})-f(s\omega_{0}%
^{R})\right)  \,\omega_{y}^{R}\right\vert \leq C_{b}\left\vert s-1\right\vert
\varepsilon_{R}%
\]
for all $s\in\lbrack0,b],$ $y\in\partial B_{2}(y_{0})$ and $R\geq1.$
\end{lemma}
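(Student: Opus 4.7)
The plan is to reduce the estimate to a pointwise bound via the mean value theorem and then to recycle the integral computations already carried out for Lemmas \ref{upperbound} and \ref{lowerbound}. For each fixed $x\in\mathbb{R}^N$, I would introduce the auxiliary one-variable function
\[
\phi_x(s) := sf(\omega_0^R(x)) - f(s\omega_0^R(x)),
\]
and observe that $\phi_x(1) = 0$. Applying the mean value theorem to $\phi_x$ on the interval with endpoints $s$ and $1$ produces some $\xi = \xi(s,x)$ between $s$ and $1$ with
\[
\phi_x(s) = \phi_x(s) - \phi_x(1) = \bigl(f(\omega_0^R) - f'(\xi\omega_0^R)\,\omega_0^R\bigr)(s-1).
\]

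Next I would invoke the elementary consequence of $(f1)$ recorded in Section \ref{sec:limprob}, namely $|f(\sigma)| \leq A_1|\sigma|^{2^{\ast}-1}$ and $|f'(\sigma)| \leq A_1|\sigma|^{2^{\ast}-2}$ for every $\sigma\in\mathbb{R}$. Since $\omega_0^R \geq 0$ and, for $b>1$, $\xi$ lies in $[0,b]$, this gives the uniform pointwise bound
\[
\bigl|sf(\omega_0^R) - f(s\omega_0^R)\bigr| \leq A_1\bigl(1+b^{2^{\ast}-2}\bigr)\,|s-1|\,(\omega_0^R)^{2^{\ast}-1}
\]
for all $s \in [0,b]$, $y\in\partial B_2(y_0)$, and $R\geq 1$.

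Multiplying by $\omega_y^R$ and integrating, the lemma reduces to controlling $\int_{\mathbb{R}^N}(\omega_0^R)^{2^{\ast}-1}\,\omega_y^R$. But this integral has already been handled inside the proof of Lemma \ref{upperbound}: by inequality (\ref{eq:interaction}), combining the decay estimates (\ref{decay}) with Lemma \ref{power}(a), it is bounded above by $CR^{-(N-2)}$. Pairing this with the lower estimate $\varepsilon_R \geq C_4 R^{-(N-2)}$ from (\ref{eq:epsilon}) lets me trade the factor $R^{-(N-2)}$ for a constant multiple of $\varepsilon_R$, yielding the claimed inequality with $C_b := A_1(1+b^{2^{\ast}-2})\,C/C_4$.

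I do not anticipate a real obstacle. The only point worth emphasizing is the choice of expansion point $s=1$ in the mean value step, which is what produces the factor $|s-1|$ and explains why the right-hand side involves $\varepsilon_R$ rather than a larger quantity such as $\int_{\mathbb{R}^N} \omega_0^R \omega_y^R$. The boundary case $s=0$ is automatically consistent, since $f(0)=0$ forces both sides of the claimed inequality to vanish.
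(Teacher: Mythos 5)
Your argument is correct and is essentially the paper's own proof: you apply the mean value theorem to $s\mapsto sf(t)-f(st)$ at the point $s=1$ with $t=\omega_0^R(x)$, use the bounds $|f(\sigma)|\leq A_1|\sigma|^{2^{\ast}-1}$, $|f'(\sigma)|\leq A_1|\sigma|^{2^{\ast}-2}$ together with $\xi\in[0,b]$ to get the pointwise bound $A_1(1+b^{2^{\ast}-2})|s-1|(\omega_0^R)^{2^{\ast}-1}$, and then conclude via (\ref{eq:interaction}) and the lower bound in (\ref{eq:epsilon}), exactly as in the paper. The only (harmless) slip is the closing aside: at $s=0$ the right-hand side equals $C_b\varepsilon_R$ and does not vanish, but the inequality still holds trivially since the left-hand side is $0$, and in any case your mean value argument already covers $s=0$.
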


\begin{proof}
Fix $t\in\mathbb{R}$ and set $g(s):=sf(t)-f(st).$ By the mean value theorem,
there exists $\zeta$ between $1$ and $s$ such that
\begin{align*}
\left\vert sf(t)-f(st)\right\vert  &  =\left\vert g(s)-g(1)\right\vert
=\left\vert f(t)-f^{\prime}(\zeta t)t\right\vert \left\vert s-1\right\vert \\
&  \leq\left(  \left\vert f(t)\right\vert +\left\vert f^{\prime}(\zeta
t)t\right\vert \right)  \left\vert s-1\right\vert \\
&  \leq A_{1}\left(  \left\vert t\right\vert ^{2^{\ast}-1}+\left\vert
\zeta\right\vert ^{2^{\ast}-2}\left\vert t\right\vert ^{2^{\ast}-1}\right)
\left\vert s-1\right\vert \\
&  \leq A_{1}(1+b^{2^{\ast}-2})\left\vert t\right\vert ^{2^{\ast}-1}\left\vert
s-1\right\vert \text{\qquad}\forall s\in\lbrack0,b],
\end{align*}
where the second-to-last inequality follows from assumption $(f1)$. So, from
the inequalities (\ref{eq:interaction}) and (\ref{eq:epsilon}) we obtain that%
\begin{align*}
\int_{\mathbb{R}^{N}}\left(  sf(\omega_{0}^{R})-f(s\omega_{0}^{R})\right)
\,\omega_{y}^{R}  &  \leq C\left\vert s-1\right\vert \int_{\mathbb{R}^{N}%
}(\omega_{0}^{R})^{2^{\ast}-1}\,\omega_{y}^{R}\\
&  \leq C\left\vert s-1\right\vert R^{-(N-2)}\leq C\left\vert s-1\right\vert
\varepsilon_{R}%
\end{align*}
for all $s\in\lbrack0,b],$ $y\in\partial B_{2}(0)$ and $R\geq1,$ as claimed.
\end{proof}

\begin{lemma}
\label{lem:V}There exists $\tau>N-2$ such that%
\[
\int_{\mathbb{R}^{N}}V^{+}\left(  \omega_{0}^{R}+\omega_{y}^{R}\right)
^{2}\leq CR^{-\tau}%
\]
for every $y\in\partial B_{2}(y_{0})$ and $R\geq1.$
\end{lemma}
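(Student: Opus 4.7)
The plan is to decompose $(\omega_0^R+\omega_y^R)^2 \leq 2(\omega_0^R)^2 + 2(\omega_y^R)^2$ and estimate each squared term separately. For each term I will combine the pointwise bound $V^{+}(x)\leq A_0(1+|x|)^{-\kappa}$ coming from $(V2)$ with the decay estimate $\omega(x)\leq A_3(1+|x|)^{-(N-2)}$ from \eqref{decay}, and then apply Lemma \ref{power}(a) with exponents $\alpha=\kappa$, $\beta=2(N-2)$ at the two base points $0$ and $y_0$ (respectively $0$ and $y$), which are distinct since $|y_0|=1$ and $|y|\geq |y-y_0|-|y_0|\geq 1$.

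Concretely, this yields
\[
\int_{\mathbb{R}^N} V^{+}(\omega_0^R)^2 \;\leq\; A_0 A_3^2 \int_{\mathbb{R}^N}\frac{\mathrm{d}x}{(1+|x|)^{\kappa}(1+|x-Ry_0|)^{2(N-2)}} \;\leq\; C R^{-\mu},
\]
with $\mu := \min\{\kappa,\ 2(N-2),\ \kappa+2(N-2)-N\} = \min\{\kappa,\ 2(N-2),\ \kappa+N-4\}$; the hypothesis $\alpha+\beta>N$ of Lemma \ref{power}(a) is immediate since $\kappa>N-2$ and $N\geq 3$. The bound for $\int V^{+}(\omega_y^R)^2$ is identical with the point $y_0$ replaced by $y$. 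Setting $\tau := \mu$, I then verify $\tau>N-2$: the inequality $\kappa>N-2$ is part of $(V2)$, the inequality $2(N-2)>N-2$ is just $N>2$, and $\kappa+N-4>N-2$ is equivalent to $\kappa>2$, also supplied by $(V2)$.

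The only delicate point I expect is the \emph{uniformity} of the constant over $y\in \partial B_2(y_0)$. Lemma \ref{power}(a) provides a constant $C_1$ that depends on the distance between the two base points. In the first application this distance is $|y_0|=1$, fixed; in the second it is $|y|$, which varies over the compact interval $[1,3]$ as $y$ ranges over $\partial B_2(y_0)$. Inspection of the splitting argument in the proof of Lemma \ref{power}(a) shows that $C_1$ depends continuously on this distance, so a single constant works for all admissible $y$. Combining the two estimates with the factor $2$ gives the claimed inequality with $\tau=\mu>N-2$.
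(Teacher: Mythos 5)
Your argument is correct, and it reaches the same exponent $\tau=\min\{\kappa,2(N-2),\kappa+N-4\}$ as the paper, but by a slightly different route. The paper expands the square, keeping the cross term $2\int V^{+}\omega_{0}^{R}\omega_{y}^{R}$, and handles it with Lemma \ref{power}(b) (taking $\gamma=N-2$), while the diagonal terms reduce to Lemma \ref{power}(a); you instead discard the cross term at the outset via $(\omega_{0}^{R}+\omega_{y}^{R})^{2}\leq 2(\omega_{0}^{R})^{2}+2(\omega_{y}^{R})^{2}$ and only ever invoke Lemma \ref{power}(a) with $\alpha=\kappa$, $\beta=2(N-2)$. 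Since the three quantities entering the minimum are identical in both computations, nothing is lost by dropping the cross term, and your version has the small advantage of not needing part (b) of Lemma \ref{power} at all for this estimate (it is still needed there only if one insists on the exact expansion). Your verifications that $\alpha+\beta>N$ and that each of $\kappa$, $2(N-2)$, $\kappa+N-4$ exceeds $N-2$ under $(V2)$ are all correct, and your explicit remark that the constant from Lemma \ref{power}(a) is uniform because the relevant base-point distance $|y|$ ranges over the compact set $[1,3]$ addresses a uniformity point that the paper passes over in silence; it is settled exactly as you say, by inspecting the dependence on $\rho=\tfrac{1}{2}|y-y_{0}|$ in the proof of Lemma \ref{power}(a).
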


\begin{proof}
From assumption $(V2),$ the estimates (\ref{decay}) and Lemma \ref{power}(b),
we immediately obtain that
\begin{align*}
\int_{\mathbb{R}^{N}}V^{+}\left(  \omega_{0}^{R}+\omega_{y}^{R}\right)  ^{2}
&  =\int_{\mathbb{R}^{N}}V^{+}\left(  \omega_{0}^{R}\right)  ^{2}%
+2\int_{\mathbb{R}^{N}}V^{+}\omega_{0}^{R}\omega_{y}^{R}+\int_{\mathbb{R}^{N}%
}V^{+}\left(  \omega_{y}^{R}\right)  ^{2}\\
&  \leq CR^{-\tau},
\end{align*}
with $\tau:=\min\{\kappa,2(N-2),\kappa+N-4\}>N-2.$
\end{proof}

For each $R\geq1$, $y\in\partial B_{2}(y_{0})$ and $\lambda\in\lbrack0,1]$, we
define
\[
z_{\lambda,y}^{R}:=\lambda\omega_{0}^{R}+(1-\lambda)\omega_{y}^{R}.
\]

\begin{lemma}
\label{projection}For each $R\geq1$, $y\in\partial B_{2}(y_{0})$ and
$\lambda\in\lbrack0,1]$, there exists a unique $T_{\lambda,y}^{R}>0$ such
that
\[
T_{\lambda,y}^{R}z_{\lambda,y}^{R}\in\mathcal{N}_{V}.
\]
Moreover, there exist $R_{0}\geq1$ and $T_{0}>2$ such that $T_{\lambda,y}%
^{R}\in\lbrack0,T_{0})$ for all $R\geq R_{0},$ $y\in\partial B_{2}(y_{0})$ and
$\lambda\in\lbrack0,1],$ and $T_{\lambda,y}^{R}$ is a continuous function of
the variables $\lambda,y$ and $R$.
\end{lemma}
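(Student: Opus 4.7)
The plan is to project $z := z_{\lambda,y}^R$ (which is strictly positive on all of $\mathbb{R}^N$, since $\omega>0$ and $\lambda\in[0,1]$) onto $\mathcal{N}_V$ by analyzing the scalar function
\[
\varphi_z(t):=\|z\|_V^2-\int_{\mathbb{R}^N}\frac{f(tz)}{tz}\,z^2,\qquad t>0,
\]
whose zeros in $(0,\infty)$ correspond exactly to $tz\in\mathcal{N}_V$. Assumption $(f2)$ implies that $s\mapsto f(s)/s$ is strictly increasing on $(0,\infty)$, so the integrand is strictly increasing in $t$ and hence $\varphi_z$ is strictly decreasing; this yields uniqueness of $T_{\lambda,y}^R$. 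For existence, combine $\varphi_z(0^+)=\|z\|_V^2>0$ with $\varphi_z(t)\to-\infty$ as $t\to\infty$: the standard consequence $f(s)\ge\theta F(1)s^{\theta-1}$ for $s\ge 1$ of $(f2)$ gives $\int(f(tz)/tz)z^2\ge\theta F(1)t^{\theta-2}\int_{\{z\ge 1/t\}}z^\theta\to\infty$.

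For the uniform upper bound on $T_{\lambda,y}^R$, first observe that $\|z_{\lambda,y}^R\|_V\le C\|z\|\le C(\|\omega_0^R\|+\|\omega_y^R\|)=2C\|\omega\|$ using $(V1)$ (which makes $\|\cdot\|_V$ equivalent to $\|\cdot\|$). Combining the defining identity $T^2\|z\|_V^2=\int f(Tz)(Tz)$ with the lower bound $f(s)s\ge\theta F(1)s^\theta$ for $s\ge 1$ produces
\[
T^2\|z\|_V^2\ge\theta F(1)\,T^\theta\int_{\{z\ge 1/T\}}z^\theta.
\]
The key step is to bound $\int_{\{z\ge 1/T\}}z^\theta$ from below uniformly in $(\lambda,y,R)$. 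Since $\max(\lambda,1-\lambda)\ge 1/2$, one of $\omega_0^R$ or $\omega_y^R$ enters $z$ with weight at least $1/2$; say $\lambda\ge 1/2$ (the other case is symmetric), so $z\ge\tfrac12\omega_0^R$. A translation then gives $\int_{\{z\ge 1/T\}}z^\theta\ge(1/2)^\theta\int_{\{\omega\ge 2/T\}}\omega^\theta$, and the right-hand side exceeds $\tfrac12(1/2)^\theta\int\omega^\theta>0$ once $T$ surpasses some threshold $T_\ast$ depending only on $\omega$. This forces $T^{\theta-2}$ to be uniformly bounded, and hence $T\le T_0$ for some $T_0$ (enlarge $T_0$ if necessary so that $T_0>2$).

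For the continuity of $T_{\lambda,y}^R$, I apply the implicit function theorem to $G(t,\lambda,y,R):=J_V(t z_{\lambda,y}^R)$. Joint $\mathcal{C}^1$-regularity of $G$ follows from that of $J_V$ together with the continuity of $(\lambda,y,R)\mapsto z_{\lambda,y}^R$ in $D^{1,2}(\mathbb{R}^N)$. Differentiating and using $T\|z\|_V^2=\int f(Tz)z$ at $t=T$ gives
\[
\partial_t G(T,\lambda,y,R)=\int_{\mathbb{R}^N}z\bigl[f(Tz)-Tz\,f'(Tz)\bigr],
\]
which is strictly negative because $(f2)$ provides $f(s)<sf'(s)$ for $s>0$ and $z>0$ pointwise. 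The implicit function theorem then yields continuity (in fact $\mathcal{C}^1$-dependence) of $T_{\lambda,y}^R$.

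The main obstacle is the uniform upper bound on $T_{\lambda,y}^R$: the Sobolev-type bound $|f(s)s|\le A_1|s|^{2^\ast}$ from $(f1)$ produces only a lower bound on $T$, so the superlinearity packaged in $(f2)$ is essential, together with the observation that one of the two translates of $\omega$ composing $z$ always carries at least half the weight, which provides the needed uniform mass estimate.
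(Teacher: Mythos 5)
Your proof is correct and is, in substance, the standard Nehari--fibering argument that the paper simply outsources to Lemma 3.2 of \cite{cm}: strict monotonicity of $s\mapsto f(s)/s$ from $(f2)$ gives existence and uniqueness of the projection, the superlinear lower bound $f(s)s\geq\theta F(1)s^{\theta}$ together with the observation that one of the two translates of $\omega$ always carries coefficient at least $\tfrac12$ gives the uniform bound on $T_{\lambda,y}^{R}$, and the strict negativity of $\partial_{t}J_{V}(tz)$ at $t=T$ gives continuity. Two small points. First, for $N=3$ and $\theta\leq 3$ one may have $\int_{\mathbb{R}^{N}}\omega^{\theta}=+\infty$ (since $\omega(x)\sim|x|^{-(N-2)}$ and $(f2)$ only guarantees $\theta>2$), so your lower bound ``exceeds $\tfrac12(1/2)^{\theta}\int\omega^{\theta}$'' should be replaced by the finite constant $(1/2)^{\theta}\int_{\{\omega\geq 2/T_{*}\}}\omega^{\theta}>0$ for a fixed threshold $T_{*}$; this changes nothing in the argument. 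Second, the implicit function theorem as you invoke it requires joint $\mathcal{C}^{1}$ regularity of $(t,\lambda,y,R)\mapsto J_{V}(tz_{\lambda,y}^{R})$, and differentiability in $R$ would need $\partial_{R}\,\omega(\cdot-Ry_{0})$ to exist in $D^{1,2}(\mathbb{R}^{N})$, which is not immediate from the decay estimates (\ref{decay}); but the continuity asserted in the lemma already follows from the strict sign change of $t\mapsto J_{V}(tz)$ across $t=T$ combined with joint continuity of this map, so the parenthetical claim of $\mathcal{C}^{1}$-dependence can simply be dropped.
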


\begin{proof}
The proof is the same as that of Lemma 3.2\ in \cite{cm}, with the obvious changes.
\end{proof}

\begin{lemma}
\label{lem:lambda1/2}For $\lambda=\frac{1}{2}$ we have that $T_{\lambda,y}%
^{R}\rightarrow2$ as $R\rightarrow\infty$ uniformly in $y\in\partial
B_{2}(y_{0}).$
\end{lemma}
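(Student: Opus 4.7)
My plan is to pass to the limit, along an arbitrary sequence $R_n\to\infty$ with $y_n\in\partial B_2(y_0)$, in the Nehari identity $J_V(T_n z_n)=0$ that characterizes $T_n:=T_{1/2,y_n}^{R_n}$, which reads
\[
T_n^{2}\,\|z_n\|_V^{2}=\int_{\mathbb{R}^N}f(T_n z_n)\,T_n z_n,\qquad z_n:=\tfrac{1}{2}(\omega_0^{R_n}+\omega_{y_n}^{R_n}).
\]
By Lemma~\ref{projection}, $(T_n)$ is bounded, so after extracting a subsequence I may assume $T_n\to T^\ast\in[0,T_0]$. The goal is to show $T^\ast=2$; since the conclusion is independent of the chosen sequence, the convergence is then automatically uniform in $y$.

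For the left-hand side, $\|\omega_0^{R}\|^{2}=\|\omega_y^{R}\|^{2}=\|\omega\|^{2}$ together with the easy identity $\langle\omega_0^R,\omega_y^R\rangle=\int f(\omega_0^R)\omega_y^R=\varepsilon_R$ and the bound $\varepsilon_R\le C_3R^{-(N-2)}$ from Lemma~\ref{upperbound} give $\|z_n\|^{2}\to\tfrac12\|\omega\|^{2}$. For the potential term $\int V z_n^{2}$, Lemma~\ref{lem:V} handles $V^{+}$ with rate $R^{-\tau}$, $\tau>N-2$. For $V^{-}$ I will use that $V\in L^{N/2}$: since $z_n$ is uniformly bounded in $L^{2^\ast}$ and tends to $0$ pointwise on every fixed bounded set, splitting the integral between a large ball $B_\rho(0)$ (controlled by dominated convergence using the decay~(\ref{decay})) and its complement (controlled by H\"older and the smallness of $|V^{-}\mathbf 1_{\mathbb R^N\setminus B_\rho(0)}|_{N/2}$) shows $\int Vz_n^{2}\to0$ uniformly in $y$. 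Hence $\|z_n\|_V^{2}\to\tfrac12\|\omega\|^{2}$.

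For the right-hand side I will decompose
\[
\int f(T_n z_n)T_n z_n=\frac{T_n}{2}\int f(T_n z_n)\omega_0^{R_n}+\frac{T_n}{2}\int f(T_n z_n)\omega_{y_n}^{R_n}
\]
and translate each piece to its center of concentration. After $x=\xi+R_n y_0$, the first integrand becomes $f\!\bigl(\tfrac{T_n}{2}(\omega(\xi)+\omega(\xi-\tau_n))\bigr)\omega(\xi)$ with $|\tau_n|=2R_n\to\infty$. The growth bound $|f'(s)|\le A_1|s|^{2^\ast-2}$ (which follows from $(f1)$ since $p<2^\ast<q$) combined with the mean value theorem reduces the discrepancy with $f(\tfrac{T_n}{2}\omega)\omega$ to cross integrals of the form $\int\omega^{2^\ast-1}(\xi)\omega(\xi-\tau_n)\,d\xi$, which are $O(R_n^{-(N-2)})$ by~(\ref{decay}) and Lemma~\ref{power}(a). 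The remaining main term $\int f(\tfrac{T_n}{2}\omega)\omega$ converges to $\int f(\tfrac{T^\ast}{2}\omega)\omega$ by dominated convergence. The $\omega_{y_n}^{R_n}$ piece is handled identically, and summing yields
\[
\int f(T_n z_n)T_n z_n\longrightarrow T^\ast\int f\!\bigl(\tfrac{T^\ast}{2}\omega\bigr)\omega.
\]

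Equating the two limits gives $\tfrac12(T^\ast)^{2}\|\omega\|^{2}=T^\ast\int f(\tfrac{T^\ast}{2}\omega)\omega$. Lemma~\ref{lem:nehari}(a) applied to $T_n z_n\in\mathcal N_V$ forces $T^\ast>0$, so I may divide by $T^\ast$; using $\omega\in\mathcal N_0$ to rewrite $\|\omega\|^{2}=\int f(\omega)\omega$, the identity reduces to $\int\bigl[\tfrac{T^\ast}{2}f(\omega)-f(\tfrac{T^\ast}{2}\omega)\bigr]\omega=0$. Since $(f2)$ makes $f(s)/s$ strictly increasing on $(0,\infty)$ and $\omega>0$ everywhere by Proposition~\ref{prop:limit_problem}, the integrand has a strict sign unless $T^\ast/2=1$, forcing $T^\ast=2$. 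The main technical obstacle I anticipate is controlling the nonlinear interaction of the two bumps uniformly over $y\in\partial B_2(y_0)$ at the critical Sobolev scaling; the delicate decay estimates in Lemmas~\ref{upperbound}--\ref{lem:V} have been crafted precisely to make this step work.
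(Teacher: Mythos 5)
Your proof is correct, and it reaches the conclusion by a route that differs in its final mechanism from the paper's. The paper evaluates the fibering derivative at the fixed point $t=2$: it expands $J_0(\omega_0^R+\omega_y^R)$, uses $J_0(\omega_0^R)=J_0(\omega_y^R)=0$ and the interaction estimates to get $J_0(\omega_0^R+\omega_y^R)=o_R(1)$, adds the potential term via Lemma \ref{lem:V}, and concludes $J_V(2z_{1/2,y}^R)=o_R(1)$, leaving implicit the passage from this to $T^R_{1/2,y}\to 2$ (which rests on the monotonicity structure of $t\mapsto J_V(tz)$ from Lemma \ref{lem:nehari}(d)). You instead extract a convergent subsequence $T_n\to T^\ast$ of the Nehari projections themselves, pass to the limit in the identity $T_n^2\Vert z_n\Vert_V^2=\int f(T_nz_n)T_nz_n$ by decoupling the two bumps, and identify $T^\ast=2$ from the limiting scalar equation $\int\bigl[\tfrac{T^\ast}{2}f(\omega)-f(\tfrac{T^\ast}{2}\omega)\bigr]\omega=0$ together with the strict monotonicity of $f(s)/s$ and the positivity of $\omega$; the subsequence argument then upgrades this to uniform convergence in $y$. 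The underlying interaction estimates (decay of $\omega$, Lemma \ref{power}(a), the bound $|f'(s)|\le A_1|s|^{2^\ast-2}$, Lemma \ref{lem:V}) are the same in both arguments, and the constants are uniform over $\partial B_2(y_0)$ since $|y-y_0|=2$ is fixed, so your uniformity claim is legitimate. What your version buys is that it makes fully explicit the step the paper compresses into ``this yields the claim,'' and it correctly rules out $T^\ast=0$ via Lemma \ref{lem:nehari}(a); the cost is a somewhat longer computation, since you must decouple the full nonlinear term $\int f(T_nz_n)T_nz_n$ rather than only the difference terms $\int[f(\omega_0^R+\omega_y^R)-f(\omega_0^R)]\omega_0^R$.
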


\begin{proof}
Since $\omega$ solves (\ref{pinf}), we have that
\begin{align*}
J_{0}(\omega_{0}^{R}+\omega_{y}^{R})  &  =J_{0}(\omega_{0}^{R})+J_{0}%
(\omega_{y}^{R})+2\int_{\mathbb{R}^{N}}\nabla\omega_{0}^{R}\cdot\nabla
\omega_{y}^{R}\\
&  \quad-\int_{\mathbb{R}^{N}}[f(\omega_{0}^{R}+\omega_{y}^{R})-f(\omega
_{0}^{R})]\,\omega_{0}^{R}-\int_{\mathbb{R}^{N}}[f(\omega_{0}^{R}+\omega
_{y}^{R})-f(\omega_{y}^{R})]\,\omega_{y}^{R}\\
&  =2\int_{\mathbb{R}^{N}}\nabla\omega\cdot\nabla\omega_{y-y_{0}}^{R}%
-\int_{\mathbb{R}^{N}}[f(\omega_{0}^{R}+\omega_{y}^{R})-f(\omega_{0}%
^{R})]\,\omega_{0}^{R}\\
&  \quad-\int_{\mathbb{R}^{N}}[f(\omega_{0}^{R}+\omega_{y}^{R})-f(\omega
_{y}^{R})]\,\omega_{y}^{R}\text{.}%
\end{align*}

Set $h(s):=A_{1}\min\{\left\vert s\right\vert ^{p-2},\left\vert s\right\vert
^{q-2}\}\leq\left\vert s\right\vert ^{2^{\ast}-2}.$ Using inequality
(\ref{eq:f}) we get that%
\begin{align*}
\int_{\mathbb{R}^{N}}\left\vert f(\omega_{0}^{R}+\omega_{y}^{R})-f(\omega
_{0}^{R})\right\vert \,\omega_{0}^{R}  &  \leq\int_{\mathbb{R}^{N}}%
h(\omega_{0}^{R}+\omega_{y}^{R})\omega_{y}^{R}\omega_{0}^{R}\\
&  \leq\left\vert \omega_{0}^{R}+\omega_{y}^{R}\right\vert _{2^{\ast}%
}^{2^{\ast}-2}\left(  \int_{\mathbb{R}^{N}}\left(  \omega_{y}^{R}\right)
^{2^{\ast}/2}\left(  \omega_{0}^{R}\right)  ^{2^{\ast}/2}\right)  ^{2/2^{\ast
}}\\
&  \leq C\left(  \int_{\mathbb{R}^{N}}\left(  \omega_{y-y_{0}}^{R}\right)
^{2^{\ast}/2}\omega^{2^{\ast}/2}\right)  ^{2/2^{\ast}}%
\end{align*}
and, similarly,%
\[
\int_{\mathbb{R}^{N}}\left\vert f(\omega_{0}^{R}+\omega_{y}^{R})-f(\omega
_{y}^{R})\right\vert \,\omega_{y}^{R}\leq C\left(  \int_{\mathbb{R}^{N}}%
\omega^{2^{\ast}/2}\left(  \omega_{y-y_{0}}^{R}\right)  ^{2^{\ast}/2}\right)
^{2/2^{\ast}}.
\]
From the above inequalities we conclude that $J_{0}(\omega_{0}^{R}+\omega
_{y}^{R})=o_{R}(1)$, where $o_{R}(1)\rightarrow0$ as $R\rightarrow\infty,$
uniformly in $y\in\partial B_{2}(y_{0}).$ Hence, for $\lambda=\frac{1}{2},$ we
get that%
\[
J_{V}(2\,z_{\lambda,\,y}^{R})=J_{V}(\omega_{0}^{R}+\omega_{y}^{R}%
)=J_{0}(\omega_{0}^{R}+\omega_{y}^{R})+\int_{\mathbb{R}^{N}}V\left(
\omega_{0}^{R}+\omega_{y}^{R}\right)  ^{2}=o_{R}(1).
\]
This yields the claim.
\end{proof}

The proof of the next result follows that of Lemma 2.1 in \cite{ew}.

\begin{lemma}
\label{new}For each $a>0$ there exists $C_{a}\geq0$ such that
\[
F(s+t)-F(s)-F(t)-f(s)t-f(t)s\geq-C_{a}(st)^{1+\frac{\nu}{2}}%
\]
for all $s,t\in\lbrack0,a]$ and $\nu\in(0,q-2)$.
\end{lemma}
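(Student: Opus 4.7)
The plan is to reduce the lemma to a short monotonicity argument rather than attempting an integral representation of the defect $G(s,t):=F(s+t)-F(s)-F(t)-f(s)t-f(t)s$. Since $G$ is symmetric in $s,t$, I may assume $t\leq s$ without loss of generality. The key observation is that assumption $(f2)$ yields $f'(s)s^{2}>f(s)s\geq 0$, hence $f'>0$ on $(0,\infty)$; together with $f(0)=0$ (which follows from the bound $|f(s)|\leq A_{1}s^{q-1}$ for $s\leq 1$ in $(f1)$), this gives that $f$ is non-decreasing on $[0,\infty)$. Consequently
\[
F(s+t)-F(s)\;=\;\int_{0}^{t}f(s+\tau)\,d\tau\;\geq\;\int_{0}^{t}f(s)\,d\tau\;=\;f(s)\,t,
\]
and substituting back into the definition of $G$ produces the one-line lower bound
\[
G(s,t)\;\geq\;-F(t)\,-\,f(t)\,s.
\]

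It then suffices to show that $F(t)+f(t)\,s\leq C_{a}(st)^{1+\nu/2}$ whenever $0\leq t\leq s\leq a$ and $\nu\in(0,q-2)$. For this I would use $(f1)$ with a case split: set $\alpha:=q$ if $t\leq 1$ and $\alpha:=p$ if $t\geq 1$, so that $F(t)\leq A_{1}\,t^{\alpha}$ and $f(t)\leq A_{1}\,t^{\alpha-1}$. Combining these with $t\leq s$ gives
\[
F(t)+f(t)\,s\;\leq\;A_{1}\bigl(t^{\alpha}+s\,t^{\alpha-1}\bigr)\;\leq\;2A_{1}\,s\,t^{\alpha-1}.
\]
Finally, using $t\leq s$ once more,
\[
\frac{s\,t^{\alpha-1}}{(st)^{1+\nu/2}}\;=\;s^{-\nu/2}\,t^{\alpha-2-\nu/2}\;\leq\;t^{\alpha-2-\nu},
\]
and the right-hand side is uniformly bounded on $[0,a]$ for $\nu\in(0,q-2)$: when $t\leq 1$ one has $\alpha=q$ and $q-2-\nu>0$, so $t^{q-2-\nu}\leq 1$; when $t\geq 1$ one has $\alpha=p$, and $t^{p-2-\nu}\leq\max(1,a^{p-2-\nu})$ regardless of the sign of $p-2-\nu$.

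The only mildly fiddly point I foresee is keeping track of the two regimes $t\leq 1$ and $t\geq 1$, which reflect the two growth rates of $f$ in $(f1)$; in each regime either the exponent in the last inequality is non-negative or the base stays bounded away from $0$, and nothing actually goes wrong. The genuine content of the lemma is the first inequality $F(s+t)-F(s)\geq f(s)\,t$, which uses only that $f$ is monotone and which cancels the most dangerous term $f(s)t$ outright, leaving behind only the symmetric partner $F(t)+f(t)s$ that is controlled by pure power-law comparison.
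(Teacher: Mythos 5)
Your proof is correct and follows essentially the same route as the paper: the monotonicity of $f$ yields $F(s+t)-F(s)\geq f(s)t$, which cancels the dangerous term and reduces the claim to bounding $F(t)+f(t)s$ by $(st)^{1+\nu/2}$ via the power-law growth in $(f1)$ and the reduction to $t\leq s$. The only (harmless) cosmetic differences are that you derive $f'>0$ from $(f2)$ rather than invoking $(f3)$, and you run the final power-counting in the two regimes $t\leq1$ and $t\geq1$ directly instead of first packaging $(f1)$ into the single bound $f(\zeta)\leq M_a\zeta^{1+\nu}$ on $[0,a]$ as the paper does.
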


\begin{proof}
The inequality is clearly satisfied if $s=0$ or $t=0$. Assumption $(f_{3})$
implies that $f$ is increasing for $s>0$. Therefore,
\[
F(s+t)-F(s)=\int_{s}^{s+t}f(\zeta)\,\mathrm{d}\zeta\geq f(s)t
\]
for all $s,t>0.$ Moreover, by $(f_{2}),$ we have that $f(s)=o(|s|^{1+\nu}%
)\ $for any $\nu\in(0,q-2).$ Hence, there exists $M_{a}>0$ such that
$f(s)\leq{M}_{a}s^{1+\nu}$ for all $s\in\left[  0,a\right]  $. It follows
that
\begin{align*}
&  F(s+t)-F(s)-F(t)-f(s)t-f(t)s\geq-F(t)-f(t)s\\
&  \geq-{M}_{a}\int_{0}^{t}\zeta^{1+\nu}\mathrm{d}\zeta-M_{a}st^{1+\nu}%
=-M_{a}\left(  \frac{t^{2+\nu}}{2+\nu}+st^{1+\nu}\right)
\end{align*}
for all $s,t\in\lbrack0,a].$ So, if $t\leq s$, we get that%
\[
F(s+t)-F(s)-F(t)-f(s)t-f(t)s\geq-\frac{3}{2}{M}_{a}(st)^{1+\frac{\nu}{2}}.
\]
As this expression is symmetric in $s$ and $t$, it holds true also when $s\leq
t$, and the proof is complete.
\end{proof}

With the previous lemmas on hand, we now prove the following estimate.

\begin{proposition}
\label{prop:main}There exists $R_{1}\geq1$ and, for each $R>R_{1},$ a number
$\eta_{R}>0$ such that%
\[
I_{V}(T_{\lambda,y}^{R}z_{\lambda,y}^{R})\leq2c_{0}-\eta_{R}%
\]
for all $\lambda\in\lbrack0,1]$ and all $y\in\partial B_{2}(y_{0})$.
\end{proposition}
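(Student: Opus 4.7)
The plan is to estimate $I_V(T^R_{\lambda,y} z^R_{\lambda,y})$ by writing $T^R_{\lambda,y} z^R_{\lambda,y} = T_1 \omega_0^R + T_2 \omega_y^R$ with $T_1 := T^R_{\lambda,y}\lambda$ and $T_2 := T^R_{\lambda,y}(1-\lambda)$, both lying in $[0,T_0]$ by Lemma \ref{projection}. The idea is to compare with the two ``single-bump'' energies $I_0(T_i\omega) \leq c_0$ (by Lemma \ref{lem:nehari}(d)) and show that the cross-terms produce a strict deficit, keeping the total below $2c_0$. The strict improvement will come either from the attractive interaction $-T_1T_2\varepsilon_R$ (when both $T_i$ are close to $1$) or from the strict maximality of $t \mapsto I_0(t\omega)$ at $t=1$ (otherwise).

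Since $-\Delta\omega = f(\omega)$, integration by parts gives $\langle \omega_0^R,\omega_y^R\rangle = \int f(\omega_0^R)\omega_y^R = \varepsilon_R$. Writing $T := T^R_{\lambda,y}$ and $z := z^R_{\lambda,y}$, an algebraic expansion gives
\begin{equation*}
I_V(Tz) = I_0(T_1\omega) + I_0(T_2\omega) + T_1 T_2\varepsilon_R - D_R + \tfrac{T^2}{2}\int V z^2,
\end{equation*}
where $D_R := \int[F(T_1\omega_0^R + T_2\omega_y^R) - F(T_1\omega_0^R) - F(T_2\omega_y^R)]$. Fix $\nu \in (2/(N-2),\, q-2)$ (nonempty since $q > 2^*$). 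Lemma \ref{new} with $a = T_0\|\omega\|_\infty$ bounds $D_R$ from below by
\begin{equation*}
\int f(T_1\omega_0^R)T_2\omega_y^R + \int f(T_2\omega_y^R)T_1\omega_0^R - C(T_1T_2)^{1+\nu/2}\int(\omega_0^R\omega_y^R)^{1+\nu/2}.
\end{equation*}
By Lemma \ref{tvm}, together with the identity $\int f(T_2\omega_y^R)\omega_0^R = \int f(T_2\omega_0^R)\omega_y^R$ (which follows from radial symmetry of $\omega$ after a change of variable), the two $f$-interaction integrals sum to $2T_1T_2\varepsilon_R + O((|T_1-1|+|T_2-1|)\varepsilon_R)$. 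By Lemma \ref{power}(a) with $\alpha=\beta=(N-2)(1+\nu/2)$, the choice $\nu > 2/(N-2)$ yields $\int(\omega_0^R\omega_y^R)^{1+\nu/2} = O(R^{-\mu})$ with $\mu > N-2$, hence $o(\varepsilon_R)$ in view of \eqref{eq:epsilon}. Since $0 \leq z \leq \omega_0^R + \omega_y^R$, Lemma \ref{lem:V} gives $\tfrac{T^2}{2}\int V z^2 \leq CR^{-\tau} = o(\varepsilon_R)$, with $\tau > N-2$ thanks to $(V2)$. Combining,
\begin{equation*}
I_V(Tz) \leq I_0(T_1\omega) + I_0(T_2\omega) - T_1T_2\varepsilon_R + C_*(|T_1-1|+|T_2-1|)\varepsilon_R + o(\varepsilon_R), \quad (\star)
\end{equation*}
with $C_*$ and the $o(\cdot)$ uniform in $\lambda, y$.

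Conclude via a dichotomy. Pick $\delta\in(0,1/2)$ with $C_*\delta \leq 1/8$. If $|T_1-1|+|T_2-1|\leq\delta$, then $T_1,T_2 \geq 1/2$, so $T_1T_2 \geq 1/4$; since $I_0(T_i\omega) \leq c_0$, $(\star)$ yields $I_V(Tz) \leq 2c_0 - \varepsilon_R/16$ for $R$ large. Otherwise $|T_{i_0} - 1| \geq \delta/2$ for some $i_0 \in \{1,2\}$; by continuity of $t \mapsto I_0(t\omega)$ on the compact interval $[0,T_0]$ and its strict maximum at $t=1$, there exists $\eta = \eta(\delta) > 0$ with $I_0(T_{i_0}\omega) \leq c_0 - \eta$, so $(\star)$ gives $I_V(Tz) \leq 2c_0 - \eta/2$ for $R$ large. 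Setting $\eta_R := \min(\varepsilon_R/16,\, \eta/2) > 0$ completes the proof.

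The main obstacle is guaranteeing that both error contributions --- the higher-order interaction integral from Lemma \ref{new} and the potential term from Lemma \ref{lem:V} --- are \emph{strictly} of lower order than the gain $\varepsilon_R \asymp R^{-(N-2)}$. This depends precisely on the sharp exponent matchings $\nu > 2/(N-2)$ (available exactly because $q > 2^*$) and $\kappa > \max\{2, N-2\}$ (exactly what $(V2)$ provides); were either strict inequality weakened, the deficit $-T_1T_2\varepsilon_R$ could be absorbed by the error and the bound would fail.
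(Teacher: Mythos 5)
Your proof is correct, and its core --- the expansion of $I_{V}(T_{\lambda,y}^{R}z_{\lambda,y}^{R})$ into two single-bump energies plus an interaction term of size $\varepsilon_{R}$ and errors controlled by Lemmas \ref{lem:nehari}(d), \ref{new}, \ref{power}(a), \ref{tvm}, \ref{lem:V} and the two-sided bound (\ref{eq:epsilon}) --- is the same as the paper's. Where you genuinely diverge is the endgame. The paper gets the strict deficit from Lemma \ref{lowerbound} (the lower bound $C_{0}\varepsilon_{R}$ on the cross terms, valid only for $s,t\geq\frac12$) and then splits on $\lambda$: near $\lambda=\frac12$ it invokes Lemma \ref{lem:lambda1/2} together with continuity of $T_{\lambda,y}^{R}$ to ensure $s,t$ are close to $1$, and away from $\lambda=\frac12$ it uses the strict maximality in Lemma \ref{lem:nehari}(d). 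You instead keep the deficit in the exact form $-T_{1}T_{2}\varepsilon_{R}$ (the gradient cross-term $+T_{1}T_{2}\varepsilon_{R}$ minus the $2T_{1}T_{2}\varepsilon_{R}$ extracted from $D_{R}$ via Lemma \ref{new} and Lemma \ref{tvm}) and run the dichotomy directly on $|T_{1}-1|+|T_{2}-1|$. This makes Lemmas \ref{lowerbound} and \ref{lem:lambda1/2} unnecessary and sidesteps the (only sketched in the paper) uniformity-in-$y$-and-$R$ step needed to pass from ``$s,t\to1$ at $\lambda=\frac12$'' to ``$s,t$ close to $1$ on a whole $\lambda$-neighborhood of $\frac12$''; the paper's version, in exchange, phrases the case distinction in the geometric parameter $\lambda$, which is the variable the topological argument in the proof of Theorem \ref{thm:main} sees, but for Proposition \ref{prop:main} itself your coefficient-based dichotomy is the more economical route. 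The one auxiliary fact you rely on beyond the paper's lemmas, the identity $\int_{\mathbb{R}^{N}}f(T_{2}\omega_{y}^{R})\,\omega_{0}^{R}=\int_{\mathbb{R}^{N}}f(T_{2}\omega_{0}^{R})\,\omega_{y}^{R}$ needed to apply Lemma \ref{tvm} to the second cross term, does hold by the radial symmetry of $\omega$ under the reflection interchanging $Ry_{0}$ and $Ry$ (the paper uses the same symmetry implicitly), so there is no gap.
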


\begin{proof}
To simplify the notation, let us set
\[
s:=T_{\lambda,y}^{R}\lambda,\quad t:=T_{\lambda,y}^{R}(1-\lambda),\quad
\omega_{0}:=\omega_{0}^{R},\quad\omega_{y}:=\omega_{y}^{R}.
\]
Then $s,t\in\lbrack0,T_{0})$ if $R\geq R_{0}$, with $R_{0}\geq1$ and $T_{0}>2$
as in Lemma \ref{projection}, and
\begin{align}
&  I_{V}(T_{\lambda,y}^{R}z_{\lambda,y}^{R})=I_{V}(s\omega_{0}+t\omega
_{y})\nonumber\\
&  =\frac{s^{2}}{2}\int_{\mathbb{R}^{N}}|\nabla\omega_{0}|^{2}+\frac{s^{2}}%
{2}\int_{\mathbb{R}^{N}}V\omega_{0}^{2}+\frac{t^{2}}{2}\int_{\mathbb{R}^{N}%
}|\nabla\omega_{y}|^{2}+\frac{t^{2}}{2}\int_{\mathbb{R}^{N}}V\omega_{y}%
^{2}\nonumber\\
&  \quad+st\int_{\mathbb{R}^{N}}\nabla\omega_{0}\cdot\nabla\omega_{y}%
+st\int_{\mathbb{R}^{N}}V\omega_{0}\omega_{y}-\int_{\mathbb{R}^{N}}%
F(s\omega_{0}+t\omega_{y})\nonumber\\
&  =\frac{s^{2}}{2}\int_{\mathbb{R}^{N}}|\nabla\omega_{0}|^{2}-\int
_{\mathbb{R}^{N}}F(s\omega_{0})+\frac{t^{2}}{2}\int_{\mathbb{R}^{N}}%
|\nabla\omega_{y}|^{2}-\int_{\mathbb{R}^{N}}F(t\omega_{y})\label{eq1}\\
&  \quad+\frac{s^{2}}{2}\int_{\mathbb{R}^{N}}V\omega_{0}^{2}+\frac{t^{2}}%
{2}\int_{\mathbb{R}^{N}}V\omega_{y}^{2}+st\int_{\mathbb{R}^{N}}V\omega
_{0}\omega_{y}\label{eq2}\\
&  \quad+st\int_{\mathbb{R}^{N}}\nabla\omega_{0}\cdot\nabla\omega
_{y}\label{eq3}\\
&  \quad-\int_{\mathbb{R}^{N}}\left[  F(s\omega_{0}+t\omega_{y})-F(s\omega
_{0})-F(t\omega_{y})-f(s\omega_{0})t\omega_{y}-f(t\omega_{y})s\omega
_{0}\right] \label{eq4}\\
&  \quad-\int_{\mathbb{R}^{N}}f(s\omega_{0})t\omega_{y}-\int_{\mathbb{R}^{N}%
}f(t\omega_{y})s\omega_{0} \label{eq5}%
\end{align}
Next, we estimate each of the numbered lines. As $\omega_{0}$ and $\omega_{y}$
are ground states of the limit problem (\ref{pinf}), Lemma \ref{lem:nehari}%
(d)\ yields%
\[
(\ref{eq1})=I_{0}(s\omega_{0})+I_{0}(t\omega_{y})\leq I_{0}(\omega_{0}%
)+I_{0}(\omega_{y})=2c_{0}.
\]
From Lemma \ref{lem:V} and estimates (\ref{eq:epsilon}) we get that
\[
(\ref{eq2})\leq CR^{-\tau}=o(\varepsilon_{R}).
\]
Lemma \ref{new}\ with $\nu\in(\frac{2}{N-2},q-2)$ and Lemma \ref{power}(a)
with $\alpha=\beta=(1+\frac{\nu}{2})(N-2)$, imply that, for some $\mu>N-2$,
\begin{align*}
(\ref{eq4})  &  =-\int_{\mathbb{R}^{N}}\left[  F(s\omega_{0}+t\omega
_{y})-F(s\omega_{0})-F(t\omega_{y})-f(s\omega_{0})t\omega_{y}-f(t\omega
_{y})s\omega_{0}\right] \\
&  \leq C\left\vert st\right\vert ^{1+\frac{\nu}{2}}\int_{\mathbb{R}^{N}%
}(\omega_{0}\omega_{y})^{1+\frac{\nu}{2}}\leq CR^{-\mu}=o(\varepsilon_{R}).
\end{align*}
We write the sum of the remaining terms as
\begin{align}
(\ref{eq3})+(\ref{eq5})  &  =\frac{t}{2}\int_{\mathbb{R}^{N}}[sf(\omega
_{0})-f(s\omega_{0})]\omega_{y}+\frac{s}{2}\int_{\mathbb{R}^{N}}[tf(\omega
_{y})-f(t\omega_{y})]\omega_{0}\nonumber\\
&  -\frac{1}{2}\int_{\mathbb{R}^{N}}f(s\omega_{0})\,t\omega_{y}-\frac{1}%
{2}\int_{\mathbb{R}^{N}}f(t\omega_{y})\,s\omega_{0}.\nonumber
\end{align}
By Lemma \ref{tvm} there exists a constant $C>0$ such that
\[
\frac{t}{2}\int_{\mathbb{R}^{N}}\left\vert sf(\omega_{0})-f(s\omega
_{0})\right\vert \omega_{y}+\frac{s}{2}\int_{\mathbb{R}^{N}}\left\vert
tf(\omega_{y})-f(t\omega_{y})\right\vert \omega_{0}\leq C\left(  \left\vert
s-1\right\vert +\left\vert t-1\right\vert \right)  \varepsilon_{R},
\]
for all $s,t\in\lbrack0,T_{0}],$ $y\in\partial B_{2}(y_{0})$ and $R\geq R_{0}%
$. Moreover, Lemma \ref{lowerbound}, yields a constant $C_{0}>0$ such that
\[
\frac{1}{2}\int_{\mathbb{R}^{N}}f(s\omega_{0})\,t\omega_{y}+\frac{1}{2}%
\int_{\mathbb{R}^{N}}f(t\omega_{y})\,s\omega_{0}\geq C_{0}\varepsilon_{R}%
\]
for all $s,t\geq\frac{1}{2}$, $y\in\partial B_{2}(y_{0})$ and $R\geq R_{0}$.
By Lemma \ref{lem:lambda1/2}, if $\lambda=\frac{1}{2},$ then $s,t\rightarrow1$
as $R\rightarrow\infty.$ Therefore, there exist $R_{1}\geq R_{0}$ and
$\delta\in(0,\frac{1}{2})$ such that
\[
(\ref{eq3})+(\ref{eq5})\leq-\frac{C_{0}}{2}\varepsilon_{R}%
\]
for all $\lambda\in\lbrack\frac{1}{2}-\delta,\frac{1}{2}+\delta],$
$y\in\partial B_{2}(y_{0})$ and $R\geq R_{1}.$ Summing up, we have shown that
\begin{equation}
I_{V}(s\omega_{0}+t\omega_{y})\leq2c_{0}-\frac{C_{0}}{2}\varepsilon
_{R}+o(\varepsilon_{R}) \label{ineq1}%
\end{equation}
for all $\lambda\in\lbrack\frac{1}{2}-\delta,\frac{1}{2}+\delta]$,
$y\in\partial B_{2}(y_{0}),$ $R\geq R_{1}.$

On the other hand, by Lemma \ref{lem:nehari}(d), there exists $\gamma
\in(0,c_{0})$ such that
\[
(\ref{eq1})=I_{0}(s\omega_{0})+I_{0}(t\omega_{y})\leq2c_{0}-\gamma
\]
for all $\lambda\in\lbrack0,\frac{1}{2}-\delta]\cup\lbrack\frac{1}{2}%
+\delta,1]$, $y\in\partial B_{2}(y_{0})$ and $R$ sufficiently large. Since
$(\ref{eq2})+\cdots+(\ref{eq5})\leq O(\varepsilon_{R}),$ we conclude that%
\begin{equation}
I_{V}(s\omega_{0}+t\omega_{y})\leq2c_{0}-2\gamma+O(\varepsilon_{R})
\label{ineq2}%
\end{equation}
for all $\lambda\in\lbrack0,\frac{1}{2}-\delta]\cup\lbrack\frac{1}{2}%
+\delta,1]$, $y\in\partial B_{2}(y_{0})$ and $R$ sufficiently large.

Inequalities (\ref{ineq1}) and (\ref{ineq2}), together, yield the statement of
the proposition.
\end{proof}

\begin{lemma}
\label{energy_on_boundary} For any $\delta>0$, there exists $R_{2}>0$ such
that
\[
I_{V}(T_{\lambda,y}^{R}z_{\lambda,y}^{R})<c_{0}+\delta
\]
for $\lambda=1$ and every $y\in\partial B_{2}(y_{0})$ and $R\geq R_{2}.$ In
particular, $c_{V}\leq c_{0}.$
\end{lemma}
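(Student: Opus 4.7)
The plan is to exploit the fact that, when $\lambda=1$, we have $z_{1,y}^{R}=\omega_{0}^{R}$, which is independent of $y$ and is simply a translate of the ground state $\omega$ of the limit problem (\ref{pinf}). Set $T^{R}:=T_{1,y}^{R}$. Since $T^{R}\omega_{0}^{R}\in\mathcal{N}_{V}$, Lemma \ref{lem:nehari}(d) gives
\[
I_{V}(T^{R}\omega_{0}^{R})=\max_{t>0}I_{V}(t\omega_{0}^{R}).
\]
Splitting $I_{V}(t\omega_{0}^{R})=I_{0}(t\omega_{0}^{R})+\tfrac{t^{2}}{2}\int_{\mathbb{R}^{N}}V(\omega_{0}^{R})^{2}$ and using that $\omega_{0}^{R}$ is a ground state of $I_{0}$, so that $\max_{t>0}I_{0}(t\omega_{0}^{R})=I_{0}(\omega)=c_{0}$, I would deduce
\[
I_{V}(T^{R}\omega_{0}^{R})\leq c_{0}+\tfrac{(T^{R})^{2}}{2}\int_{\mathbb{R}^{N}}V(\omega_{0}^{R})^{2}\leq c_{0}+\tfrac{T_{0}^{2}}{2}\int_{\mathbb{R}^{N}}V^{+}(\omega_{0}^{R})^{2},
\]
where the bound $T^{R}\leq T_{0}$ comes from Lemma \ref{projection}.

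It then remains to show that $\int_{\mathbb{R}^{N}}V^{+}(\omega_{0}^{R})^{2}\to0$ as $R\to\infty$. This is the single-bump version of (the computation in) Lemma \ref{lem:V}. Assumption $(V2)$ together with the decay estimate (\ref{decay}) gives
\[
\int_{\mathbb{R}^{N}}V^{+}(\omega_{0}^{R})^{2}\leq C\int_{\mathbb{R}^{N}}\frac{\mathrm{d}x}{(1+|x|)^{\kappa}(1+|x-Ry_{0}|)^{2(N-2)}},
\]
and Lemma \ref{power}(a), applied with centers $0$ and $y_{0}$ (distinct, since $|y_{0}|=1$) and exponents $\alpha=\kappa$, $\beta=2(N-2)$, produces
\[
\int_{\mathbb{R}^{N}}V^{+}(\omega_{0}^{R})^{2}\leq CR^{-\mu}\qquad\text{with }\mu:=\min\{\kappa,2(N-2),\kappa+N-4\}.
\]
The hypothesis $\kappa>\max\{2,N-2\}$ in $(V2)$ and $N\geq3$ ensure $\mu>0$.

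Combining these estimates yields $I_{V}(T^{R}\omega_{0}^{R})\leq c_{0}+C'R^{-\mu}$, so for any prescribed $\delta>0$ we can choose $R_{2}$ large enough to make the right-hand side smaller than $c_{0}+\delta$; since the estimate does not depend on $y\in\partial B_{2}(y_{0})$, the statement follows. For the final assertion, observe that $T^{R}\omega_{0}^{R}\in\mathcal{N}_{V}$, hence $c_{V}\leq I_{V}(T^{R}\omega_{0}^{R})<c_{0}+\delta$ for every $\delta>0$, giving $c_{V}\leq c_{0}$. The only step requiring any care is checking that $\mu>0$ from Lemma \ref{power}(a) under the precise growth assumption $(V2)$; everything else is a direct application of previously established lemmas.
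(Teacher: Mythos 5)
Your proof is correct and follows essentially the same route as the paper: use Lemma \ref{lem:nehari}(d) to reduce the $I_{0}$-part to $c_{0}$, bound $T_{1,y}^{R}$ uniformly via Lemma \ref{projection}, and show the potential term is $O(R^{-\mu})$ with $\mu>0$ from $(V2)$, the decay estimates (\ref{decay}) and Lemma \ref{power}(a) (the paper simply quotes Lemma \ref{lem:V}, whose two-bump estimate dominates your single-bump one). The only discrepancy is notational: with the printed definition of $z_{\lambda,y}^{R}$ you correctly get $z_{1,y}^{R}=\omega_{0}^{R}$ whereas the paper's proof writes $\omega_{y}^{R}$, but either translate escapes to infinity as $R\to\infty$, so the argument is unaffected.
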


\begin{proof}
By Lemma \ref{projection}, $T_{\lambda,y}^{R}$ is bounded uniformly in
$\lambda,y$ and $R$. So, from Lemmas \ref{lem:nehari}(d) and \ref{lem:V}, we
obtain that%
\[
I_{V}(T_{1,y}^{R}z_{1,y}^{R})=I_{0}(T_{1,y}^{R}\omega_{y}^{R})+(T_{1,y}%
^{R})^{2}\int_{\mathbb{R}^{N}}V\left(  \omega_{y}^{R}\right)  ^{2}\leq
c_{0}+o_{R}(1),
\]
where $o_{R}(1)\rightarrow0$ as $R\rightarrow\infty,$ uniformly in
$y\in\partial B_{2}(y_{0}),$ and the claim is proved.
\end{proof}

For $c\in\mathbb{R},$ set%
\[
I_{V}^{c}:=\{u\in D^{1,2}(\mathbb{R}^{N}):I_{V}(u)\leq c\}.
\]

\begin{lemma}
[Deformation]\label{lem:deformation}If $c_{V}$ is not attained by $I_{V}$ on
$\mathcal{N}_{V},$ then $c_{V}=c_{0}.$ If, moreover, $I_{V}$ does not have a
critical value in $(c_{0},2c_{0})$ then, for any given $\delta,\eta\in
(0,\frac{c_{0}}{4}),$ there exists a continuous function%
\[
\pi:\mathcal{N}_{V}\cap I_{V}^{2c_{0}-\eta}\rightarrow\mathcal{N}_{V}\cap
I_{V}^{c_{0}+\delta}%
\]
such that $\pi(u)=u$ for all $u\in\mathcal{N}_{V}\cap I_{V}^{c_{0}+\delta}.$
\end{lemma}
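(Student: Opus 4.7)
The first assertion follows by combining two earlier results. Corollary \ref{cor:compactness}(a) gives $c_V\geq c_0$ under the non-attainment hypothesis, while Lemma \ref{energy_on_boundary}, by producing elements of $\mathcal{N}_V$ with energy less than $c_0+\delta$ for every $\delta>0$, yields $c_V\leq c_0$. Hence $c_V=c_0$.

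For the deformation claim, my plan is to run a negative pseudo-gradient flow for $I_V$ restricted to the $\mathcal{C}^1$-manifold $\mathcal{N}_V$, confined to the strip $c_0+\delta\leq I_V\leq 2c_0-\eta$. Since $\delta,\eta\in(0,c_0/4)$, this strip is contained in $(c_0,2c_0)$, where $I_V$ has no critical value. The first step is to prove a uniform lower bound
\[
\|\nabla_{\mathcal{N}_V} I_V(u)\|_V \geq \alpha > 0
\qquad\text{on } \mathcal{N}_V\cap I_V^{-1}\!\bigl([c_0+\tfrac{\delta}{2},\,2c_0-\tfrac{\eta}{2}]\bigr).
\]
Any sequence along which this failed would, by Corollary \ref{cor:compactness}(b) specialized to $\sigma_k\equiv 0$ (so $\mathcal{M}_{\sigma_k}=\mathcal{N}_V$), admit a convergent subsequence whose limit would be a critical point of $I_V$ on $\mathcal{N}_V$ at a level in $[c_0+\tfrac{\delta}{2},\,2c_0-\tfrac{\eta}{2}]\subset(c_0,2c_0)$, contradicting our hypothesis.

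With this gradient bound in hand, the second step is to invoke the $\mathcal{C}^1$-manifold deformation lemma of Bonnet \cite{b}: build a locally Lipschitz pseudo-gradient vector field $W$ for $I_V|_{\mathcal{N}_V}$ via a partition of unity, cut it off to vanish outside a neighborhood of the strip, and consider the flow $\varphi(t,u)$ it generates. The estimate above ensures $\varphi$ is globally defined on $[0,\infty)\times\mathcal{N}_V$, non-increasing along $I_V$, and strictly decreasing at a rate bounded below by some $c_\alpha>0$ whenever $\varphi(t,u)$ sits in the strip. Consequently, each orbit starting in $\mathcal{N}_V\cap I_V^{2c_0-\eta}$ enters $I_V^{c_0+\delta}$ in time at most $T:=(c_0-\delta-\eta)/c_\alpha$. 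Setting
\[
\tau(u):=\inf\{t\geq 0:I_V(\varphi(t,u))\leq c_0+\delta\},\qquad \pi(u):=\varphi(\tau(u),u),
\]
delivers the required retraction: $\tau$ is continuous because $c_0+\delta$ is a regular value of $I_V|_{\mathcal{N}_V}$ and $I_V\circ\varphi(\cdot,u)$ crosses that value transversally, while $\pi(u)=u$ for $u\in\mathcal{N}_V\cap I_V^{c_0+\delta}$ by definition.

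The main obstacle is technical rather than conceptual: the flow must be built from the tangential gradient $\nabla_{\mathcal{N}_V}I_V$, not from $\nabla I_V$, so that it preserves the constraint $J_V=0$, and the hitting time $\tau$ must depend continuously on $u$. Both points are the content of Bonnet's lemma, which packages precisely these ingredients in the $\mathcal{C}^1$-submanifold setting; the uniform gradient bound established in the first step is the hypothesis it requires. Everything else is the standard quantitative deformation machinery.
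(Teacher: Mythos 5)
Your first paragraph reproduces the paper's argument for $c_V=c_0$ exactly, and you correctly identify Bonnet's deformation theorem for $\mathcal{C}^{1}$-manifolds as the engine for the second claim. The gap lies in how you verify its hypothesis. You establish the uniform lower bound on $\Vert\nabla_{\mathcal{N}_V}I_V\Vert_V$ only on $\mathcal{N}_V$ itself, by specializing Corollary \ref{cor:compactness}(b) to $\sigma_k\equiv 0$, and you then assert that a pseudo-gradient flow built from the tangential gradient ``preserves the constraint $J_V=0$.'' In the $\mathcal{C}^{1}$ setting this is precisely the point that fails: since $J_V$ is only $\mathcal{C}^{1}$, the tangent projection $\nabla_{\mathcal{N}_V}I_V$ is merely continuous, so the flow must be generated by a locally Lipschitz pseudo-gradient field that is not exactly tangent, and its trajectories drift off $\mathcal{N}_V$ into the nearby level sets $\mathcal{M}_\sigma=J_V^{-1}(\sigma)$. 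For this reason Bonnet's Theorem 2.5 requires the Palais--Smale-type condition for sequences $u_k\in\mathcal{M}_{\sigma_k}$ with $\sigma_k\to 0$ (together with control of $\nabla J_V(u_k)$, which must stay bounded away from $0$ so that the nearby level sets remain manifolds transverse to the flow), not merely for sequences on $\mathcal{M}_0=\mathcal{N}_V$. This is exactly why the paper proves Lemma \ref{lem:aproxPS}, Lemma \ref{lem:PS} and Corollary \ref{cor:compactness}(b) for general $\sigma_k\to 0$: together they show that if $u_k\in\mathcal{M}_{\sigma_k}$, $\sigma_k\to 0$, $I_V(u_k)\to d\in(c_0,2c_0)$ and either $\nabla_{\mathcal{M}_{\sigma_k}}I_V(u_k)\to 0$ or $\nabla J_V(u_k)\to 0$, then $(u_k)$ subconverges. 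Your argument as written omits the verification on the nearby level sets, so the flow construction is not justified.

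Once the hypothesis is checked in this stronger form, the rest of your outline (exit time, retraction, the role of $\delta,\eta\in(0,\frac{c_0}{4})$) goes through; the paper instead applies Bonnet's theorem at each level $d\in(c_0,2c_0)$ to obtain local deformations satisfying (i)--(iii) and assembles $\pi$ from them, rather than exhibiting one global flow, but that difference is cosmetic.
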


\begin{proof}
If $c_{V}$ is not attained, Corollary \ref{cor:compactness}(a)\ and Lemma
\ref{energy_on_boundary}\ imply that $c_{V}=c_{0}.$

Recall that $\mathcal{N}_{V}$ is a $\mathcal{C}^{1}$-manifold. From Lemma
\ref{lem:aproxPS} and Corollary \ref{cor:compactness}(b) we have that the
following statement is true: If $\sigma_{k}\in\mathbb{R}$ and $u_{k}%
\in\mathcal{M}_{\sigma_{k}}$ are such that $\sigma_{k}\rightarrow0,$
$I_{V}(u_{k})\rightarrow d\in(c_{0},2c_{0})$ and, either $\nabla
_{\mathcal{M}_{\sigma_{k}}}I_{V}(u_{k})\rightarrow0$ or $\nabla J_{V}%
(u_{k})\rightarrow0$, then $(u_{k})$ contains a convergent subsequence. (In
fact, Lemma \ref{lem:aproxPS} says that $(\nabla J_{V}(u_{k}))$ must be
bounded away from $0$). This allows us to apply Theorem 2.5 in \cite{b} to
conclude that there exists $\hat{\varepsilon}>0$ such that, for each
$\varepsilon\in(0,\hat{\varepsilon}),$ there exists a homeomorphism
$\phi:\mathcal{N}_{V}\rightarrow\mathcal{N}_{V}$ such that

\begin{enumerate}
\item[(i)] $\phi(u)=u$ if $I_{V}(u)\notin\lbrack d-\varepsilon,d+\varepsilon
],$

\item[(ii)] $I_{V}(\phi(u))\leq I_{V}(u)$ for all $u\in\mathcal{N}_{V},$

\item[(iii)] $I_{V}(\phi(u))\leq d-\varepsilon$ for all $u\in\mathcal{N}_{V}$
with $I_{V}(u)\leq d+\varepsilon.$
\end{enumerate}

Our claim follows easily from this fact.
\end{proof}

Let $\mathfrak{b}:L^{2^{\ast}}(\mathbb{R}^{N})\smallsetminus\{0\}\rightarrow
\mathbb{R}^{N}$ be a barycenter map, i.e., a continuous map such that%
\begin{equation}
\mathfrak{b}(u(\cdot-y))=\mathfrak{b}(u)+y\text{\qquad and\qquad}%
\mathfrak{b}(u\circ\Theta^{-1})=\Theta(\mathfrak{b}(u)) \label{bar}%
\end{equation}
for all $u\in L^{2^{\ast}}(\mathbb{R}^{N})\smallsetminus\{0\}$ and
$y\in\mathbb{R}^{N},$ and every linear isometry $\Theta$ of $\mathbb{R}^{N}$;
see \cite{bw,cp}. Note that $\mathfrak{b}(u)=0$ if $u$ is radial.

\begin{lemma}
\label{lem:bar}If $c_{V}$ is not attained by $I_{V}$ on $\mathcal{N}_{V},$
then there exists $\delta>0$ such that
\[
\mathfrak{b}(u)\neq0\qquad\forall u\in\mathcal{N}_{V}\cap I_{V}^{c_{0}+\delta
}.
\]

\end{lemma}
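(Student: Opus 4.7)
The plan is to argue by contradiction. Suppose no such $\delta>0$ exists; then one can extract a sequence $(u_{k})\subset\mathcal{N}_{V}$ with $I_{V}(u_{k})\to c_{V}$ and $\mathfrak{b}(u_{k})=0$ for every $k$. By Lemma~\ref{lem:deformation} we have $c_{V}=c_{0}$, so $(u_{k})$ is a minimizing sequence for $I_{V}$ on the Nehari manifold.

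Since Lemma~\ref{lem:nehari}(b) makes $\mathcal{N}_{V}$ a $\mathcal{C}^{1}$-natural constraint for $I_{V}$, Ekeland's variational principle yields a nearby sequence $(v_{k})\subset\mathcal{N}_{V}$ with $\Vert v_{k}-u_{k}\Vert\to 0$, $I_{V}(v_{k})\to c_{V}$, and $\nabla_{\mathcal{N}_{V}}I_{V}(v_{k})\to 0$. Lemmas~\ref{lem:aproxPS} and~\ref{lem:PS} then promote $(v_{k})$ to a bounded $(PS)_{c_{V}}$-sequence for $I_{V}$, so the splitting lemma (Lemma~\ref{lem:split}) applies and produces a decomposition
\[
v_{k}\;=\;u\;+\;\sum_{i=1}^{m}w_{i}(\cdot-y_{i,k})\;+\;o(1)\quad\text{in }D^{1,2}(\mathbb{R}^{N}),
\]
with $|y_{i,k}|\to\infty$, $u$ a solution of (\ref{prob}), each $w_{i}$ a nontrivial solution of (\ref{pinf}), and $c_{0}=I_{V}(u)+\sum_{i}I_{0}(w_{i})$.

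The main step of the plan is to show that this decomposition collapses to a single bubble equal to (a translate of) $\pm\omega$. Since any nontrivial solution of (\ref{prob}) lies in $\mathcal{N}_{V}$ and so has energy $\geq c_{V}=c_{0}$, each $I_{0}(w_{i})\geq c_{0}$, and $c_{V}$ is not attained by hypothesis, the energy identity forces $u=0$, $m=1$, and $I_{0}(w_{1})=c_{0}$. Running the proof of Lemma~\ref{lem:sign} at the limit level (which is exactly the content of Lemma~\ref{nonexistence}) shows that $w_{1}$ does not change sign, and Proposition~\ref{prop:limit_problem} then identifies $w_{1}=\pm\omega$ after absorbing a residual translation into the sequence $(y_{1,k})$. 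This combinatorial bookkeeping inside the splitting decomposition is the part I expect to be the main technical obstacle, although it is essentially a routine energy count once the splitting lemma is in hand.

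Finally, the barycenter closes the argument. Since $\Vert u_{k}-v_{k}\Vert\to 0$ and $D^{1,2}(\mathbb{R}^{N})\hookrightarrow L^{2^{\ast}}(\mathbb{R}^{N})$, and since translation is an isometry of $L^{2^{\ast}}(\mathbb{R}^{N})$, the translated functions $u_{k}(\cdot+y_{1,k})$ converge to $\pm\omega$ in $L^{2^{\ast}}(\mathbb{R}^{N})$. Using the continuity of $\mathfrak{b}$, the equivariance property (\ref{bar}), and the radial symmetry of $\omega$, we get
\[
-y_{1,k}\;=\;\mathfrak{b}(u_{k})-y_{1,k}\;=\;\mathfrak{b}\bigl(u_{k}(\cdot+y_{1,k})\bigr)\;\longrightarrow\;\mathfrak{b}(\pm\omega)\;=\;0,
\]
which contradicts $|y_{1,k}|\to\infty$ and finishes the plan.
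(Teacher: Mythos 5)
Your argument is correct and is essentially the same as the paper's, which simply defers to Lemma 3.11 of \cite{cm}: contradiction via a zero-barycenter minimizing sequence, Ekeland plus Lemmas \ref{lem:aproxPS} and \ref{lem:PS} to get a bounded $(PS)_{c_{0}}$-sequence, the splitting lemma together with the energy count and the non-attainment hypothesis to reduce to a single bubble $\pm\omega(\cdot-y_{1,k})$, and the equivariance of $\mathfrak{b}$ to contradict $|y_{1,k}|\to\infty$. The only nitpick is that the sign information on $w_{1}$ comes from Lemma \ref{lem:sign} with $V=0$ (since $I_{0}(w_{1})=c_{0}\in[c_{0},2c_{0})$) rather than from Lemma \ref{nonexistence}, which concerns the open interval $(c_{0},2c_{0})$; this does not affect the argument.
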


\begin{proof}
The proof is the same as that of Lemma 3.11 in \cite{cm}.
\end{proof}

\smallskip

\begin{proof}
[Proof of Theorem \ref{thm:main}]If $c_{V}$ is attained by $I_{V}$ at some
$u\in\mathcal{N}_{V},$ then $u$ is a nontrivial solution of problem
(\ref{prob}). So assume that $c_{V}$ is not attained. Then, by Lemma
\ref{lem:deformation}, $c_{V}=c_{0}$. We will show that $I_{V}$ has a critical
value in $(c_{0},2c_{0}).$

Lemma \ref{lem:bar} allows us to choose $\delta\in(0,\frac{c_{0}}{4})$ such
that
\[
\mathfrak{b}(u)\neq0\qquad\forall u\in\mathcal{N}_{V}\cap I_{V}^{c_{0}+\delta}%
\]
and, by Proposition \ref{prop:main}\ and Lemma \ref{energy_on_boundary},\ we
may choose $R\geq1$ and $\eta\in(0,\frac{c_{0}}{4})$\ such that%
\[
I_{V}(T_{\lambda,y}^{R}z_{\lambda,y}^{R})\leq\left\{
\begin{array}
[c]{ll}%
2c_{0}-\eta & \text{for all }\lambda\in\lbrack0,1]\text{ and all }y\in\partial
B_{2}(y_{0}),\\
c_{0}+\delta & \text{for }\lambda=1\text{ and all }y\in\partial B_{2}(y_{0}).
\end{array}
\right.
\]
Define $\iota:B_{2}(y_{0})\rightarrow\mathcal{N}_{V}\cap I_{V}^{2c_{0}-\eta}$
by%
\[
\iota((1-\lambda)y_{0}+\lambda y):=T_{\lambda,y}^{R}z_{\lambda,y}%
^{R},\text{\qquad with \ }\lambda\in\lbrack0,1],\text{ }y\in\partial
B_{2}(y_{0}).
\]
Arguing by contradiction, assume that $I_{V}$ does not have a critical value
in $(c_{0},2c_{0}).$ Then, by Lemma \ref{lem:deformation}, there exists a
continuous function%
\[
\pi:\mathcal{N}_{V}\cap I_{V}^{2c_{0}-\eta}\rightarrow\mathcal{N}_{V}\cap
I_{V}^{c_{0}+\delta}%
\]
such that $\pi(u)=u$ for all $u\in\mathcal{N}_{V}\cap I_{V}^{c_{0}+\delta}.$
The function $\psi:B_{2}(y_{0})\rightarrow\partial B_{2}(y_{0})$ given by%
\[
\psi(x):=2\frac{\left(  \mathfrak{b}\circ\pi\circ\iota\right)  (x)}{\left\vert
\left(  \mathfrak{b}\circ\pi\circ\iota\right)  (x)\right\vert }%
\]
is well defined and continuous, and $\psi(y)=y$ for every $y\in\partial
B_{2}(y_{0}).$ This is a contradiction. Therefore, $I_{V}$ must have a
critical point $u\in\mathcal{N}_{V}$ with $I_{V}(u)\in(c_{0},2c_{0}).$

By Lemma \ref{lem:sign}, $u$ does not change sign and, since $f$ is odd, $-u$
is also a solution of (\ref{prob}). This proves that problem (\ref{prob}) has
a positive solution.
\end{proof}

\bigskip

\end{document}